\theoremstyle{plain}
\newtheorem{theorem}{Theorem}[section]
\newtheorem{proposition}{Proposition}[section]
\newtheorem{corollary}{Corollary}[section]
\newtheorem{remark}{Remark}[section]
\newtheorem{lemma}{Lemma}[section]
\newtheorem{example}{Example}[section]
\newtheorem{definition}{Definition}[section]
\title[Complete $\lambda$-hypersurfaces ]
{Complete $\lambda$-hypersurfaces of  weighted volume-preserving mean curvature flow }
\author{Qing-Ming Cheng and  Guoxin Wei}
\address{Qing-Ming Cheng \\ Department of Applied Mathematics, Faculty of Sciences ,
Fukuoka  University, 814-0180, Fukuoka,  Japan, cheng@fukuoka-u.ac.jp}
\address{Guoxin Wei \\  School of Mathematical Sciences, South China Normal University,
510631, Guangzhou,  China, weiguoxin@tsinghua.org.cn}
\begin{document}
\maketitle

\begin{abstract}
\noindent In this paper,  we introduce a  definition of  $\lambda$-hypersurfaces of weighted
volume-preserving mean curvature flow in Euclidean space. We prove  that $\lambda$-hypersurfaces
are critical points of the weighted area functional for the weighted volume-preserving variations.
Furthermore, we  classify  complete $\lambda$-hypersurfaces with polynomial area growth and
$H-\lambda\geq 0$, which  are  generalizations of the results due to Huisken \cite{[H2]}, Colding-Minicozzi \cite{[CM]}.
We also define a $\mathcal{F}$-functional and study $\mathcal{F}$-stability of $\lambda$-hypersurfaces, which
extend a result of  Colding-Minicozzi \cite{[CM]}. Lower bound growth and upper bound growth of the area for
complete and non-compact $\lambda$-hypersurfaces are also studied.

\end{abstract}

\footnotetext{ 2001 \textit{ Mathematics Subject Classification}: 53C44, 53C42.}

\footnotetext{{\it Key words and phrases}: the weighted volume-preserving mean curvature flow, the weighted area functional,
$\mathcal{F}$-stability, weak stability, $\lambda$-hypersurfaces.}

\footnotetext{The first author was partially  supported by JSPS Grant-in-Aid for Scientific Research (B): No. 24340013
and Challenging Exploratory Research No. 25610016.
The second author was partly supported by grant No. 11371150 of NSFC.}

\section {Introduction}

\noindent
Let $X: M\rightarrow \mathbb{R}^{n+1}$ be a smooth $n$-dimensional immersed hypersurface in the $(n+1)$-dimensional
Euclidean space $\mathbb{R}^{n+1}$.
A  family $X(\cdot, t)$ of smooth immersions:
$$
X(\cdot, t):M\to  \mathbb{R}^{n+1}
$$
with  $X(\cdot, 0)=X(\cdot)$ is called  a mean curvature flow
if  they satisfy
\begin{equation*}
\dfrac{\partial X(p,t)}{\partial t}=\mathbf{H}(p,t),
\end{equation*}
where $\mathbf{H}(t)=\mathbf{H}(p,t)$ denotes the mean curvature vector  of hypersurface  $M_t=X(M^n,t)$ at point $X(p,t)$.
Huisken \cite{[H1]} proved that the mean curvature flow $M_t$ remains smooth and convex
until it becomes extinct at a point in the finite time.   If we rescale the flow about the point, the rescaling
converges to the round sphere. An immersed hypersurface $X:M\to  \mathbb{R}^{n+1}$ is called {\it a self-shrinker}  if
\begin{equation*}
H+\langle X,N\rangle=0,
\end{equation*}
where $H$ and $N$ denote the mean curvature and the  unit normal vector of $X:M\to  \mathbb{R}^{n+1}$, respectively.
 $\langle \cdot , \cdot\rangle $ denotes the standard inner product in $\mathbb{R}^{n+1}$.
It is  known that self-shrinkers play an important role in the study of the mean curvature flow
because they describe all possible blow ups at a given singularity of the mean curvature flow.

\noindent
For $n=1$,  Abresch and Langer \cite{[AL]}  classified all smooth closed self-shrinker curves
in $\mathbb{R}^2$ and showed that the  round circle is the only embedded self-shrinker.
For $n\geq 2$, Huisken \cite{[H2]}  studied compact self-shrinkers.
He proved that if  $M$ is an $n$-dimensional compact self-shrinker
with non-negative mean curvature  in $\mathbb{R}^{n+1}$, then $X(M)=S^n(\sqrt{n})$.
In the remarkable paper  \cite{[CM]}, Colding and Minicozzi have classified complete self-shrinkers with non-negative mean
curvature  and polynomial area  growth (which is called polynomial volume growth in \cite{[CM]} and \cite{[H3]})
in $\mathbb{R}^{n+1}$.  We should remark that Huisken \cite{[H3]} proved the  same results if
the squared norm of the second fundamental form is bounded.  Colding and Minicozzi \cite{[CM]}
have introduced a notation of $\mathcal{F}$-functional
and computed the first and the second variation formulas
of the $\mathcal{F}$-functional. They have proved that  an immersed hypersurface $X:M\to  \mathbb{R}^{n+1}$ is
a self-shrinker if and only if it is a critical point of the $\mathcal{F}$-functional.  Furthermore,
they have given a complete classification of the $\mathcal{F}$-stable complete self-shrinkers
with polynomial area growth.

\noindent
On the other hand, Huisken \cite{[H4]} studied the volume-preserving mean curvature flow
\begin{equation*}
\dfrac{\partial X(t)}{\partial t}=-h(t) N(t) +\mathbf{H}(t),
\end{equation*}
where $X(t)=X(\cdot, t)$, $h(t)=\frac{\int_MH(t)d\mu_t}{\int_Md\mu_t}$  and $N(t)$ is the unit normal vector of $X(t):M\to  \mathbb{R}^{n+1}$.
He proved that if the initial hypersurface is uniformly convex, then the above volume-preserving mean curvature flow has a smooth
solution and it converges to a round sphere. Furthermore,  by making use of the Minkowski formulas, Guan and Li \cite{[GL]}
have studied the following type of mean curvature flow
\begin{equation*}
\dfrac{\partial X(t)}{\partial t}=-n N(t) +\mathbf{H}(t),
\end{equation*}
which is also a volume-preserving mean curvature flow. They have gotten that the flow converges to a solution of the isoperimetric problem
if the initial hypersurface is a smooth compact, star-shaped hypersurface.

\noindent
In this paper, we consider a new type of mean curvature flow:
\begin{equation}
\dfrac{\partial X(t)}{\partial t}=-\alpha(t) N(t) +\mathbf{H}(t),
\end{equation}
with
$$
\alpha(t) =\dfrac{\int_MH(t)\langle N(t), N\rangle e^{-\frac{|X|^2}2}d\mu}{\int_M\langle N(t), N\rangle e^{-\frac{|X|^2}2}d\mu},
$$
where  $N$ is the unit normal vector of $X:M\to  \mathbb{R}^{n+1}$.
We define {\it a weighted volume} of $M_t$ (see, section 2) by
$$
V(t)=\int_M\langle X(t), N\rangle e^{-\frac{|X|^2}{2}}d\mu.
$$
We can prove that the flow (1.1) preserves the weighted volume $V(t)$. Hence, we call the flow (1.1)  {\it a weighted  volume-preserving mean curvature flow}.

\noindent
The properties of solutions  of the  weighted  volume-preserving mean curvature flow (1.1)
will be studied in Cheng and Wei \cite{[CW1]}.

\noindent
This paper is organized as follows. In section 2,  we give a  definition of the weighted volume
and the first variation formula of the weighted area functional for all weighted volume-preserving
variations is given. As critical points of it, $\lambda$-hypersurface is defined. Self-similar solutions
of the weighted volume-preserving mean curvature  flow is considered. In section 3, the basic
properties of  $\lambda$-hypersurfaces are studied. In section 4, we give a classification for
compact  $\lambda$-hypersurfaces with $H-\lambda\geq 0$.  In sections 5 and 6, we define
$\mathcal F$-functional. The first and second variation formulas of $\mathcal F$-functional
are proved.  Notation of $\mathcal F$-stability and $\mathcal F$-unstability  of $\lambda$-hypersurfaces are
introduced. We prove that
spheres $S^n(r)$ with $r\leq \sqrt n$ or $r>\sqrt {n+1}$ are  $\mathcal F$-stable and
spheres $S^n(r)$ with $\sqrt n<r\leq \sqrt {n+1}$ are  $\mathcal F$-unstable.
In section 7, we study the  weak stability of  the weighted area functional for the weighted
volume-preserving variations. In section 8, a classification for
complete and non-compact   $\lambda$-hypersurfaces with polynomial area growth and
$H-\lambda\geq 0$ is given.  In sections 9 and 10, the area growth
of complete and non-compact $\lambda$-hypersurfaces are studied.
\vskip 2mm
\noindent
{\bf Acknowledgement}. A part of this work was finished when the first author
visited to Beijing Normal University. We would like to express our gratitude to
Professor Tang Zizhou and Dr. Yan Wenjiao for warm hospitality.

\vskip 5mm

\section{The first variation formula and $\lambda$-hypersurfaces}
\noindent
Let $X: M^n\rightarrow\mathbb{R}^{n+1}$ be an
$n$-dimensional connected hypersurface of the $(n+1)$-dimensional Euclidean space
$\mathbb{R}^{n+1}$. We choose a local orthonormal frame field
$\{e_A\}_{A=1}^{n+1}$ in $\mathbb{R}^{n+1}$ with dual coframe field
$\{\omega_A\}_{A=1}^{n+1}$, such that, restricted to $M^n$,
$e_1,\cdots, e_n$ are tangent to $M^n$.
Then we have
\begin{equation*}
dX=\sum_i\limits \omega_i e_i, \quad de_i=\sum_j\limits \omega_{ij}e_j+\omega_{i n+1}e_{n+1}
\end{equation*}
and
\begin{equation*}
de_{n+1}=\sum_i\limits\omega_{n+1 i}e_i.
\end{equation*}
We restrict these forms to $M^n$, then
$$
\omega_{n+1}=0,\ \ \omega_{n+1i}=-\sum_{j=1}^nh_{ij}\omega_j,\ \ h_{ij}=h_{ji},
$$
where $h_{ij}$ denotes the component of the second fundamental form of $X: M^n\rightarrow\mathbb{R}^{n+1}$.
$\mathbf{H}=\sum_{j=1}^nh_{jj}e_{n+1}$ is the mean curvature vector field, $H=|\mathbf{H}|=\sum_{j=1}^nh_{jj}$
is the mean curvature and $II=\sum_{i,j}h_{ij}\omega_i\otimes\omega_je_{n+1}$ is the second fundamental form
of $X: M^n\rightarrow\mathbb{R}^{n+1}$.
Let
$$
f_{,i}=\nabla_i f,  \  f_{,ij}=\nabla_j\nabla_if, \  \ h_{ijk}=\nabla_kh_{ij} \  \ {\rm and} \ \ h_{ijkl}=\nabla_l\nabla_kh_{ij},
$$
where
$\nabla_j $ is the covariant differentiation operator.The Gauss equations and Codazzi equations are given by
\begin{equation}\label{eq:12-6-4}
R_{ijkl}=h_{ik}h_{jl}-h_{il}h_{jk},
\end{equation}
\begin{equation}\label{eq:12-6-5}
h_{ijk}=h_{ikj},
\end{equation}
where $R_{ijkl}$ and $h_{ijk}$ denote  components of curvature tensor and components of  the covariant derivative of $h_{ij}$,
respectively.  Furthermore,  we have the Ricci formula:
\begin{equation}\label{eq:12-6-6}
h_{ijkl}-h_{ijlk}=\sum_{m=1}^nh_{im}R_{mjkl}+\sum_{m=1}^nh_{mj}R_{mikl}.
\end{equation}
For a constant vector $a\in \mathbb{R}^{n+1}$, one has
$$
\langle X,a\rangle _{,i}=\langle e_i,a\rangle ,\ \ \langle N,a\rangle _{,i}=-\sum_jh_{ij}\langle e_j,a\rangle,
$$

$$
\langle X,a\rangle _{,ij}=h_{ij}\langle N,a\rangle ,
$$

$$
 \langle N,a\rangle _{,ij}=-\sum_kh_{ijk}\langle e_k,a\rangle -\sum_kh_{ik}h_{jk}\langle N,a\rangle.
$$

\noindent
We call $X(t)$ is a variation of $X$ if $X(t):M\rightarrow \mathbb{R}^{n+1}$, $t\in (-\varepsilon,\varepsilon)$ is a family of  immersions
with  $X(0)=X$. For $X_0\in \mathbb{R}^{n+1}$ and a real number $t_0 $, we define {\it a weighted area function}
$A:(-\varepsilon,\varepsilon)\rightarrow\mathbb{R}$ by
$$
A(t)=\int_Me^{-\frac{|X(t)-X_0|^2}{2t_0}}d\mu_t,
$$
where $d\mu_t$ is the area element of $M$ in the metric induced by $X(t)$.
The {\it weighted volume function} $V:(-\varepsilon,\varepsilon)\rightarrow \mathbb{R}$ is defined by
$$
V(t)=\int_M\langle X(t)-X_0,N\rangle e^{-\frac{|X-X_0|^2}{2t_0}}d\mu.
$$
In this paper, we only consider compactly supported variations. By a direct calculations, we have the following first variation formulas of $A(t)$ and $V(t)$:

\begin{lemma}\label{lemma 100}
\begin{equation}
\frac{d A(t)}{dt}=\int_M \biggl(-\frac{\langle X(t)-X_0,\frac{\partial X(t)}{\partial t}\rangle }{t_0}
-H(t)\langle \frac{\partial X(t)}{\partial t}, N(t)\rangle \biggl)e^{-\frac{|X(t)-X_0|^2}{2t_0}}d\mu_t,
\end{equation}

\begin{equation}
\frac{d V(t)}{dt}=\int_M \langle \frac{\partial X(t)}{\partial t},N\rangle e^{-\frac{|X-X_0|^2}{2t_0}}d\mu.
\end{equation}
\end{lemma}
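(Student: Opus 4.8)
The plan is to differentiate both functionals under the integral sign; since the variations are compactly supported this is unproblematic. For $A(t)$ the work is to see how the Gaussian weight $e^{-|X(t)-X_0|^2/2t_0}$ and the area element $d\mu_t$ each vary in $t$; for $V(t)$ it is simpler, only the factor $\langle X(t)-X_0,N\rangle$ of its integrand depending on $t$. Following the usual convention for first-variation formulas I would carry out the $A(t)$ computation for a normal variation $\frac{\partial X(t)}{\partial t}=\phi(\cdot,t)\,N(\cdot,t)$ --- tangential directions merely reparametrise $M$ and contribute nothing to either first variation --- and indicate at the end what a general variation adds.

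For the weight the chain rule gives
\[
\frac{\partial}{\partial t}\,e^{-\frac{|X(t)-X_0|^2}{2t_0}}
=-\frac{1}{t_0}\,\Bigl\langle X(t)-X_0,\ \frac{\partial X(t)}{\partial t}\Bigr\rangle\,e^{-\frac{|X(t)-X_0|^2}{2t_0}} .
\]
For the area element I would invoke the classical first variation of area: writing $g_{ij}(t)=\langle\partial_iX(t),\partial_jX(t)\rangle$ for the induced metric, one has $\frac{\partial}{\partial t}g_{ij}=\langle\partial_i\frac{\partial X}{\partial t},\partial_jX\rangle+\langle\partial_iX,\partial_j\frac{\partial X}{\partial t}\rangle$, and since $\frac{\partial X}{\partial t}=\phi N$ and $\langle\partial_iN,\partial_jX\rangle=-h_{ij}$ --- obtained by differentiating $\langle N,\partial_jX\rangle=0$, or directly from the structure equation $\omega_{n+1\,i}=-\sum_jh_{ij}\omega_j$ recorded above --- this gives $\frac{\partial}{\partial t}g_{ij}=-2\phi\,h_{ij}$ and hence
\[
\frac{\partial}{\partial t}\,d\mu_t=\frac12\,g^{ij}\,\frac{\partial}{\partial t}g_{ij}\;d\mu_t=-\phi\,g^{ij}h_{ij}\,d\mu_t=-\phi H\,d\mu_t=-H\,\Bigl\langle\frac{\partial X(t)}{\partial t},N(t)\Bigr\rangle\,d\mu_t .
\]
Adding these two contributions inside $\int_M$ produces at once the asserted formula for $\frac{dA(t)}{dt}$.

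The formula for $\frac{dV(t)}{dt}$ requires none of this machinery: in $V(t)=\int_M\langle X(t)-X_0,N\rangle\,e^{-|X-X_0|^2/2t_0}\,d\mu$ the weight and the area element $d\mu$ are those of the fixed initial immersion $X=X(0)$, $N=N(0)$, so only the factor $\langle X(t)-X_0,N\rangle$ depends on $t$, and differentiating it gives $\frac{dV(t)}{dt}=\int_M\langle\frac{\partial X(t)}{\partial t},N\rangle\,e^{-|X-X_0|^2/2t_0}\,d\mu$, as claimed.

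I do not expect any genuine obstacle: the whole argument is the chain rule together with the standard first variation of area, the only point needing care being the sign in $\langle\partial_iN,\partial_jX\rangle=-h_{ij}$ (equivalently $\bar\nabla_{e_i}N=-\sum_jh_{ij}e_j$), which must be taken consistently from the structure equations above. For a general compactly supported variation, $\frac{\partial}{\partial t}d\mu_t=\operatorname{div}_M\bigl(\frac{\partial X}{\partial t}\bigr)d\mu_t$ carries a tangential-divergence term in addition to the $-H\langle\frac{\partial X}{\partial t},N\rangle$ appearing above; one integration by parts against the Gaussian weight --- where the compact support is precisely what makes the boundary term vanish --- cancels it against the corresponding tangential piece of the weight's derivative, so that only the normal components survive and one recovers the same formula.
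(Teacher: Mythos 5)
Your computation is correct and is precisely the ``direct calculation'' the paper invokes without writing out: the chain rule applied to the Gaussian weight together with the classical first variation of area $\partial_t\, d\mu_t=-H\langle \partial_t X,N\rangle\, d\mu_t$ for normal variations, while for $V(t)$ only the factor $\langle X(t)-X_0,N\rangle$ depends on $t$. Your closing caveat is also the right one to make: as literally written the formula for $A'(t)$ is the normal-variation formula (which is the paper's standing assumption); for a variation with a tangential part, the integration by parts you describe reduces $-\langle X-X_0,\partial_t X\rangle/t_0$ to its normal component as well, so the resulting expression involves only $\langle \partial_t X,N\rangle$ rather than reproducing the displayed integrand verbatim.
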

\noindent
Let $\frac{\partial X(t)}{\partial t}=W(t)$. Then the vector field $\frac{\partial X(t)}{\partial t}|_{t=0}=W(0)=W$ is called
{\it a variation vector field}. Set $f(t)=\langle W(t),N(t)\rangle $, where $N(t)$ is the normal vector of $M_t$, $N(0)=N$.
In this paper, we only consider the normal variation vector field, which can be expressed as $\frac{\partial X(t)}{\partial t}|_{t=0}=fN$.
We say a variation of $X$ is  {\it a weighted volume-preserving variation } if $V(t)=V(0)$ for all $t$, that is
\begin{equation}
\aligned
0&=\frac{d V(t)}{dt}=\int_M \langle \frac{\partial X(t)}{\partial t},N\rangle e^{-\frac{|X-X_0|^2}{2t_0}}d\mu\\
 &=\int_Mf(t)\langle N(t),N\rangle e^{-\frac{|X-X_0|^2}{2t_0}}d\mu.
\endaligned
\end{equation}
We can prove the following lemma using the same method as that of the lemma 2.4 of \cite{[BC]}.

\begin{lemma}\label{lemma 101}
Given a smooth function $f:M\rightarrow \mathbb{R}$ with $\int_M f e^{-\frac{|X-X_0|^2}{2t_0}}d\mu=0$,
there exists a weighted volume-preserving normal variation such that  its variation vector field is $fN$.
\end{lemma}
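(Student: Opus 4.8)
The plan is to imitate the construction in Barbosa--do Carmo (Lemma~2.4 of \cite{[BC]}): first build an auxiliary \emph{two}-parameter family of immersions, and then cut it down to a one-parameter family via the implicit function theorem so that the weighted volume is pinned to its initial value.

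Concretely, assuming (as is implicit here, since only compactly supported variations are considered) that $f$ has compact support or that $M$ is compact, I would set
$$
\bar X(t,s)=X+\bigl(tf+s\bigr)N,\qquad (t,s)\in(-\varepsilon,\varepsilon)^2,
$$
where $N=N(0)$ is the fixed unit normal of $X$. Since $tf+s$ is $C^1$-small on the support of $f$, the map $\bar X(t,\cdot)$ is an immersion for $\varepsilon$ small, $\bar X(0,0)=X$, and, using $\langle N,N\rangle=1$,
$$
\widehat V(t,s):=\int_M\langle \bar X(t,s)-X_0,N\rangle e^{-\frac{|X-X_0|^2}{2t_0}}d\mu
=V(0)+\int_M(tf+s)\,e^{-\frac{|X-X_0|^2}{2t_0}}d\mu .
$$
Hence $\widehat V$ is smooth in $(t,s)$, $\widehat V(0,0)=V(0)$, and $\partial_s\widehat V(0,0)=\int_M e^{-\frac{|X-X_0|^2}{2t_0}}d\mu>0$.

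By the implicit function theorem there is a smooth function $s=s(t)$, defined for $|t|$ small, with $s(0)=0$ and $\widehat V(t,s(t))\equiv V(0)$; then $X(t):=\bar X(t,s(t))$ is a normal variation of $X$ with constant weighted volume, i.e.\ a weighted volume-preserving variation. To identify its variation vector field, differentiate $\widehat V(t,s(t))\equiv V(0)$ at $t=0$:
$$
0=\partial_t\widehat V(0,0)+\partial_s\widehat V(0,0)\,s'(0)
=\int_M f\,e^{-\frac{|X-X_0|^2}{2t_0}}d\mu+\Bigl(\int_M e^{-\frac{|X-X_0|^2}{2t_0}}d\mu\Bigr)s'(0).
$$
Since $\int_M f\,e^{-\frac{|X-X_0|^2}{2t_0}}d\mu=0$ by hypothesis and the coefficient of $s'(0)$ is positive, we get $s'(0)=0$, whence $\frac{\partial X(t)}{\partial t}\big|_{t=0}=(f+s'(0))N=fN$, as required.

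The argument is essentially mechanical; the only points that need attention are the openness of the immersion condition (this is where compact support of $f$, or compactness of $M$, enters) and the identity $s'(0)=0$, which is precisely where the hypothesis $\int_M f\,e^{-|X-X_0|^2/(2t_0)}d\mu=0$ is used. I would also remark that, because the weight $e^{-|X-X_0|^2/(2t_0)}$, the normal $N$, and the area element $d\mu$ appearing in $V(t)$ are those of the fixed immersion $X=X(0)$, one in fact has $\widehat V(t,0)\equiv V(0)$ already, so $s(t)\equiv 0$ works and the linear variation $X(t)=X+tfN$ is itself weighted volume-preserving; the implicit function theorem step is genuinely needed only if one prefers a notion of weighted volume built from the time-$t$ geometry.
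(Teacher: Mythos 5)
Your proof is correct and is exactly the Barbosa--do Carmo argument that the paper invokes (the paper gives no details, only the reference to Lemma 2.4 of \cite{[BC]}). Your closing observation is also right: because $V(t)$ is built from the fixed normal $N$, weight and area element of $X(0)$, the map $t\mapsto V(X+tfN)$ is affine in $t$ with slope $\int_M f\,e^{-|X-X_0|^2/(2t_0)}d\mu=0$, so the linear variation already preserves $V$ and the implicit function theorem step is not actually needed in this setting.
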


\noindent
Let
$$
\lambda=\frac{1}{A}\int_M(\langle \frac{X-X_0}{t_0},N\rangle +H)e^{-\frac{|X-X_0|^2}{2t_0}}d\mu,
$$
with
$$
A=\int_Me^{-\frac{|X-X_0|^2}{2t_0}}d\mu
$$
and define
$J:(-\varepsilon,\varepsilon)\rightarrow\mathbb{R}$ by
$$
J(t)=A(t)+\lambda V(t),
$$
for constant $\lambda$.
Then, one has
\begin{proposition}
Let $X:M\rightarrow \mathbb{R}^{n+1}$ be an immersion. The following statements are equivalent with each other:

\begin{enumerate}
\item $\langle \frac{X-X_0}{t_0},N\rangle +H=\lambda$.
\item  For all weighted volume-preserving variations, $A^{'}(0)=0$.
\item  For all arbitrary variations, $J^{'}(0)=0$.
\end{enumerate}
\end{proposition}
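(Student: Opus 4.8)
The plan is to express the three first derivatives $A'(0)$, $V'(0)$, $J'(0)$ in terms of the single function
\[
\phi := \Big\langle \frac{X-X_0}{t_0},\,N\Big\rangle + H
\]
and then read off the equivalences. Since all variations considered are normal, write $\frac{\partial X(t)}{\partial t}\big|_{t=0}=fN$ with $f=\langle W,N\rangle$, and abbreviate $d\nu := e^{-|X-X_0|^2/(2t_0)}\,d\mu$. Plugging $W=fN$ into the two formulas of Lemma \ref{lemma 100} gives
\[
A'(0) = -\int_M f\,\phi\;d\nu, \qquad V'(0) = \int_M f\;d\nu ,
\]
and therefore, as $\lambda$ is a constant and $J(t)=A(t)+\lambda V(t)$,
\[
J'(0) = -\int_M f\,(\phi-\lambda)\;d\nu .
\]
Note that the definition of $\lambda$ is exactly the statement that $\lambda$ is the $d\nu$-average of $\phi$, i.e. $\int_M (\phi-\lambda)\,d\nu = 0$; this is the normalization that makes the cycle below close up.

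I would then prove $(1)\Rightarrow(3)\Rightarrow(2)\Rightarrow(1)$. The first two implications are immediate. If $\phi\equiv\lambda$, the integrand defining $J'(0)$ vanishes identically, so $J'(0)=0$ for every variation, giving $(1)\Rightarrow(3)$. If (3) holds and the variation is weighted volume-preserving, then $V(t)\equiv V(0)$, so $V'(0)=0$, whence $A'(0)=J'(0)-\lambda V'(0)=J'(0)=0$; this is $(3)\Rightarrow(2)$.

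The real content is $(2)\Rightarrow(1)$. Given a smooth, compactly supported $f$ with $\int_M f\,d\nu=0$, Lemma \ref{lemma 101} produces a weighted volume-preserving normal variation whose variation vector field is $fN$; applying hypothesis (2) to it yields $\int_M f\,\phi\;d\nu=0$. Hence $\int_M f\,(\phi-c)\,d\nu=0$ for every constant $c$ and every admissible mean-zero $f$, which forces $\phi$ to be constant. When $M$ is compact one takes $f=\phi-\lambda$ directly (it has zero $d\nu$-mean by the normalization above) and subtracts $\int_M(\phi-\lambda)\lambda\,d\nu=0$ to get $\int_M(\phi-\lambda)^2\,d\nu=0$, so $\phi\equiv\lambda$. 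In general one argues by contradiction with bump functions: if $\phi(p)<\phi(q)$, choose $c$ strictly between these values and disjoint neighbourhoods $U\ni p$, $V\ni q$ with $\phi<c$ on $U$ and $\phi>c$ on $V$; combining nonnegative bumps supported in $U$ and in $V$ into an $f$ with $\int_M f\,d\nu=0$ makes $f(\phi-c)\le 0$ everywhere and negative somewhere, contradicting $\int_M f(\phi-c)\,d\nu=0$. Either way $\phi$ is constant, and its $d\nu$-average is $\lambda$, so $\phi\equiv\lambda$, which is (1).

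The only delicate step is this last implication, and within it the genuine input is the existence statement of Lemma \ref{lemma 101}; the remaining work — substituting $W=fN$ into Lemma \ref{lemma 100} with the correct signs, observing that $\phi$ is smooth, and keeping all variations compactly supported as stipulated — is routine formula manipulation. I do not anticipate a serious obstacle.
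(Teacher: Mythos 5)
Your proposal is correct and follows essentially the same route as the paper: the cycle $(1)\Rightarrow(3)\Rightarrow(2)\Rightarrow(1)$, with the only substantive step being $(2)\Rightarrow(1)$ via Lemma \ref{lemma 101} and a localization-by-bump-functions contradiction. The paper's test function is $f=(\varphi+\psi)\bigl(\langle\frac{X-X_0}{t_0},N\rangle+H-\lambda\bigr)$, which yields a strictly positive integral of a square, while you use a signed combination of bumps against an intermediate constant $c$; these are minor variants of the same argument.
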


\begin{proof} From Lemma \ref{lemma 100}, we have (1)$\Rightarrow$ (3) and (3)$\Rightarrow$ (2).
We next prove (2)$\Rightarrow$ (1).
Assume that at a point $p\in M$, we have $(\langle \frac{X-X_0}{t_0},N\rangle +H-\lambda)(p)\neq 0$.
We can assume that
$(\langle \frac{X-X_0}{t_0},N\rangle +H-\lambda)(p)> 0$. Let
$$
M^{+}=\{q\in M: (\langle \frac{X-X_0}{t_0},N\rangle +H-\lambda)(p)> 0\},
$$
$$
M^{-}=\{q\in M: (\langle \frac{X-X_0}{t_0},N\rangle +H-\lambda)(p)< 0\}.
$$
Let $\varphi$ and $\psi$ be non-negative real  smooth functions on $M$ such that
$$
p\in {\rm supp}\varphi\subset M^{+}, \ \ \ {\rm supp}\psi\subset M^{-},
$$
and
$$
\int_M(\varphi+\psi)(\langle \frac{X-X_0}{t_0},N\rangle +H-\lambda) e^{-\frac{|X-X_0|^2}{2t_0}}d\mu=0.
$$
Since $\int_M(\langle \frac{X-X_0}{t_0},N\rangle +H-\lambda) e^{-\frac{|X-X_0|^2}{2t_0}}d\mu=0$, we know that such a choice is possible.
Let $f=(\varphi+\psi)(\langle \frac{X-X_0}{t_0},N\rangle +H-\lambda)$, then $\int_M fe^{-\frac{|X-X_0|^2}{2t_0}}d\mu=0$.
By Lemma \ref{lemma 101}, we get a weighted volume-preserving variation such that its variation vector field is $fN$.
From our  assumption,
$$
A^{'}(0)=\int_M (-\frac{\langle X-X_0,N\rangle }{t_0}-H)fe^{-\frac{|X-X_0|^2}{2t_0}}d\mu=0.
$$
Hence, we have
\begin{equation}
\aligned
0&=\int_Mf(\langle \frac{X-X_0}{t_0},N\rangle +H-\lambda)e^{-\frac{|X-X_0|^2}{2t_0}}d\mu\\
 &=\int_M(\varphi+\psi)(\langle \frac{X-X_0}{t_0},N\rangle +H-\lambda)^2e^{-\frac{|X-X_0|^2}{2t_0}}d\mu\\
 &> 0.
\endaligned
\end{equation}
It is a contradiction. It follows that $\langle \frac{X-X_0}{t_0},N\rangle +H=\lambda$.
\end{proof}

\begin{definition}
Let $X: M\rightarrow \mathbb{R}^{n+1}$ be an $n$-dimensional immersed hypersurface in the Euclidean space
$\mathbb{R}^{n+1}$. If $\langle \frac{X-X_0}{t_0},N\rangle +H=\lambda$ holds, we call
 $X: M\rightarrow \mathbb{R}^{n+1}$ {\it a $\lambda$-hypersurface of the weighted volume-preserving mean curvature flow}.
\end{definition}
\begin{remark} If  $\lambda=0$,  then the $\lambda$-hypersurface is a self-shrinker of the mean curvature
flow. Hence, we know that the notation of the $\lambda$-hypersurface is a generalization of the self-shrinker.
\end{remark}
\begin{theorem}
Let $X:M\rightarrow \mathbb{R}^{n+1}$ be an immersed hypersurface.
The following statements are equivalent with each other:

\begin{enumerate}
\item $X:M\rightarrow \mathbb{R}^{n+1}$ is a $\lambda$-hypersurface.
\item  $X:M\rightarrow \mathbb{R}^{n+1}$ is a critical point of the weighted area functional $A(t)$ for  all weighted volume-preserving variations.
\item  $X:M\rightarrow \mathbb{R}^{n+1}$ is a hypersurface with constant weighted mean curvature $H_w=\lambda$  in $\mathbb {R}^{n+1}$
with respect to the metric $g_{AB}=e^{-\frac{|X-X_0|^2}{nt_0}}\delta_{AB}$, where the weighted mean curvature and the mean curvature $H$ are related
by  $H_w =e^{-\frac{|X-X_0|^2}{2nt_0}}H$.
\end{enumerate}

\end{theorem}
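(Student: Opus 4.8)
The plan is to dispose of the equivalence (1)$\Leftrightarrow$(2) by quoting Proposition 2.1, since its statements (1) and (2) are literally the statements (1) and (2) of the theorem; no new argument is needed there. All the content lies in the equivalence with (3), which I would establish by a direct computation of the mean curvature of $M$ with respect to the conformally flat metric $\bar g_{AB}=e^{2\phi}\delta_{AB}$ with $\phi=-\frac{|X-X_0|^2}{2nt_0}$, so that $e^{2\phi}$ is exactly the stated conformal factor.

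First I would recall the effect of a conformal change $\bar g=e^{2\phi}g$ on the Levi--Civita connection, $\bar\nabla_UV=\nabla_UV+\langle U,\nabla\phi\rangle V+\langle V,\nabla\phi\rangle U-\langle U,V\rangle\nabla\phi$, where $\nabla$ and $\langle\cdot,\cdot\rangle$ are the Euclidean connection and inner product. The Euclidean unit normal $N$ of $M$ becomes the $\bar g$-unit normal $\bar N=e^{-\phi}N$, so inserting a local tangent orthonormal frame $e_i,e_j$ and using $e_i,e_j\perp N$ gives the $\bar g$-second fundamental form $\bar h_{ij}=\bar g(\bar\nabla_{e_i}e_j,\bar N)=e^{\phi}\bigl(h_{ij}-\delta_{ij}\langle\nabla\phi,N\rangle\bigr)$, where $h_{ij}$ is as in the excerpt ($\nabla_{e_j}N=-\sum_i h_{ij}e_i$). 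The metric induced on $M$ by $\bar g$ is $e^{2\phi}\delta_{ij}$ in this frame, so tracing yields $\bar H=e^{-\phi}\bigl(H-n\langle\nabla\phi,N\rangle\bigr)$ for the mean curvature of $M$ in $\bar g$.

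Next I would substitute the specific $\phi$. On $\mathbb{R}^{n+1}$ one has $\nabla\phi=-\frac{1}{nt_0}(X-X_0)$, hence $\langle\nabla\phi,N\rangle=-\frac{1}{nt_0}\langle X-X_0,N\rangle$ and therefore $\bar H=e^{-\phi}\bigl(H+\langle\frac{X-X_0}{t_0},N\rangle\bigr)$. Since $e^{-\phi}=e^{\frac{|X-X_0|^2}{2nt_0}}$, the weighted mean curvature $H_w:=e^{-\frac{|X-X_0|^2}{2nt_0}}\bar H$ is identically equal to $H+\langle\frac{X-X_0}{t_0},N\rangle$; thus $H_w\equiv\lambda$ if and only if $X$ is a $\lambda$-hypersurface, which is the equivalence (1)$\Leftrightarrow$(3). (If one only posits that $H_w$ is a constant, integrating the identity against the weight $e^{-|X-X_0|^2/(2t_0)}d\mu$ and dividing by $A$ forces that constant to be $\lambda$, so the two phrasings coincide.)

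The computation is routine; the only real care needed is bookkeeping of conventions — the orientation of $N$, the sign of $h_{ij}$ fixed in the excerpt, and the fact that the symbol "$H$" in statement (3) denotes the mean curvature of $M$ taken with respect to $\bar g$ (our $\bar H$), not the Euclidean one. I would also emphasize why the particular weight $e^{-|X-X_0|^2/(nt_0)}\delta_{AB}$ is the right choice: the $\tfrac1{2n}$ in $\phi$ is calibrated so that the factor $n$ produced by the trace cancels and the cross term lands exactly on $\langle\frac{X-X_0}{t_0},N\rangle$. This calibration is the one point where a careless power would break the argument, so it is worth isolating explicitly.
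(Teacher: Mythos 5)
Your proposal is correct, and in fact it supplies an argument where the paper gives none: Theorem 2.1 is stated immediately after Proposition 2.1 and Definition 2.1 with no proof at all, the equivalence (1)$\Leftrightarrow$(2) being exactly Proposition 2.1 read through Definition 2.1 (as you say), and the equivalence with (3) being left tacit. Your conformal computation checks out against the paper's sign conventions: with $h_{ij}=\langle\nabla_{e_i}e_j,N\rangle$ and $\phi=-\frac{|X-X_0|^2}{2nt_0}$ one indeed gets $\bar H=e^{-\phi}\bigl(H+\langle\frac{X-X_0}{t_0},N\rangle\bigr)$, so $e^{\phi}\bar H=H+\langle\frac{X-X_0}{t_0},N\rangle$ and constancy of $H_w$ is equivalent to the $\lambda$-hypersurface equation. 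Your interpretive caveat is also the right one: the relation $H_w=e^{-\frac{|X-X_0|^2}{2nt_0}}H$ in statement (3) only makes the theorem true if the symbol $H$ there denotes the mean curvature computed in the conformal metric $g_{AB}$, not the Euclidean one, and the remark that a constant value of $H_w$ is forced to equal $\lambda$ by weighted averaging (using the paper's definition of $\lambda$) correctly reconciles the two phrasings, at least when the relevant integrals converge. There is nothing in the paper's own treatment to compare your route against, and no gap in what you propose.
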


\begin{example}
The $n$-dimensional sphere $S^n(r)$ with radius $r>0$ is a compact $\lambda$-hypersurface in $\mathbb{R}^{n+1}$
with $\lambda=\frac{n}r-r$. It  should be  remarked  that the sphere $S^n(\sqrt n)$ is the only self-shrinker sphere in $\mathbb{R}^{n+1}$.
\end{example}
\begin{example}
For $1\leq k\leq n-1$, the $n$-dimensional cylinder $S^k(r)\times \mathbb{R}^{n-k}$ with radius $r>0$
 is a complete and non-compact $\lambda$-hypersurface
in $\mathbb{R}^{n+1}$ with $\lambda=\frac{k}r-r$. We should notice that the cylinder $S^k(\sqrt k)\times \mathbb{R}^{n-k}$ is the only self-shrinker
cylinder in $\mathbb{R}^{n+1}$.
\end{example}

\noindent  From \cite{[C]}, Chang has proved there exist a lot of complete embedded $\lambda$-curves $\Gamma$ in $\mathbb{R}^{2}$. Hence we have

\begin{example}
The $n$-dimensional hypersurfaces $\Gamma \times \mathbb{R}^{n-1}$ are  complete embedded $\lambda$-hypersurfaces, which are not self-shrinkers, in $\mathbb{R}^{n+1}$. 
\end{example}

\begin{remark}
For $1$-dimensional self-shrinker in $\mathbb{R}^{2}$, Abresch and Langer \cite{[AL]} proved the circle is 
the only compact embedded self-shrinker. But for $\lambda$-curve in $\mathbb{R}^{2}$, Chang \cite{[C]} has proved, for  $\lambda<0$, there are many compact embedded $\lambda$-curves other than the circle. From the above examples, we know that there are 
a lot of  examples of complete embedded $\lambda$-hypersurfaces, which are not self-shrinkers,  in $\mathbb{R}^{n+1}$.
\end{remark}

\begin{proposition} \label{prop 2.2}Let $X: M\rightarrow \mathbb{R}^{n+1}$ be a $\lambda$-hypersurface in the Euclidean space
$\mathbb{R}^{n+1}$. If the mean curvature $H$ is constant, then $X: M\rightarrow \mathbb{R}^{n+1}$
is isometric to   $S^k(r)\times \mathbb{R}^{n-k}$, $0\leq k\leq n$, locally.
\end{proposition}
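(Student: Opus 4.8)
The plan is to squeeze out of the hypothesis that $M$ is isoparametric and then to invoke the classical Euclidean classification of such hypersurfaces. First I would translate by $X\mapsto X-X_{0}$, which changes neither $N$, $H$, nor the defining equation, and so assume $X_{0}=0$; then $\frac1{t_{0}}\langle X,N\rangle+H=\lambda$, and since $H$ is constant the support function $c:=\langle X,N\rangle=t_{0}(\lambda-H)$ is a constant. Using the structure equations of Section 2 one computes $\nabla_{i}\langle X,N\rangle=-\sum_{j}h_{ij}\langle X,e_{j}\rangle$, so that $A(X^{\top})=0$, where $X^{\top}$ is the tangential part of $X$; and, writing $u:=\tfrac12|X|^{2}$, one finds $\nabla u=X^{\top}$ and $\nabla_{V}(\nabla u)=V+c\,A(V)$ for every tangent vector $V$. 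Differentiating $A(X^{\top})=0$ and using the latter identity yields the Simons-type relation
\begin{equation*}
(\nabla_{V}A)(X^{\top})=-A(V)-c\,A^{2}(V)\qquad\text{for all }V .
\end{equation*}
Tracing it, and using the total symmetry of $\nabla A$ (Codazzi together with $h_{ij}=h_{ji}$) and $\nabla H=0$, one obtains $H+c\,|A|^{2}=0$; by that same total symmetry the relation rearranges to $\nabla_{\nabla u}A=-(A+c\,A^{2})$, whence for each $j\ge1$
\begin{equation*}
(\nabla u)\bigl(\operatorname{tr}(A^{j})\bigr)=-j\,\operatorname{tr}\bigl(A^{j-1}(A+c\,A^{2})\bigr)=-j\bigl(\operatorname{tr}(A^{j})+c\,\operatorname{tr}(A^{j+1})\bigr).
\end{equation*}

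Next I would split into cases according to $c$. Assume first $c\neq0$; this is automatic when $\lambda\neq0$, because $c=0$ would force $H=\lambda$ and then $H=0$ by $H+c|A|^{2}=0$. Starting from the constancy of $\operatorname{tr}(A)=H$, and noting that $(\nabla u)(\operatorname{tr}(A^{j}))=0$ once $\operatorname{tr}(A^{j})$ is constant, the displayed recursion gives $\operatorname{tr}(A^{j+1})=-\tfrac1c\operatorname{tr}(A^{j})$, so by induction every power sum $\operatorname{tr}(A^{j})$ is constant (in fact $\operatorname{tr}(A^{j})=H(-1/c)^{j-1}$). Hence all elementary symmetric functions of the principal curvatures are constant, i.e.\ $M$ is isoparametric. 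By the classical classification of isoparametric (equivalently, constant–principal–curvature) hypersurfaces of $\mathbb{R}^{n+1}$, $M$ is then locally a hyperplane (the case $k=0$), a sphere $S^{n}(r)$ (the case $k=n$), or a cylinder $S^{k}(r)\times\mathbb{R}^{n-k}$ with $1\le k\le n-1$; comparing with $H+c|A|^{2}=0$ one identifies $r=|c|$.

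It remains to handle $c=0$, i.e.\ $H=\lambda$; then $H=0$ and $X^{\top}=X$, so the position vector is everywhere tangent to $M$, which is therefore a minimal cone with vertex $0$. A complete smooth minimal cone in $\mathbb{R}^{n+1}$ is a hyperplane — the case $k=0$ — so if $M$ is complete the proof is finished. I expect this degenerate case (which forces $\lambda=0$, $H=0$) to be the only place where a global hypothesis is genuinely needed: without it, $M$ could be an open piece of a non-flat minimal cone such as a Clifford or Simons cone, which is a $\lambda$-hypersurface with constant mean curvature that is not locally a product, so the statement as phrased is understood with completeness in force. The hard part in the main case $c\neq0$ is the passage to ``$M$ is isoparametric'', and the recursion above reduces it to the two elementary facts $H=\mathrm{const}$ and $H+c|A|^{2}=0$; after that the classical Euclidean classification does the rest.
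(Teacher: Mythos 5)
Your treatment of the main case $c=\langle X,N\rangle\neq 0$ is correct and reaches the conclusion by a genuinely different mechanism than the paper's. The paper fixes a principal frame, uses $\nabla\langle X,N\rangle=0$ to kill the components of $X$ along the nonzero principal directions, and then differentiates the resulting expression for $X$ to conclude $\lambda_{i_0}(H-\lambda)=1$ for every nonzero principal curvature; you instead differentiate $A(X^{\top})=0$ covariantly, obtain the tensorial identity $\nabla_{X^{\top}}A=-(A+cA^{2})$, and run the power-sum recursion to make every $\mathrm{tr}(A^{j})$ constant. Your trace identity $H+c|A|^{2}=0$ is exactly the identity $\mathcal{L}H=H+S(\lambda-H)$ from Section 4 specialized to constant $H$, and both arguments terminate in the Levi-Civita--Segre classification of isoparametric hypersurfaces of $\mathbb{R}^{n+1}$. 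What your version buys is rigor: the paper's display $e_{i_0}=\nabla_{i_0}X=-\langle X,N\rangle\lambda_{i_0}e_{i_0}$ silently discards the connection terms $\sum_{j\neq i_0}\langle X,e_j\rangle\langle\overline{\nabla}_{i_0}e_j,e_{i_0}\rangle$, which need not vanish, whereas your computation is frame-free and does not encounter that issue.

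Your caveat about the degenerate case $c\equiv 0$ is also on target and is the one point where the proposition as literally stated fails. There $H=\lambda=0$ and $M$ is an open piece of a minimal cone; non-flat minimal cones (the cone over the Clifford torus in $\mathbb{R}^{4}$, the Simons cone, and so on) are $\lambda$-hypersurfaces with constant $H\equiv 0$ that are not locally isometric to any $S^{k}(r)\times\mathbb{R}^{n-k}$, and the paper's proof does not address this case at all. Under completeness your argument does close, but you should make the last step explicit: the rays through the origin are geodesics of the cone of finite length, so completeness forces the vertex to lie on $M$, and smoothness of the hypersurface at the vertex of a cone forces the cone to be a hyperplane. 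With that sentence added, your proof is complete, and it correctly isolates the only configuration in which a global hypothesis is genuinely needed.
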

\begin{proof}
Since  $X: M\rightarrow \mathbb{R}^{n+1}$ is a $\lambda$-hypersurface, we have
$\langle X, N\rangle +H=\lambda$. If $H$ is constant, we get, for any $1\leq i\leq n$,
$$
\nabla_i\langle X, N\rangle=-\lambda_i\langle X, e_i\rangle=0,
$$
where $\lambda_i$ is the principal curvature of the $\lambda$-hypersurface. If $\lambda_{i_0}\neq 0$ at a point $p$ for some
$i_0$, there exists a neighborhood $U$ of $p$ such that $\lambda_{i_0}\neq 0$ in $U$.
Hence, we know $\langle X, e_{i_0}\rangle=0$ in $U$.
Thus,
$$
X=\sum_{j\neq i_0}\langle X, e_{j}\rangle e_j+\langle X,N\rangle N.
$$
We obtain
$$
e_{i_0}=\nabla_{i_0}X=-\langle X,N\rangle \lambda_{i_0} e_{i_0},
$$
that is,
$\lambda_{i_0}(H-\lambda)=1$ is constant. Thus, on $U$, $\lambda_{i_0}$ is constant.
Therefore,  the $\lambda$-hypersurface is isoparametric.  We obtain that
$X: M\rightarrow \mathbb{R}^{n+1}$
is isometric to   $S^k(r)\times \mathbb{R}^{n-k}$, $0\leq k\leq n$, locally.
\end{proof}
\begin{definition}
A family of $n$-dimensional immersed hypersurfaces $X(t): M\rightarrow \mathbb{R}^{n+1}$ in the Euclidean space
$\mathbb{R}^{n+1}$
is called a self-similar solution
of the weighted volume-preserving mean curvature flow if $X(t)=\beta(t)X$ holds, where $\beta(t)>0$.
\end{definition}
\begin{proposition}
A family of $n$-dimensional immersed hypersurfaces $X(t): M\rightarrow \mathbb{R}^{n+1}$ in the Euclidean space
$\mathbb{R}^{n+1}$
is  a self-similar solution
of the weighted volume-preserving mean curvature flow if and only if   $X(t)=\sqrt{1+\beta_0t}X$, where $\beta_0$ is
a constant.
\end{proposition}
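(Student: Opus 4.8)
The plan is to substitute the homothetic ansatz $X(t)=\beta(t)X$ (with $\beta(0)=1$, which follows from $X(0)=X$) into the flow (1.1) and read off an ordinary differential equation for the scalar function $\beta$. The first step is to record how the geometry of $M_t=\beta(t)M$ is obtained from that of $M=M_0$ under the homothety $Y\mapsto\beta(t)Y$ of $\mathbb R^{n+1}$: a homothety preserves normal directions, so the unit normal is unchanged, $N(t)=N$, whence $\langle N(t),N\rangle\equiv 1$; the scalar mean curvature scales inversely with the dilation, $H(t)=\beta(t)^{-1}H$; and consequently the mean curvature vector satisfies $\mathbf H(t)=H(t)N(t)=\beta(t)^{-1}HN=\beta(t)^{-1}\mathbf H$. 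Since the weight $e^{-|X|^{2}/2}$ and the measure $d\mu$ occurring in the definition of $\alpha(t)$ are those of the fixed initial hypersurface, these scalings together with the cancellation of the factor $\langle N(t),N\rangle\equiv 1$ give
\[
\alpha(t)=\frac{\int_{M} H(t)\langle N(t),N\rangle e^{-|X|^{2}/2}\,d\mu}{\int_{M}\langle N(t),N\rangle e^{-|X|^{2}/2}\,d\mu}
=\frac{1}{\beta(t)}\,\frac{\int_{M} H\,e^{-|X|^{2}/2}\,d\mu}{\int_{M} e^{-|X|^{2}/2}\,d\mu}=:\frac{\alpha_{0}}{\beta(t)},
\]
where $\alpha_{0}$ is a constant determined by $M$.

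Now $\partial_{t}X(t)=\beta'(t)X$, so (1.1) becomes
\[
\beta'(t)\,X=-\alpha(t)N(t)+\mathbf H(t)=\frac{H-\alpha_{0}}{\beta(t)}\,N ,
\]
i.e.
\[
\beta(t)\beta'(t)\,X=(H-\alpha_{0})\,N \qquad\text{on }M,\ \text{for all admissible }t .
\]
The right-hand side does not depend on $t$. If $\beta'\equiv 0$ then $\beta\equiv\beta(0)=1$ is constant; otherwise choose $t_{1}$ with $\beta(t_{1})\beta'(t_{1})\neq 0$, so that $X=\bigl(\beta(t_{1})\beta'(t_{1})\bigr)^{-1}(H-\alpha_{0})N$ on $M$, and then for every $t$ one has $\beta(t)\beta'(t)X=(H-\alpha_{0})N=\beta(t_{1})\beta'(t_{1})X$; since $M$ is an immersed hypersurface $X$ does not vanish identically, so $\beta(t)\beta'(t)=\beta(t_{1})\beta'(t_{1})$ for all $t$. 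In either case $\beta(t)\beta'(t)\equiv\beta_{0}/2$ for some constant $\beta_{0}$, that is $\frac{d}{dt}\beta(t)^{2}=\beta_{0}$. Integrating with $\beta(0)=1$ yields $\beta(t)^{2}=1+\beta_{0}t$, and since $\beta>0$ we conclude $X(t)=\sqrt{1+\beta_{0}t}\,X$ on the interval where $1+\beta_{0}t>0$. Conversely, $X(t)=\sqrt{1+\beta_{0}t}\,X$ is of the required form $\beta(t)X$ with $\beta(t)=\sqrt{1+\beta_{0}t}>0$ there, hence is a self-similar solution; this establishes the equivalence.

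The only point that needs care — the ``hard part'' — is the bookkeeping of the scaling behaviour of $N$, $H$, $\mathbf H$ and $\alpha$ under the homothety, together with the observation that in $\beta'(t)X=\beta(t)^{-1}(H-\alpha_{0})N$ the entire time-dependence has collapsed into a single scalar prefactor multiplying the fixed vector field $(H-\alpha_{0})N$ on $M$. This is precisely what forces $\beta\beta'$ to be constant; everything afterward is a one-line integration of an ODE.
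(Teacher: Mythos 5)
Your overall route is the same as the paper's: substitute $X(t)=\beta(t)X$, use the homothety scalings $N(t)=N$ and $H(t)=H/\beta(t)$ to get $\alpha(t)=\alpha(0)/\beta(t)$, and extract the ODE $\frac{d}{dt}\beta(t)^2=\mathrm{const}$. The one step I would flag is how you read the flow equation (1.1). You equate the full vectors, $\beta'(t)X=-\alpha(t)N(t)+\mathbf{H}(t)$, and from this deduce (when $\beta'\not\equiv 0$) that $X=c\,(H-\alpha_0)N$ pointwise on $M$, i.e.\ that the position vector is everywhere normal. That intermediate conclusion is far too strong: it forces $|X|^2$ to be constant on $M$ and would exclude, for instance, the cylinders $S^k(r)\times\mathbb{R}^{n-k}$, which the paper explicitly has in mind as initial data for self-similar solutions in the proposition immediately following. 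The issue is that $\partial_t X(t)=\beta'(t)X$ has a nontrivial tangential component in general, while the right-hand side of (1.1) is purely normal; a homothetically moving family solves the flow only in the geometric sense that the \emph{normal} components agree, the tangential discrepancy being absorbed by a time-dependent reparametrization. The paper accordingly writes the equation as $\beta'(t)\,X^{\perp}=\beta(t)^{-1}\bigl(-\alpha(0)N+\mathbf{H}\bigr)$ rather than with $X$ itself on the left.

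The repair is one line: take the inner product of the (normal-projected) equation with $N$ to get $\beta(t)\beta'(t)\langle X,N\rangle=H-\alpha_0$ on $M$ for all $t$; since the right-hand side is $t$-independent, evaluating at any point where $\langle X,N\rangle\neq 0$ shows that $\beta\beta'$ is a constant $\beta_0/2$, and your integration $\beta(t)^2=1+\beta_0 t$ with $\beta(0)=1$ then goes through verbatim. (If $\langle X,N\rangle\equiv 0$ the equation degenerates and any $\beta$ works; the paper silently ignores this case as well.) Your converse --- ``$\sqrt{1+\beta_0 t}\,X$ is of homothetic form, hence a self-similar solution'' --- elides the requirement that the family actually solve (1.1), but the paper's ``the inverse is obvious'' is no more careful on this point, so I would not count it against you.
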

\begin{proof}
If $X(t): M\rightarrow \mathbb{R}^{n+1}$ is  a self-similar solution
of the weighted volume-preserving mean curvature flow, we have  $X(t)=\beta(t)X$.
Hence, the mean curvature $H(t)$ of $X(t)$ satisfies
$$
H(t)=\dfrac{H}{\beta(t)}.
$$
Thus,
$$
\alpha(t) =\dfrac{\int_MH(t)\langle N(t), N\rangle e^{-\frac{|X|^2}2}d\mu}{\int_M\langle N(t), N\rangle e^{-\frac{|X|^2}2}d\mu}
=\dfrac{\int_MH e^{-\frac{|X|^2}2}d\mu}{\beta(t)\int_M e^{-\frac{|X|^2}2}d\mu}.
$$
From the equation of the weighted volume-preserving mean curvature flow, we have
\begin{equation}\label{eq:2008}
\dfrac{\partial \beta(t)}{\partial t}X^{\perp}=\dfrac1{\beta(t)}\bigl(-\alpha(0) N +\mathbf{H}\bigl).
\end{equation}
We obtain
$
\dfrac{\partial \beta(t)^2}{\partial t}=\beta_0={\rm constant}.
$
Since $\beta(0)=1$, we have $\beta(t)=\sqrt{1+\beta_0t}$.

\noindent
The inverse is obvious.
\end{proof}

\begin{proposition} Let $X: M\rightarrow \mathbb{R}^{n+1}$ be a $\lambda$-hypersurface in the Euclidean space
$\mathbb{R}^{n+1}$.
If $X(t)=\sqrt{1+\beta_0t}X$ is  a self-similar solution of the weighted volume-preserving mean curvature flow,
then   $X: M\rightarrow \mathbb{R}^{n+1}$
is isometric to   $S^k(r)\times \mathbb{R}^{n-k}$, $0\leq k\leq n$, locally or
$V(0)=0$ and $\beta_0=-2$.
\end{proposition}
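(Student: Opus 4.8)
The plan is to extract a pointwise scalar equation from \eqref{eq:2008}, combine it with the $\lambda$-hypersurface equation, and then separate the cases $\beta_0\neq-2$ and $\beta_0=-2$. First I would observe that $\beta(t)=\sqrt{1+\beta_0 t}$ gives $\beta(t)^2=1+\beta_0 t$, hence $2\beta(t)\beta'(t)=\beta_0$; substituting $\beta'(t)=\frac{\beta_0}{2\beta(t)}$ into \eqref{eq:2008} and multiplying through by $\beta(t)$ yields $\frac{\beta_0}{2}X^{\perp}=-\alpha(0)N+\mathbf{H}$. Since $X^{\perp}=\langle X,N\rangle N$ and $\mathbf{H}=HN$ (because $\mathbf{H}=\sum_j h_{jj}e_{n+1}$, $H=\sum_j h_{jj}$ and $e_{n+1}=N$), taking the inner product with $N$ gives
$$\frac{\beta_0}{2}\langle X,N\rangle=H-\alpha(0).$$
Eliminating $H$ by means of the $\lambda$-hypersurface equation $\langle X,N\rangle+H=\lambda$ (here everything is taken with respect to the Gaussian weight $e^{-\frac{|X|^2}{2}}$, i.e. $X_0=0$, $t_0=1$), this becomes
$$\Bigl(\tfrac{\beta_0}{2}+1\Bigr)\langle X,N\rangle=\lambda-\alpha(0).$$

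Next I would identify the right-hand side. Since $\langle X,N\rangle+H\equiv\lambda$ is constant, $\lambda=\frac1A\int_M(\langle X,N\rangle+H)e^{-\frac{|X|^2}{2}}d\mu$ with $A=\int_M e^{-\frac{|X|^2}{2}}d\mu>0$, while $\alpha(0)=\frac1A\int_M H e^{-\frac{|X|^2}{2}}d\mu$; subtracting gives $\lambda-\alpha(0)=\frac1A\int_M\langle X,N\rangle e^{-\frac{|X|^2}{2}}d\mu=\frac{V(0)}{A}$. Hence the previous display reads $\bigl(\frac{\beta_0}{2}+1\bigr)\langle X,N\rangle=\frac{V(0)}{A}$, and a dichotomy appears. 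If $\beta_0\neq-2$, then $\langle X,N\rangle=\frac{V(0)}{A(\frac{\beta_0}{2}+1)}$ is constant on $M$, so $H=\lambda-\langle X,N\rangle$ is constant, and Proposition \ref{prop 2.2} yields that $X:M\to\mathbb{R}^{n+1}$ is locally isometric to some $S^k(r)\times\mathbb{R}^{n-k}$, $0\le k\le n$. If $\beta_0=-2$, the left-hand side of the last identity vanishes identically, forcing $V(0)=0$; together with $\beta_0=-2$ this is precisely the second alternative, and the two cases exhaust all possibilities.

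I do not expect a genuine obstacle: the argument is purely algebraic once \eqref{eq:2008} is available. The one step that must be noticed is the identity $\lambda-\alpha(0)=V(0)/A$, which is exactly what converts the coefficient dichotomy ``$\frac{\beta_0}{2}+1$ vanishes or not'' into the two stated conclusions. The only point of care is that the Gaussian integrals defining $A$, $\alpha(0)$, $\lambda$ and $V(0)$ be finite, so that the averaging used above is meaningful; this is part of the standing hypotheses under which the weighted volume-preserving flow is considered.
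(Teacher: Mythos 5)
Your proof is correct, but it is organized differently from the paper's. The paper first invokes the weighted-volume-preserving property of the flow: since $V(t)=\int_M\langle X(t),N\rangle e^{-|X|^2/2}d\mu=\sqrt{1+\beta_0t}\,V(0)$ must be constant in $t$, either $\beta_0=0$ or $V(0)=0$; in the first case \eqref{eq:2008} forces $\mathbf{H}=\alpha(0)N$, so $H$ is constant and Proposition \ref{prop 2.2} applies, while in the second case the paper asserts that \eqref{eq:2008} yields $\beta_0=-2$. You instead never use volume preservation: you reduce \eqref{eq:2008} to the pointwise identity $\bigl(\tfrac{\beta_0}{2}+1\bigr)\langle X,N\rangle=\lambda-\alpha(0)=V(0)/A$ and split on whether $\tfrac{\beta_0}{2}+1$ vanishes. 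Your dichotomy is in fact tighter at the final step: the paper's claim that ``\eqref{eq:2008} gives $\beta_0=-2$'' once $V(0)=0$ is only correct modulo the sub-case $\langle X,N\rangle\equiv 0$ (a hyperplane through the origin, i.e. the $k=0$ cylinder alternative), whereas your case split $\beta_0\neq-2$ versus $\beta_0=-2$ absorbs that sub-case automatically and makes the disjunction in the statement airtight. The trade-off is that your argument relies on the averaging identity $\lambda-\alpha(0)=V(0)/A$, hence on finiteness of the Gaussian integrals, which you correctly flag as part of the standing hypotheses (these integrals already appear in the definition of $\alpha(t)$ and $V$, so nothing extra is assumed). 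Both routes use \eqref{eq:2008} and Proposition \ref{prop 2.2} in the constant-$H$ case, so the essential ingredients coincide; only the mechanism producing the dichotomy differs.
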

\begin{proof}
Since  $X: M\rightarrow \mathbb{R}^{n+1}$ is a $\lambda$-hypersurface, we have
$\langle X, N\rangle +H=\lambda$
and
$$
V(t)=\int_M \langle X(t), N\rangle  e^{-\frac{|X|^2}2}d\mu=\sqrt{1+\beta_0t}V(0).
$$
Since $X(t)=\sqrt{1+\beta_0t}X$ is  a self-similar solution of the weighted volume-preserving mean curvature flow,
then $\beta_0=0$  or $V(0)=0$. If $\beta_0=0$, then $H$ is constant from (\ref{eq:2008}). According to the proposition \ref{prop 2.2},
we know that  $X: M\rightarrow \mathbb{R}^{n+1}$
is isometric to  $S^k(r)\times \mathbb{R}^{n-k}$, $0\leq k\leq n$, locally.
If $\beta_0\neq 0$, we have $V(0)=0$ since $V(t)$ is constant. The (\ref{eq:2008}) gives $\beta_0=-2$.

\end{proof}

\begin{definition}
If $X: M\rightarrow \mathbb{R}^{n+1}$ is an $n$-dimensional hypersurface in $\mathbb{R}^{n+1}$,
we say that $M$ has polynomial area growth if there exist constant $C$ and $d$ such that for all $r\geq 1$,
\begin{equation}
{\rm Area}(B_r(0)\cap X(M))=\int_{B_r(0)\cap X(M)}d\mu\leq Cr^d,
\end{equation}
where $B_r(0)$ is a standard ball in $\mathbb{R}^{n+1}$ with radius $r$ and centered at the origin.
\end{definition}
\vskip 5mm

\section{Properties of $\lambda$-hypersurfaces}
\noindent
In this section, we give several properties of $\lambda$-hypersurfaces.
We define an elliptic operator $\mathcal{L}$ by
\begin{equation}\label{eq:12-6-1}
\mathcal{L}f=\Delta f-\langle X,\nabla f\rangle,
\end{equation}
where $\Delta$ and $\nabla$ denote the Laplacian and the gradient operator of the $\lambda$-hypersurface, respectively.
We should notice that the $\mathcal{L}$ operator was introduced by Colding and Minicozzi in \cite{[CM]}
for self-shrinkers.

\noindent
By a direct calculation, for a constant
vector $a\in \mathbb{R}^{n+1}$, we have

\begin{equation*}
\aligned
 \mathcal{L}\langle X,a\rangle &=\Delta \langle X,a\rangle -\langle X,\nabla\langle X,a\rangle \rangle \\
 &=\sum_i\langle X,a\rangle _{,ii}-\sum_i\langle X,a\rangle _{,i}\langle X,e_i\rangle \\
&=\langle HN,a\rangle -\sum_i\langle e_i,a\rangle \langle X,e_i\rangle \\
&=\langle HN,a\rangle -\langle X,a\rangle +\langle X,N\rangle \langle N,a\rangle \\
&=\lambda\langle N,a\rangle -\langle X,a\rangle ,
\endaligned
\end{equation*}

\begin{equation*}
\aligned
 \mathcal{L}\langle N,a\rangle &=\sum_i\langle N,a\rangle _{,ii}-\sum_i\langle N,a\rangle _{,i}\langle X,e_i\rangle \\
&=\langle -H_{,i}e_i-SN,a\rangle +\sum_i\langle X,e_i\rangle \langle \sum_j h_{ij}e_j,a\rangle \\
&=\langle X,N\rangle _{,i}\langle e_i,a\rangle -\langle SN,a\rangle +\sum_i\langle X,e_i\rangle \langle \sum_j h_{ij}e_j,a\rangle \\
&=-S\langle N,a\rangle ,
\endaligned
\end{equation*}
where $S=\sum_{i,j}h_{ij}^2$ is the squared norm of the second fundamental form.

\begin{equation*}
\aligned
 \frac{1}{2}\mathcal{L}(|X|^2)&=\langle \Delta X,X\rangle +\sum_i\langle X_{,i},X_{,i}\rangle -\sum_i\langle X,e_i\rangle \langle X,e_i\rangle \\
&=n-|X|^2+\lambda\langle X,N\rangle.
\endaligned
\end{equation*}
Hence,  we have the following

\begin{lemma}\label{lemma 0}
If  $X: M\rightarrow \mathbb{R}^{n+1}$ is  a $\lambda$-hypersurface, then we have
\begin{equation}\label{eq:2001}
\aligned
 \mathcal{L}\langle X,a\rangle &=\lambda\langle N,a\rangle -\langle X,a\rangle,
\endaligned
\end{equation}

\begin{equation}\label{eq:2002}
\aligned
 \mathcal{L}\langle N,a\rangle &=-S\langle N,a\rangle,
\endaligned
\end{equation}
\begin{equation}\label{eq:2003}
\aligned
 \frac{1}{2}\mathcal{L}(|X|^2)&=n-|X|^2+\lambda\langle X,N\rangle .
\endaligned
\end{equation}
\end{lemma}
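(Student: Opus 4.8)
The plan is to apply the operator $\mathcal{L}f=\Delta f-\langle X,\nabla f\rangle$ directly to the three functions $f=\langle X,a\rangle$, $f=\langle N,a\rangle$ and $f=|X|^2$, using the pointwise first- and second-order derivative formulas for $\langle X,a\rangle$ and $\langle N,a\rangle$ recorded in Section~2 together with the defining relation $\langle X,N\rangle+H=\lambda$ of a $\lambda$-hypersurface (here $X_0=0$, $t_0=1$). In each case the strategy is the same: compute $\Delta f$ by tracing the Hessian formula, compute $\langle X,\nabla f\rangle$ by contracting the gradient with the tangential part of the position vector, and then use the constraint to collapse the result.

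For \eqref{eq:2001}: from $\langle X,a\rangle_{,ij}=h_{ij}\langle N,a\rangle$ one reads off $\Delta\langle X,a\rangle=H\langle N,a\rangle$, while $\langle X,\nabla\langle X,a\rangle\rangle=\sum_i\langle e_i,a\rangle\langle X,e_i\rangle$; writing $X=\sum_i\langle X,e_i\rangle e_i+\langle X,N\rangle N$ and pairing with $a$ turns the last sum into $\langle X,a\rangle-\langle X,N\rangle\langle N,a\rangle$, so that $H+\langle X,N\rangle=\lambda$ gives the claim. For \eqref{eq:2002}: tracing $\langle N,a\rangle_{,ij}=-\sum_kh_{ijk}\langle e_k,a\rangle-\sum_kh_{ik}h_{jk}\langle N,a\rangle$ yields $\Delta\langle N,a\rangle=-\sum_kH_{,k}\langle e_k,a\rangle-S\langle N,a\rangle$, since $\sum_ih_{iik}=H_{,k}$; and $\langle N,a\rangle_{,i}=-\sum_jh_{ij}\langle e_j,a\rangle$ gives $\langle X,\nabla\langle N,a\rangle\rangle=-\sum_{i,j}h_{ij}\langle X,e_i\rangle\langle e_j,a\rangle$. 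The decisive point is to differentiate the constraint: $\langle X,N\rangle_{,i}+H_{,i}=0$ combined with $\langle X,N\rangle_{,i}=-\sum_jh_{ij}\langle X,e_j\rangle$ gives $H_{,i}=\sum_jh_{ij}\langle X,e_j\rangle$, so the two terms linear in $h$ cancel exactly and only $-S\langle N,a\rangle$ remains. For \eqref{eq:2003}: use $\tfrac12\Delta|X|^2=\langle\Delta X,X\rangle+\sum_i|X_{,i}|^2=H\langle N,X\rangle+n$ (because $\Delta X=\mathbf{H}$ and $X_{,i}=e_i$) and $\tfrac12\langle X,\nabla|X|^2\rangle=\sum_i\langle X,e_i\rangle^2=|X|^2-\langle X,N\rangle^2$; subtracting and substituting $H+\langle X,N\rangle=\lambda$ finishes it.

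I do not expect any genuine obstacle: each identity is a short trace computation built entirely from the structure equations of Section~2. The only places that require care are the orthogonal decomposition of the position vector into its tangential and normal parts, the trace identity $\sum_ih_{iik}=H_{,k}$, and --- the feature that distinguishes the $\lambda$-hypersurface case from an arbitrary hypersurface --- the differentiated constraint $H_{,i}=\sum_jh_{ij}\langle X,e_j\rangle$, which is exactly what forces the cancellation producing $\mathcal{L}\langle N,a\rangle=-S\langle N,a\rangle$.
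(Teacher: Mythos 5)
Your proposal is correct and follows essentially the same route as the paper: trace the Hessian formulas for $\langle X,a\rangle$ and $\langle N,a\rangle$, decompose $X$ into tangential and normal parts, and use the differentiated constraint $H_{,i}=\sum_jh_{ij}\langle X,e_j\rangle$ to produce the cancellation in \eqref{eq:2002}. No gaps.
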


\noindent
The following lemma\label{lemma 1} due to Colding and Minicozzi \cite{[CM]} is needed in order to prove our results.
\begin{lemma}\label{lemma 1}
If $X: M\rightarrow \mathbb{R}^{n+1}$ is a hypersurface, $u$ is a $C^1$-function with compact support and
$v$ is a $C^2$-function, then
\begin{equation}
\int_M u(\mathcal{L}v)e^{-\frac{|X|^2}{2}}d\mu=-\int_M\langle \nabla u,\nabla v\rangle e^{-\frac{|X|^2}{2}}d\mu.
\end{equation}
\end{lemma}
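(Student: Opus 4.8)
The final statement to prove is Lemma 3.2 (the "integration by parts" lemma), which asserts that for a hypersurface $X:M\to\mathbb R^{n+1}$, a $C^1$-function $u$ with compact support, and a $C^2$-function $v$, one has $\int_M u(\mathcal Lv)e^{-|X|^2/2}d\mu=-\int_M\langle\nabla u,\nabla v\rangle e^{-|X|^2/2}d\mu$, where $\mathcal Lf=\Delta f-\langle X,\nabla f\rangle$.

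The plan is to recognize $\mathcal L$ as the drift Laplacian associated with the weight $e^{-|X|^2/2}$, i.e. to show that $\mathcal L$ is self-adjoint with respect to the measure $d\nu := e^{-|X|^2/2}d\mu$, and then the statement is just the divergence theorem. Concretely, first I would observe the pointwise identity
\begin{equation*}
\operatorname{div}\bigl(e^{-|X|^2/2}\,u\,\nabla v\bigr)=e^{-|X|^2/2}\Bigl(\langle\nabla u,\nabla v\rangle+u\,\Delta v+u\,\langle\nabla(-\tfrac{|X|^2}{2}),\nabla v\rangle\Bigr).
\end{equation*}
Next I would compute $\nabla(\tfrac12|X|^2)$ on $M$: since $\nabla_i(\tfrac12|X|^2)=\langle X,X_{,i}\rangle=\langle X,e_i\rangle$, the tangential gradient of $\tfrac12|X|^2$ is exactly the tangential part $X^\top$ of the position vector, so $\langle\nabla(-\tfrac12|X|^2),\nabla v\rangle=-\langle X^\top,\nabla v\rangle=-\langle X,\nabla v\rangle$ (the last equality because $\nabla v$ is tangent to $M$). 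Substituting this in gives
\begin{equation*}
\operatorname{div}\bigl(e^{-|X|^2/2}\,u\,\nabla v\bigr)=e^{-|X|^2/2}\bigl(\langle\nabla u,\nabla v\rangle+u\,\mathcal Lv\bigr).
\end{equation*}

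Finally I would integrate this identity over $M$. Since $u$ has compact support, $e^{-|X|^2/2}u\nabla v$ is a compactly supported $C^1$ vector field on $M$, so the divergence theorem on a manifold without boundary (applied on a domain containing the support) yields $\int_M\operatorname{div}(e^{-|X|^2/2}u\nabla v)\,d\mu=0$, and rearranging gives the claimed equality. I do not expect any serious obstacle here; the only point requiring a little care is making sure the compact support hypothesis is genuinely used so that the boundary term vanishes even when $M$ is noncompact, and correctly identifying that only the tangential component of $X$ enters (which is automatic since both $\nabla u$ and $\nabla v$ are tangent vector fields). This lemma is stated as being due to Colding–Minicozzi, so the argument is the standard one for the weighted Laplacian; I would present it in the three steps above.
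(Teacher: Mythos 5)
Your proof is correct and is exactly the standard argument: the paper itself gives no proof of this lemma, citing Colding--Minicozzi, and the cited proof is precisely your computation that $\operatorname{div}(e^{-|X|^2/2}u\nabla v)=e^{-|X|^2/2}(\langle\nabla u,\nabla v\rangle+u\mathcal{L}v)$ followed by the divergence theorem using the compact support of $u$. The identification $\nabla(\tfrac12|X|^2)=X^{\top}$ and the remark that only the tangential part of $X$ pairs with $\nabla v$ are handled correctly.
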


\begin{corollary}\label{corollary 1}
Let $X: M\rightarrow \mathbb{R}^{n+1}$ be a complete hypersurface. If $u$, $v$ are $C^2$ functions satisfying
\begin{equation}
\int_M(|u\nabla v|+|\nabla u||\nabla v|+|u\mathcal{L}v|)e^{-\frac{|X|^2}{2}}d\mu< +\infty,
\end{equation}
then we have
\begin{equation}
\int_M u(\mathcal{L}v)e^{-\frac{|X|^2}{2}}d\mu=-\int_M\langle \nabla u,\nabla v\rangle e^{-\frac{|X|^2}{2}}d\mu.
\end{equation}
\end{corollary}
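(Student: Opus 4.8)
The plan is to deduce Corollary~\ref{corollary 1} from Lemma~\ref{lemma 1} by a cut-off exhaustion argument, the three integrability hypotheses being exactly what is needed to pass to the limit. Fix $p\in M$ and let $\rho=d_M(p,\cdot)$ be the intrinsic distance function on $M$. Since the induced metric on $M$ is complete, Hopf--Rinow implies that $\rho$ is proper, so every sublevel set $\{\rho\le R\}$ is compact. Pick once and for all $\eta\in C^\infty(\mathbb{R})$ with $\eta\equiv1$ on $(-\infty,1]$, $\eta\equiv0$ on $[2,\infty)$, $0\le\eta\le1$ and $|\eta'|\le2$, and put $\phi_j=\eta(\rho/j)$. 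Then $\phi_j$ has compact support, $0\le\phi_j\le1$, $\phi_j\to1$ pointwise on $M$, and, since $\rho$ is $1$-Lipschitz, $|\nabla\phi_j|\le 2/j$ a.e. (Because $\rho$ is only Lipschitz, one either mollifies $\rho$ to obtain $C^1$ cut-offs with $|\nabla\phi_j|\le C/j$, or first checks that Lemma~\ref{lemma 1} remains valid for Lipschitz test functions $u$ of compact support by mollification in local coordinates; both are routine.)

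Next I apply Lemma~\ref{lemma 1} with $u$ replaced by the compactly supported function $u\phi_j$ and use $\nabla(u\phi_j)=\phi_j\nabla u+u\nabla\phi_j$ to get
$$
\int_M u\phi_j\,(\mathcal{L}v)\,e^{-\frac{|X|^2}{2}}d\mu
=-\int_M \phi_j\langle\nabla u,\nabla v\rangle\,e^{-\frac{|X|^2}{2}}d\mu
-\int_M u\langle\nabla\phi_j,\nabla v\rangle\,e^{-\frac{|X|^2}{2}}d\mu .
$$
Now let $j\to\infty$ and treat the three integrals separately. Since $0\le\phi_j\le1$ and $\phi_j\to1$ pointwise, and since $|u\,\mathcal{L}v|\,e^{-|X|^2/2}$ and $|\nabla u||\nabla v|\,e^{-|X|^2/2}$ are integrable by hypothesis, dominated convergence shows the left-hand side converges to $\int_M u(\mathcal{L}v)e^{-|X|^2/2}d\mu$ and the first term on the right converges to $-\int_M\langle\nabla u,\nabla v\rangle e^{-|X|^2/2}d\mu$. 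For the remaining term, the bound $|\nabla\phi_j|\le 2/j$ gives
$$
\Bigl|\int_M u\langle\nabla\phi_j,\nabla v\rangle\,e^{-\frac{|X|^2}{2}}d\mu\Bigr|
\le\frac{2}{j}\int_M|u\nabla v|\,e^{-\frac{|X|^2}{2}}d\mu\ \longrightarrow\ 0,
$$
again by the integrability assumption. Combining the three limits yields the claimed identity.

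The only point requiring care is the construction of the exhausting cut-offs: they must be compactly supported — which forces the use of the \emph{intrinsic} distance together with completeness, the immersion not being assumed proper — while simultaneously satisfying $|\nabla\phi_j|=O(1/j)$ so that the error term drops out in the limit. Once such $\phi_j$ are available, the proof reduces to three applications of dominated convergence against the stated hypotheses, so the write-up should be short.
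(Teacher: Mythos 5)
Your proof is correct and is exactly the standard cut-off exhaustion argument (the one from Colding--Minicozzi's Corollary 3.10) that the paper implicitly relies on; the paper itself states the corollary without proof immediately after Lemma \ref{lemma 1}. All three integrability hypotheses are used where they should be, and your attention to the compact support and Lipschitz regularity of the intrinsic-distance cut-offs is appropriate.
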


\begin{lemma}\label{lemma 10}
Let $X: M\rightarrow \mathbb{R}^{n+1}$ be an $n$-dimensional complete $\lambda$-hypersurface with polynomial area growth, then
\begin{equation}\label{eq:1001}
 \int_M (\langle X,a\rangle -\lambda\langle N,a\rangle )e^{-\frac{|X|^2}{2}}d\mu=0,
\end{equation}


\begin{equation}\label{eq:1003}
 \int_M \bigl(n-|X|^2+\lambda\langle X,N\rangle \bigl)e^{-\frac{|X|^2}{2}}d\mu=0,
\end{equation}

 \begin{equation}\label{eq:1010}
\aligned
&\ \ \ \int_M \langle X, a\rangle |X|^2e^{-\frac{|X|^2}{2}}d\mu\\
&=\int_M \biggl(2n\lambda\langle N, a\rangle +2\lambda\langle X,a\rangle (\lambda-H) -\lambda\langle N,a\rangle |X|^2\biggl)e^{-\frac{|X|^2}{2}}d\mu,
\endaligned
\end{equation}

 \begin{equation}\label{eq:1011}
\int_M\langle X,a\rangle ^2e^{-\frac{|X|^2}{2}}d\mu=\int_M\biggl(|a^{T}|^2+\lambda\langle N,a\rangle \langle X,a\rangle \biggl)e^{-\frac{|X|^2}{2}}d\mu,
\end{equation}
where $a^{T}=\sum_i<a,e_i>e_i$.

\begin{equation}\label{eq:1012}
\aligned
&\int_M\biggl(|X|^2-n-\frac{\lambda(\lambda-H)}{2}\biggl)^2e^{-\frac{|X|^2}{2}}d\mu\\
&=\int_M\biggl\{(\frac{\lambda^2}{4}-1)(\lambda-H)^2 +2n-H^2+\lambda^2\biggl\}e^{-\frac{|X|^2}{2}}d\mu.
\endaligned
\end{equation}

\end{lemma}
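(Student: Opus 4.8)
The plan is to derive all five identities \eqref{eq:1001}--\eqref{eq:1012} from the three pointwise formulas of Lemma~\ref{lemma 0} by integrating against the Gaussian weight $e^{-|X|^2/2}$, using Corollary~\ref{corollary 1} (the integrated version of the divergence identity for $\mathcal{L}$) in place of Lemma~\ref{lemma 1}, since $M$ is complete and non-compact but has polynomial area growth. The first step is to record the decay facts that make Corollary~\ref{corollary 1}'s hypothesis $\int_M(|u\nabla v|+|\nabla u||\nabla v|+|u\mathcal{L}v|)e^{-|X|^2/2}d\mu<\infty$ verifiable: polynomial area growth together with the Gaussian weight guarantees that $\int_M p(|X|)e^{-|X|^2/2}d\mu<\infty$ for any polynomial $p$, and on a $\lambda$-hypersurface one has $H=\lambda-\langle X,N\rangle$, so $H$ and hence $S$ (via the Simons-type machinery, or more elementarily via $|\nabla\langle N,a\rangle|\le |A|$) are controlled by $|X|$; I would state this as a preliminary observation so that each subsequent application of the corollary is justified by "the integrand is bounded by a polynomial in $|X|$ times the weight."

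Then I would take the identities one at a time. For \eqref{eq:1001}: apply Corollary~\ref{corollary 1} with $u\equiv 1$ (approximated by compactly supported cutoffs, or directly since the corollary already allows this) and $v=\langle X,a\rangle$; the right-hand side $\int_M\langle\nabla 1,\nabla v\rangle=0$ and the left-hand side is $\int_M\mathcal{L}\langle X,a\rangle\,e^{-|X|^2/2}d\mu=\int_M(\lambda\langle N,a\rangle-\langle X,a\rangle)e^{-|X|^2/2}d\mu$ by \eqref{eq:2001}, giving the result. Identity \eqref{eq:1003} is identical with $v=\tfrac12|X|^2$ and \eqref{eq:2003}. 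For \eqref{eq:1011}: take $u=\langle X,a\rangle$, $v=\langle X,a\rangle$; then $\langle\nabla u,\nabla v\rangle=|\nabla\langle X,a\rangle|^2=|a^T|^2$ (using $\langle X,a\rangle_{,i}=\langle e_i,a\rangle$), while $\int_M u\,\mathcal{L}v\,e^{-|X|^2/2}d\mu=\int_M\langle X,a\rangle(\lambda\langle N,a\rangle-\langle X,a\rangle)e^{-|X|^2/2}d\mu$ again by \eqref{eq:2001}; rearranging yields \eqref{eq:1011}. For \eqref{eq:1010}: take $u=|X|^2$ (or $\tfrac12|X|^2$) and $v=\langle X,a\rangle$, so that $\langle\nabla u,\nabla v\rangle=\langle\nabla|X|^2,\nabla\langle X,a\rangle\rangle=2\sum_i\langle X,e_i\rangle\langle e_i,a\rangle=2\langle X^T,a\rangle=2(\langle X,a\rangle-\langle X,N\rangle\langle N,a\rangle)$; combine with $\mathcal{L}\langle X,a\rangle$ from \eqref{eq:2001} and then re-express the resulting $\int\langle X,N\rangle\langle N,a\rangle(\dots)$ and $\int|X|^2\langle N,a\rangle$ terms using $\langle X,N\rangle=\lambda-H$ and \eqref{eq:1001} to land exactly on the stated right-hand side.

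The last identity \eqref{eq:1012} is the one requiring real bookkeeping and is where I expect the main obstacle: it is an $L^2$-type identity, not a first-moment identity. The approach is to choose $v$ so that $\mathcal{L}v$ produces $|X|^2-n-\tfrac{\lambda(\lambda-H)}{2}$ up to lower order; the natural candidate is $v=\tfrac12|X|^2$ combined with $\langle X,N\rangle=\lambda-H$, since \eqref{eq:2003} gives $\tfrac12\mathcal{L}|X|^2 = n-|X|^2+\lambda(\lambda-H)$. Then I would apply Corollary~\ref{corollary 1} with $u=|X|^2-n-\tfrac{\lambda(\lambda-H)}{2}$ (or a suitable variant) and $v=\tfrac12|X|^2$, so that the left side becomes $-\int(\text{integrand of the LHS of }\eqref{eq:1012})e^{-|X|^2/2}d\mu$ up to a sign and a constant, while the right side $-\int\langle\nabla u,\nabla v\rangle e^{-|X|^2/2}d\mu$ expands using $\nabla|X|^2=2X^T$ and $\nabla H = -\sum_k h_{jk,\,}\cdots$ — here one needs $\nabla\langle X,N\rangle=-A(X^T)$, i.e.\ $\langle X,N\rangle_{,i}=-\sum_j h_{ij}\langle X,e_j\rangle$, so $\nabla H=-\nabla\langle X,N\rangle = A(X^T)$ — producing terms in $|X^T|^2$, $\langle X,N\rangle$, $H$, and the trace $\sum h_{ij}\langle X,e_i\rangle\langle X,e_j\rangle$. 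The delicate part is that this last trace term and the $|X^T|^2=|X|^2-\langle X,N\rangle^2$ term must be massaged, using $\langle X,N\rangle=\lambda-H$ and the already-proven relations \eqref{eq:1003}, \eqref{eq:1010} (with $a$ replaced appropriately, or a scalar analogue such as $\int\langle X,N\rangle|X|^2$ obtained by the same cutoff argument with $v=\langle X,N\rangle$ and $u=|X|^2$, invoking \eqref{eq:2002} which gives $\mathcal{L}\langle X,N\rangle=-S\langle X,N\rangle$ — wait, one wants $\mathcal{L}H$ instead), until every term of degree higher than quadratic cancels and the constant-coefficient combination $(\tfrac{\lambda^2}{4}-1)(\lambda-H)^2+2n-H^2+\lambda^2$ emerges. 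I would carry this out by first expanding $\bigl(|X|^2-n-\tfrac{\lambda(\lambda-H)}{2}\bigr)^2$ and treating each monomial — $|X|^4$, $|X|^2$, $|X|^2 H$, $H^2$, constants — via its own integrated $\mathcal{L}$-identity, which reduces \eqref{eq:1012} to a finite linear combination of \eqref{eq:1001}--\eqref{eq:1011} and the standard Gaussian moment identities; the bookkeeping is lengthy but mechanical once the decay hypotheses of Corollary~\ref{corollary 1} are checked, which they are by the polynomial-area-growth preamble.
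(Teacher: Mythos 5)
Your overall strategy is the same as the paper's: integrate the pointwise identities of Lemma \ref{lemma 0} against the Gaussian weight via Corollary \ref{corollary 1}, with polynomial area growth supplying the integrability hypotheses (every integrand occurring here is polynomial in $|X|$, since $H=\lambda-\langle X,N\rangle$; your side remark that $S$ is thereby ``controlled by $|X|$'' is false in general, but harmless because $S$ never appears in this lemma). Your treatments of \eqref{eq:1001}, \eqref{eq:1003} and \eqref{eq:1011} coincide with the paper's, and your fallback plan for \eqref{eq:1012} --- expand the square and reduce everything to the $u=v=|X|^2$ identity together with \eqref{eq:1003} --- is exactly the paper's computation, just reorganized; your first-described route through $u=|X|^2-n-\tfrac{\lambda(\lambda-H)}{2}$ is unnecessary and, as you yourself sensed, would drag in $\nabla H$ terms that the direct route avoids.

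The one step that does not close as described is \eqref{eq:1010}. A single application of Corollary \ref{corollary 1} with $u=|X|^2$, $v=\langle X,a\rangle$ gives
\begin{equation*}
\int_M\langle X,a\rangle|X|^2e^{-\frac{|X|^2}{2}}d\mu=\int_M\Bigl(\lambda\langle N,a\rangle|X|^2+2\langle X,a\rangle-2(\lambda-H)\langle N,a\rangle\Bigr)e^{-\frac{|X|^2}{2}}d\mu,
\end{equation*}
in which $\lambda\int_M\langle N,a\rangle|X|^2e^{-\frac{|X|^2}{2}}d\mu$ enters with the \emph{opposite} sign to the one in \eqref{eq:1010}. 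Neither the substitution $\langle X,N\rangle=\lambda-H$ nor \eqref{eq:1001} can convert $\int_M\langle N,a\rangle|X|^2e^{-\frac{|X|^2}{2}}d\mu$ into anything else, so you cannot ``land exactly on the stated right-hand side'' from this identity alone; what you have proved is a correct but different relation, and reconciling it with \eqref{eq:1010} would itself require an extra moment identity for $\lambda\int_M\langle N,a\rangle|X|^2e^{-\frac{|X|^2}{2}}d\mu$ that you have not established. The missing ingredient is the symmetry $\int_Mu\,\mathcal{L}v\,e^{-\frac{|X|^2}{2}}d\mu=\int_Mv\,\mathcal{L}u\,e^{-\frac{|X|^2}{2}}d\mu$, obtained by applying the corollary a second time with $u$ and $v$ interchanged: writing $\int_M|X|^2\mathcal{L}\langle X,a\rangle\,e^{-\frac{|X|^2}{2}}d\mu=\int_M\langle X,a\rangle\mathcal{L}|X|^2\,e^{-\frac{|X|^2}{2}}d\mu$ and expanding $\mathcal{L}|X|^2$ by \eqref{eq:2003} makes $\int_M\langle X,a\rangle|X|^2e^{-\frac{|X|^2}{2}}d\mu$ reappear on the right with coefficient $2$, after which one solves for it and uses \eqref{eq:1001} to replace $2n\int_M\langle X,a\rangle$ by $2n\lambda\int_M\langle N,a\rangle$; the term $\lambda\int_M\langle N,a\rangle|X|^2e^{-\frac{|X|^2}{2}}d\mu$ is then carried along with the correct sign. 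This is precisely the proof in the paper.
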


\begin{proof}
Equations \eqref{eq:1001}
and \eqref{eq:1003} just follow
from the corollary \ref{corollary 1} and equations \eqref{eq:2001},
and \eqref{eq:2003}.
Since $X: M\rightarrow \mathbb{R}^{n+1}$ is an $n$-dimensional complete $\lambda$-hypersurface with polynomial area growth,
by making use of $u=|X|^2$, $v=\langle X,a\rangle $ in the lemma \ref{lemma 1}, we have

\begin{equation*}
\aligned
&\int_M \langle X, a\rangle |X|^2e^{-\frac{|X|^2}{2}}d\mu\\
&=-\int_M \mathcal{L}\langle X,a\rangle |X|^2e^{-\frac{|X|^2}{2}}d\mu
+\int_M\lambda \langle N,a\rangle |X|^2e^{-\frac{|X|^2}{2}}d\mu\\
                  &=-\int_M\langle X,a\rangle \mathcal{L}|X|^2e^{-\frac{|X|^2}{2}}d\mu+\int_M\lambda \langle N,a\rangle |X|^2e^{-\frac{|X|^2}{2}}d\mu\\
                  &=-\int_M 2\langle X,a\rangle \bigl[n+\lambda\langle X,N\rangle -|X|^2\bigl]e^{-\frac{|X|^2}{2}}d\mu
                  +\int_M\lambda \langle N,a\rangle |X|^2e^{-\frac{|X|^2}{2}}d\mu\\
                  &=2\int_M \langle X, a\rangle |X|^2e^{-\frac{|X|^2}{2}}d\mu-2n\int_M\langle X,a\rangle -2\lambda\langle X,a\rangle (\lambda-H)e^{-\frac{|X|^2}{2}}d\mu\\
                  &\ \ \ +\int_M\lambda \langle N,a\rangle |X|^2e^{-\frac{|X|^2}{2}}d\mu.
\endaligned
\end{equation*}
Hence, it follows that
\begin{equation*}
\aligned
&\ \ \ \int_M \langle X, a\rangle |X|^2e^{-\frac{|X|^2}{2}}d\mu\\
&=\int_M \biggl(2n\lambda\langle N, a\rangle +2\lambda\langle X,a\rangle (\lambda-H) -\lambda\langle N,a\rangle |X|^2\biggl)e^{-\frac{|X|^2}{2}}d\mu.
\endaligned
\end{equation*}
Taking  $u=v=\langle X,a\rangle $ in the lemma \ref{lemma 1}, we can get \eqref{eq:1011}.
Putting $u=v=|X|^2$ in the lemma \ref{lemma 1}, we can have
\begin{equation*}
\aligned
& \int_M \lambda(\lambda-H)|X|^2e^{-\frac{|X|^2}{2}}d\mu\\
&=\int_M(|X|^4-n|X|^2
 +\frac{1}{2}|X|^2\mathcal{L}|X|^2)e^{-\frac{|X|^2}{2}}d\mu\\
 &=\int_M (|X|^4-n|X|^2)e^{-\frac{|X|^2}{2}}d\mu-\int_M\frac{1}{2}\langle \nabla |x|^2,\nabla |x|^2\rangle e^{-\frac{|X|^2}{2}}d\mu\\
 &=\int_M \bigl(|X|^4-(n+2)|X|^2+2(\lambda-H)^2\bigl)e^{-\frac{|X|^2}{2}}d\mu,
\endaligned
\end{equation*}
that is,
\begin{equation*}
\int_M \biggl\{|X|^4-[n+\lambda(\lambda-H)]|X|^2-2|X|^2+2(\lambda-H)^2\biggl\}e^{-\frac{|X|^2}{2}}d\mu=0.
\end{equation*}
Thus, we have

\begin{equation*}
\aligned
&0=\int_M \biggl\{|X|^4-2[n+\frac{(\lambda-H)\lambda}{2}]|X|^2+n^2
 +n\lambda(\lambda-H)\\
 &\ \ \ \ \ \ \ \ \ \ -2n-2\lambda(\lambda-H)+2(\lambda-H)^2\biggl\}e^{-\frac{|X|^2}{2}}d\mu\\
 &=\int_M\biggl\{\bigl[|X|^2-(n+\frac{\lambda(\lambda-H)}{2})\bigl]^2
    -\frac{\lambda^2(\lambda-H)^2}{4}+2(\lambda-H)^2\\
 &\ \ \ \ \ \ \ \ \ \ -2n-2\lambda(\lambda-H)\biggl\}e^{-\frac{|X|^2}{2}}d\mu\\
 &=\int_M\biggl\{\bigl(|X|^2-n-\frac{\lambda(\lambda-H)}{2}\bigl)^2-(\frac{\lambda^2}{4}-1)(\lambda-H)^2
    -2n+H^2-\lambda^2\biggl\}e^{-\frac{|X|^2}{2}}d\mu,
\endaligned
\end{equation*}
namely,
\begin{equation*}
\aligned
&\int_M\biggl(|X|^2-n-\frac{\lambda(\lambda-H)}{2}\biggl)^2e^{-\frac{|X|^2}{2}}d\mu\\
&=\int_M\biggl\{(\frac{\lambda^2}{4}-1)(\lambda-H)^2 +2n-H^2+\lambda^2\biggl\}e^{-\frac{|X|^2}{2}}d\mu.
\endaligned
\end{equation*}
\end{proof}

\vskip 5mm

\section{A classification of compact $\lambda$-hypersurfaces}

\noindent
In this section, we will give a classification of compact $\lambda$-hypersurfaces.
First of all, we give some lemmas.
\begin{lemma}\label{lemma 2}
Let $X: M\rightarrow \mathbb{R}^{n+1}$ be an $n$-dimensional $\lambda$-hypersurface. Then,
the following holds.
\begin{equation}
\mathcal{L}H=H+S(\lambda-H),
\end{equation}

\begin{equation}\label{eq:224-6}
\frac{1}{2}\mathcal{L}S=\sum_{i,j,k}h_{ijk}^2+(1-S)S+\lambda f_3,
\end{equation}

\begin{equation}\label{eq:224-9}
\mathcal{L}\sqrt{S}=\frac{1}{\sqrt{S}}\biggl(\sum_{i,j,k}h_{ijk}^2-|\nabla\sqrt{S}|^2\biggl)+\sqrt{S}(1-S)
 +\frac{1}{\sqrt{S}}\lambda f_3,
\end{equation}

\begin{equation}\label{eq:224-10}
\mathcal{L}\log(H-\lambda)=1-S+\frac{\lambda}{H-\lambda}-|\nabla \log(H-\lambda)|^2,\ \ \ \ {\rm if}\ H-\lambda>0,
\end{equation}
where $f_3=\sum_{i,j,k}h_{ij}h_{jk}h_{ki}$.
\end{lemma}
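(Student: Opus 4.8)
The plan is to derive all four identities from the single relation $\langle X,N\rangle+H=\lambda$, equivalently $\langle X,N\rangle=\lambda-H$, that defines a $\lambda$-hypersurface (specialized here, as in this section, to $X_{0}=0$, $t_{0}=1$). I will first establish the formula for $\mathcal{L}H$, then combine it with a Simons-type identity to obtain \eqref{eq:224-6}, and finally read off \eqref{eq:224-9} and \eqref{eq:224-10} from the chain rule for $\mathcal{L}$. To begin, differentiating $H=\lambda-\langle X,N\rangle$ and using $\nabla_{k}X=e_{k}$, $\nabla_{k}N=-\sum_{j}h_{jk}e_{j}$ gives $H_{,k}=\sum_{j}h_{jk}\langle X,e_{j}\rangle$. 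Differentiating once more — it is convenient to work at a fixed point in a frame with $\nabla e_{i}=0$ there, so that the covariant derivative of the tangential field $X^{T}=\sum_{j}\langle X,e_{j}\rangle e_{j}$ has $j$-component $\delta_{lj}+h_{lj}\langle X,N\rangle$ — yields $H_{,kl}=\sum_{j}h_{jkl}\langle X,e_{j}\rangle+h_{kl}+(\lambda-H)(h^{2})_{kl}$, where $(h^{2})_{kl}=\sum_{m}h_{km}h_{ml}$. Tracing over $k,l$, using the Codazzi equations \eqref{eq:12-6-5} in the form $\sum_{k}h_{jkk}=H_{,j}$, and $\sum_{k}h_{kk}=H$, $\sum_{k}(h^{2})_{kk}=S$, gives $\Delta H=\langle X,\nabla H\rangle+H+S(\lambda-H)$, that is, $\mathcal{L}H=H+S(\lambda-H)$.

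For \eqref{eq:224-6} I will run the standard Bochner computation for $S=\sum_{i,j}h_{ij}^{2}$, namely $\frac{1}{2}\Delta S=\sum_{i,j,k}h_{ijk}^{2}+\sum_{i,j}h_{ij}\Delta h_{ij}$, together with the Simons identity. The latter comes from permuting indices with Codazzi \eqref{eq:12-6-5} (so that $h_{ijk}$ is totally symmetric), commuting the last two covariant derivatives by the Ricci formula \eqref{eq:12-6-6}, and substituting the Gauss equations \eqref{eq:12-6-4}; the two cubic terms cancel and one is left with $\Delta h_{ij}=H_{,ij}+H(h^{2})_{ij}-S\,h_{ij}$. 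Now I substitute the expression for $H_{,ij}$ found above — this is exactly where the $\lambda$-term enters, through $\langle X,N\rangle=\lambda-H$ instead of the self-shrinker value $-H$ — contract with $h_{ij}$, and use $\sum_{i,j}h_{ij}h_{ijk}=\frac{1}{2}S_{,k}$ and $\sum_{i,j}h_{ij}(h^{2})_{ij}=f_{3}$. This produces $\frac{1}{2}\Delta S=\sum_{i,j,k}h_{ijk}^{2}+\frac{1}{2}\langle X,\nabla S\rangle+S-S^{2}+\lambda f_{3}$, and subtracting $\frac{1}{2}\langle X,\nabla S\rangle$ gives \eqref{eq:224-6}.

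The remaining two identities are formal consequences of the chain rule $\mathcal{L}(g(u))=g'(u)\,\mathcal{L}u+g''(u)|\nabla u|^{2}$, which is valid because it holds separately for $\Delta$ and for $\langle X,\nabla\,\cdot\,\rangle$. Taking $g(t)=\sqrt{t}$ and $u=S$, and using $|\nabla\sqrt{S}|^{2}=|\nabla S|^{2}/(4S)$ to rewrite the $g''$-term, converts \eqref{eq:224-6} into \eqref{eq:224-9}. Taking $g(t)=\log t$ and $u=H-\lambda$ (legitimate where $H-\lambda>0$), and using $\mathcal{L}(H-\lambda)=\mathcal{L}H=H+S(\lambda-H)$ together with the algebraic rearrangement $\frac{H+S(\lambda-H)}{H-\lambda}=1-S+\frac{\lambda}{H-\lambda}$, gives \eqref{eq:224-10}.

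The only genuinely delicate step is the Simons identity in the second paragraph: one must track carefully the curvature terms produced by \eqref{eq:12-6-6}, check that the two occurrences of the cubic term $(h^{3})_{ij}=\sum_{k,m}h_{ik}h_{km}h_{mj}$ cancel, and — the point specific to $\lambda$-hypersurfaces — retain the extra contribution coming from $\langle X,N\rangle=\lambda-H$, which is precisely what generates the $\lambda f_{3}$ term in \eqref{eq:224-6} and hence the $\lambda$-terms in \eqref{eq:224-9} and \eqref{eq:224-10}. Everything else is routine differentiation and bookkeeping.
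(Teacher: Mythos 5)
Your proposal is correct and follows essentially the same route as the paper: the same first and second derivatives of $H$ from $\langle X,N\rangle=\lambda-H$, the same identity $\mathcal{L}h_{ij}=(1-S)h_{ij}+\lambda\sum_k h_{ik}h_{kj}$ (which you obtain by writing out the Simons identity explicitly where the paper simply states it as "a direct calculation"), and the same chain-rule reductions for $\sqrt{S}$ and $\log(H-\lambda)$. No gaps.
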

\vskip 3pt\noindent {\it Proof}.
Since $\langle X,N\rangle +H=\lambda$, one has
\begin{equation}\label{eq:224-3}
H_{,i}=\sum_j h_{ij}\langle X,e_j\rangle,
\end{equation}
\begin{equation*}
H_{,ik}=\sum_jh_{ijk}\langle X,e_j\rangle +h_{ik}+\sum_jh_{ij}h_{jk}(\lambda-H).
\end{equation*}
Hence,
\begin{equation}\label{eq:224-11}
\Delta H=\sum_iH_{,ii}=\sum_iH_{,i}\langle X,e_i\rangle +H+S(\lambda-H)
\end{equation}
and
\begin{equation*}
\aligned
\mathcal{L}H&=\Delta H-\sum_i\langle X,e_i\rangle H_{,i}=H+S(\lambda-H).
\endaligned
\end{equation*}
By a direct calculation, we have from (\ref{eq:12-6-6})
\begin{equation*}
\aligned
\mathcal{L}h_{ij}&=\Delta h_{ij}-\sum_k\langle X,e_k\rangle h_{ijk}\\
&=(1-S)h_{ij}+\lambda\sum_kh_{ik}h_{kj}.
\endaligned
\end{equation*}
Then it follows that
\begin{equation*}
\aligned
\frac{1}{2}\mathcal{L}S&=\frac{1}{2}\biggl(\Delta \sum_{i,j}h_{ij}^2-\sum_k\langle X,e_k\rangle \bigl(\sum_{i,j}h_{ij}^2\bigl)_{,k}\biggl)\\
&=\sum_{i,j,k}h_{ijk}^2+(1-S)S+\lambda f_3.
\endaligned
\end{equation*}
Since
\begin{equation}
\mathcal{L}S=2|\nabla\sqrt{S}|^2+2\sqrt{S}\mathcal{L}\sqrt{S},
\end{equation}
we have

\begin{equation*}
\aligned
\mathcal{L}\sqrt{S}&=\frac{1}{2\sqrt{S}}\mathcal{L}S-\frac{|\nabla \sqrt{S}|^2}{\sqrt{S}}\\
                   &=\frac{1}{\sqrt{S}}\biggl(\sum_{i,j,k}h_{ijk}^2-|\nabla\sqrt{S}|^2\biggl)+\sqrt{S}(1-S)
 +\frac{1}{\sqrt{S}}\lambda f_3.
\endaligned
\end{equation*}
\begin{equation*}
\aligned
\mathcal{L}\log(H-\lambda)&=\Delta\log(H-\lambda)-\sum_i\langle X,e_i\rangle (\log(H-\lambda))_{,i}\\
&=\frac{1}{H-\lambda}\mathcal{L}H-|\nabla \log(H-\lambda)|^2\\
&=1-S+\frac{\lambda}{H-\lambda}-|\nabla \log(H-\lambda)|^2.
\endaligned
\end{equation*}
We complete the proof of the lemma.
$$\eqno{\Box}$$

\begin{theorem}\label{theorem 4.1}
Let $X: M\rightarrow \mathbb{R}^{n+1}$  be an $n$-dimensional compact $\lambda$-hypersurface in $\mathbb{R}^{n+1}$.
If $H-\lambda\geq0$ and  $\lambda(f_3(H-\lambda)-S)\geq 0$, then $X: M\rightarrow \mathbb{R}^{n+1}$
is isometric to a round sphere $S^{n}(r)$ with $\lambda =\frac n r- r$.
\end{theorem}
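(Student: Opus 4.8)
The plan is to run a Simons-type integral argument using the operator $\mathcal{L}$ together with the self-adjointness property of Lemma \ref{lemma 1} (valid here since $M$ is compact, so no growth hypothesis is needed and $u\equiv 1$ is an admissible test function). Since $M$ is compact we have $H-\lambda>0$ somewhere unless $H\equiv\lambda$; I would first dispose of the degenerate case. If $H-\lambda$ vanishes identically, then $H=\lambda$ is constant, so Proposition \ref{prop 2.2} forces $M$ to be isometric to $S^k(r)\times\mathbb{R}^{n-k}$ locally, and compactness rules out the noncompact factors, giving $k=n$ and $M=S^n(r)$ with $\lambda=\frac{n}{r}-r$. So from now on assume $H-\lambda\geq 0$ with $H-\lambda>0$ on a nonempty open set.

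Next I would integrate the two key identities from Lemma \ref{lemma 2} against the Gaussian weight. From \eqref{eq:224-6}, integrating $\tfrac12\mathcal{L}S$ over $M$ with weight $e^{-|X|^2/2}$ and using Lemma \ref{lemma 1} with $u\equiv 1$ kills the left side, yielding
\begin{equation*}
\int_M\Bigl(\sum_{i,j,k}h_{ijk}^2+(1-S)S+\lambda f_3\Bigr)e^{-\frac{|X|^2}{2}}d\mu=0.
\end{equation*}
Similarly, from $\mathcal{L}H=H+S(\lambda-H)$ I would get $\int_M\bigl(H+S(\lambda-H)\bigr)e^{-|X|^2/2}d\mu=0$, i.e. $\int_M H\,e^{-|X|^2/2}d\mu=\int_M S(H-\lambda)e^{-|X|^2/2}d\mu\geq 0$. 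The idea is to combine the $S$-identity with a weighted integral of $H^2$ (obtained by multiplying $\mathcal{L}H=H+S(\lambda-H)$ by $H$ and integrating, which produces $\int_M|\nabla H|^2 = \int_M(H^2+SH(\lambda-H))$) to isolate a nonnegative combination. The resulting expression should, after using $h_{ijk}^2\geq \frac{3}{n+2}|\nabla H|^2$ (the standard Codazzi/Simons inequality) and the hypothesis $\lambda(f_3(H-\lambda)-S)\geq 0$, force every term to vanish: $\sum h_{ijk}^2=0$ (so $M$ is isoparametric), $S$ is constant, and the extra term degenerates. Once $\nabla h_{ij}\equiv 0$, $H$ is constant, and Proposition \ref{prop 2.2} plus compactness finishes the argument as in the degenerate case.

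The main obstacle I anticipate is the algebraic bookkeeping needed to assemble the right nonnegative quadratic: the sign of $\lambda f_3$ is not controlled on its own, and it is precisely the combined hypothesis $\lambda\bigl(f_3(H-\lambda)-S\bigr)\geq 0$ that is designed to absorb it — so the delicate point is to rewrite $\int_M\bigl(S(1-S)+\lambda f_3\bigr)e^{-|X|^2/2}d\mu$, using the $H$- and $H^2$-identities to substitute for the $S(1-S)$ part, in such a way that $\lambda f_3$ appears only through the good combination $\lambda(f_3(H-\lambda)-S)$ plus manifestly nonnegative gradient terms. I would also need to double-check that the Cauchy–Schwarz-type inequality $\sum_{i,j,k}h_{ijk}^2\geq \frac{3}{n+2}|\nabla H|^2$ is applied with the correct constant and that equality there, together with the vanishing of the other terms, indeed yields parallel second fundamental form rather than merely $\nabla H=0$; if only $\nabla H=0$ comes out directly, that already suffices via Proposition \ref{prop 2.2}.
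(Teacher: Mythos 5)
There is a genuine gap, in two places. First, your reduction to ``$H-\lambda>0$ on a nonempty open set'' is not the dichotomy you need: every subsequent step (in particular using the hypothesis in the form $\lambda f_3\geq \lambda S/(H-\lambda)$, or introducing $\log(H-\lambda)$ or $S/(H-\lambda)^2$) requires $H-\lambda>0$ \emph{everywhere}. The paper gets this from $\mathcal{L}H-H=S(\lambda-H)\leq 0$ and the strong maximum principle when $\lambda\leq 0$, and from a separate pointwise argument using $\lambda\bigl(f_3(H-\lambda)-S\bigr)\geq 0$ when $\lambda>0$ (if $H-\lambda$ vanished at a point, then $S=0$ and hence $H=0$ there, forcing $\lambda=0$). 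Also, in your degenerate case $H\equiv\lambda$ the correct conclusion is a contradiction ($\Delta H=H+S(\lambda-H)$ forces $H=\lambda=0$, impossible for compact $M$), not $M=S^n(r)$; harmless here, but worth noting.

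Second, and more seriously, the heart of the proof --- the ``right nonnegative quadratic'' --- is exactly the step you defer, and the ingredients you propose do not assemble it. The identities $\int\bigl(\sum h_{ijk}^2+(1-S)S+\lambda f_3\bigr)e^{-|X|^2/2}=0$ and $\int|\nabla H|^2e^{-|X|^2/2}=\int\bigl(H^2+SH(\lambda-H)\bigr)e^{-|X|^2/2}$, together with $\sum h_{ijk}^2\geq\tfrac{3}{n+2}|\nabla H|^2$, do not combine into a perfect square; the Kato-type inequality plays no role in the paper's compact argument (it is only used in Section 8 for integrability estimates). The mechanism that works is to test $\mathcal{L}$ of a quotient such as $S/(H-\lambda)^2$ against $S\,e^{-|X|^2/2}$ (equivalently, to pair $\mathcal{L}H$ with $S/(H-\lambda)$ rather than with $H$): after substituting $\lambda f_3=\lambda\bigl(f_3-\tfrac{S}{H-\lambda}\bigr)+\tfrac{\lambda S}{H-\lambda}$ and $S(H-\lambda)-H=-\mathcal{L}H$, the cross terms close up into
\begin{equation*}
\int_M\frac{1}{(H-\lambda)^2}\sum_{i,j,k}\bigl|h_{ijk}(H-\lambda)-h_{ij}H_{,k}\bigr|^2e^{-\frac{|X|^2}{2}}d\mu
+\int_M\frac{\lambda\bigl(f_3(H-\lambda)-S\bigr)}{H-\lambda}e^{-\frac{|X|^2}{2}}d\mu\leq 0,
\end{equation*}
which is the analogue of the paper's identity (4.8). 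Note that even this does not give $h_{ijk}\equiv 0$ directly, only $h_{ijk}(H-\lambda)=h_{ij}H_{,k}$ (plus, in the paper's version, $S/(H-\lambda)^2$ constant); a further case analysis using $h_{ij}H_{,k}=h_{ik}H_{,j}$ is still required to rule out non-constant $H$ before Lawson's theorem and compactness finish the proof. As written, your proposal correctly identifies the framework but is missing the decisive completion-of-the-square and the final rigidity step.
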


\vskip 3pt\noindent {\it Proof}.
Since
\begin{equation*}
\mathcal{L}H=H+S(\lambda-H)
\end{equation*}
and
$$H-\lambda\geq0,$$
we have
\begin{equation*}
\mathcal{L}H-H\leq 0.
\end{equation*}
If $\lambda\leq 0$, we conclude from the maximum principle that either $H\equiv\lambda$ or $H-\lambda> 0$.
 If $H\equiv\lambda$,  \eqref{eq:224-11} gives that $H=\lambda=0$ and $M$ is a self-shrinker, it is impossible since M is compact; If $\lambda>0$, we have $f_3(H-\lambda)-S\geq0$. In this case, if $H-\lambda=0$ at some point $p\in M$, then $S=0$ and $H=\lambda=0$ at $p$, that is $\lambda\equiv0$ and $M$ is self-shrinker, it is also impossible since M is compact. Hence for any $\lambda$, we have $H-\lambda> 0$.

\noindent From the lemma \ref{lemma 2}, we can get

\begin{equation*}
\aligned
\mathcal{L}\frac{1}{(H-\lambda)^2}&=\Delta\frac{1}{(H-\lambda)^2}-\sum_i\langle X,e_i\rangle \bigl(\frac{1}{(H-\lambda)^2}\bigl)_{,i}\\
&=\frac{6}{(H-\lambda)^4}|\nabla (H-\lambda)|^2-\frac{2}{(H-\lambda)^3}[H-S(H-\lambda)]
\endaligned
\end{equation*}
and
\begin{equation*}
\aligned
\mathcal{L}\frac{S}{(H-\lambda)^2}&=\Delta\frac{S}{(H-\lambda)^2}-\sum_i\langle X,e_i\rangle \bigl(\frac{S}{(H-\lambda)^2}\bigl)_{,i}\\
&=\frac{1}{(H-\lambda)^2}\mathcal{L}S+2\langle \nabla S,\nabla
  (\frac{1}{(H-\lambda)^2})\rangle +S\mathcal{L}(\frac{1}{(H-\lambda)^2})\\
&=\frac{2}{(H-\lambda)^2}\biggl(\sum_{i,j,k}h_{ijk}^2+(1-S)S+\lambda f_3\biggl)
  +2\langle \nabla S,\nabla(\frac{1}{(H-\lambda)^2})\rangle \\
&\ \ +S\biggl(\frac{6}{(H-\lambda)^4}|\nabla (H-\lambda)|^2-\frac{2}{(H-\lambda)^3}[H-S(H-\lambda)]\biggl).
\endaligned
\end{equation*}
By multiplying $Se^{-\frac{|X|^2}{2}}$ in the above equation and using
\begin{equation*}
\int_M S\mathcal{L}\frac{S}{(H-\lambda)^2}e^{-\frac{|X|^2}{2}}d\mu
=-\int_M \langle \nabla S,\nabla(\frac{S}{(H-\lambda)^2})\rangle e^{-\frac{|X|^2}{2}}d\mu,
\end{equation*}
one has

\begin{equation}\label{eq:4-11-1}
\aligned
&\ \ 2\int_M\frac{S}{(H-\lambda)^4}\sum_{i,j,k}|h_{ijk}(H-\lambda)-h_{ij}H_{,k}|^2e^{-\frac{|X|^2}{2}}d\mu\\
&
+\int_M |\nabla (\frac{S}{(H-\lambda)^2})|^2(H-\lambda)^2e^{-\frac{|X|^2}{2}}d\mu\\
&+2\int_M\frac{S}{(H-\lambda)^2}\lambda\biggl(f_3-\frac{S}{H-\lambda}\biggl)e^{-\frac{|X|^2}{2}}d\mu=0.
\endaligned
\end{equation}
Then it follows from $\lambda(f_3(H-\lambda)-S)\geq 0$ that
\begin{equation}
\lambda(f_3-\frac{S}{H-\lambda})=0, \ \ \frac{S}{(H-\lambda)^2}={\rm constant},\ \ h_{ijk}(H-\lambda)=h_{ij}H_{,k}.
\end{equation}
We next consider two cases.

{\bf Case 1:  $\lambda=0$}

\noindent
In this case, we know $M$ is isometric to $S^n(\sqrt{n})$ from Huisken's result \cite{[H2]}.

{\bf Case 2:  $\lambda\neq 0$}

\noindent
In this case, one gets
$$
f_3-\frac{S}{H-\lambda}=0,\ \ h_{ijk}(H-\lambda)=h_{ij}H_{,k}.
$$
If $H$ is constant, then $h_{ijk}=0$, thus $M$ is $S^n(r)$ by the result of Lawson \cite{[L]}.

\noindent
If $H$ is not constant, then there exists a neighborhood $U$ such that $|\nabla H|\neq 0$ on $U$.
We can choose $e_1, \cdots, e_n$ such that $e_1=\frac{\nabla H}{|\nabla H|}$. It follows from $h_{ijk}=h_{ikj}$ that $h_{ij}H_{,k}=h_{ik}H_{,j}$ and
\begin{equation*}
\aligned
&0=\sum_{i,j,k}|h_{ij}H_{,k}-h_{ik}H_{,j}|^2\\
&\ \ =2S|\nabla H|^2-2\sum_ih_{1i}^2|\nabla H|^2\\
&\ \ =2|\nabla H|^2(S-\sum_i h_{1i}^2),
\endaligned
\end{equation*}
that is,
$$
\sum_{i=1}^nh_{1i}^2=S=h_{11}^2+2\sum_{j\neq1}^nh_{1j}^2+\sum_{k,l\geq2}h_{kl}^2.
$$
Therefore,  $S=h_{11}^2=H^2$ on $U$. On the other hand, we see from $\frac{S}{(H-\lambda)^2}={\rm constant}$ that $H$ is constant on $U$.
It is a contradiction. The proof of the theorem \ref{theorem 4.1} is completed.
$$\eqno{\Box}$$

\begin{remark}  The assumption  $\lambda(f_3(H-\lambda)-S)\geq 0$ in the theorem \ref{theorem 4.1} is satisfied
for self-shrinkers of the mean curvature flow, automatically. When $\lambda>0$,  this condition is needed in order to prove
$H>\lambda$ since the maximum  principle does not work for this case. We  think that  the assumption is essential.
In particular, for case of complete and non-compact $\lambda$-hypersurfaces, this condition is essential in
section 8. In fact, $\Gamma \times \mathbb{R}^{n-1}$ are counterexamples since $H-\lambda>0$, where $\Gamma$ are compact embedded $\lambda$-curves other than the circle (see Remark 2.2). It is a very interesting problem to construct  counterexamples for compact case.
\end{remark}

\vskip 5mm

\section{The first variation of $\mathcal{F}$-functional}

\noindent
In this section, we will give another  variational characterization of $\lambda$-hypersurfaces. Let $X(s): M\rightarrow \mathbb{R}^{n+1}$ be immersions with $X(0)=X$. The variation vector field $\frac{\partial}{\partial s}X(s)|_{s=0}$
is the normal variation vector field $fN$.

\noindent
For $X_0\in \mathbb{R}^{n+1}$ and a real number $t_0 $, the $\mathcal{F}$-functional is defined by
\begin{equation*}
\aligned
&\ \ \ \ \mathcal{F}_{X_s,t_s}(s)=\mathcal{F}_{X_s,t_s}(X(s))\\
&=(4\pi t_s)^{-\frac{n}{2}}\int_M e^{-\frac{|X(s)-X_s|^2}{2t_s}}d\mu_s
  +\lambda (4\pi t_0)^{-\frac{n}{2}}(\frac{t_0}{t_s})^{\frac{1}{2}}\int_M \langle X(s)-X_s,N\rangle e^{-\frac{|X-X_0|^2}{2t_0}}d\mu,
\endaligned
\end{equation*}
where $X_s$ and $t_s$ denote the variations of $X_0$ and $t_0$.
Let
\begin{equation*}
\frac{\partial t_s}{\partial s}=h(s),\ \ \frac{\partial X_s}{\partial s}=y(s),\ \
\frac{\partial X(s)}{\partial s}=f(s)N(s),
\end{equation*}
one calls that  $X: M\rightarrow \mathbb{R}^{n+1}$ is {\it a critical point of } $\mathcal{F}_{X_s,t_s}(s)$
if it is critical with respect to all normal variations and all variations in $X_0$ and $t_0$.

\begin{lemma}\label{lemma 3}
Let $X(s)$ be a variation of $X$ with normal variation vector field $\frac{\partial X(s)}{\partial s}|_{s=0}=fN$. If $X_s$ and $t_s$ are variations of $X_0$ and $t_0$ with $\frac{\partial X_s}{\partial s}|_{s=0}=y$ and $\frac{\partial t_s}{\partial s}|_{s=0}=h$, then the first variation formula of $\mathcal{F}_{X_s,t_s}(s)$ is given by
\begin{equation}\label{eq:1000}
\aligned
&\mathcal{F}^{'}_{X_0,t_0}(0)\\ &=(4\pi t_0)^{-\frac{n}{2}}\int_M \biggl(\lambda-(H+\langle \frac{X-X_0}{t_0},N\rangle )\biggl)f e^{-\frac{|X-X_0|^2}{2}}d\mu\\
&\ \ +(4\pi t_0)^{-\frac{n}{2}}\int_M \biggl(\langle \frac{X-X_0}{t_0},y\rangle -\lambda \langle N,y\rangle \biggl)e^{-\frac{|X-X_0|^2}{2}}d\mu\\
&\ \ +(4\pi t_0)^{-\frac{n}{2}}\int_M \biggl(\frac{|X-X_0|^2}{t_0}-n-\lambda\langle X-X_0,N\rangle \biggl)\frac{h}{2t_0}e^{-\frac{|X-X_0|^2}{2}}d\mu.
\endaligned
\end{equation}
\end{lemma}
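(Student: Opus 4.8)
The plan is to split the functional as $\mathcal{F}_{X_s,t_s}(s)=\mathcal{F}_1(s)+\lambda\,\mathcal{F}_2(s)$, where
\[
\mathcal{F}_1(s)=(4\pi t_s)^{-n/2}\int_M e^{-\frac{|X(s)-X_s|^2}{2t_s}}\,d\mu_s
\]
is the weighted area part (essentially the $\mathcal{F}$-functional of Colding-Minicozzi \cite{[CM]}) and
\[
\mathcal{F}_2(s)=(4\pi t_0)^{-n/2}\Big(\frac{t_0}{t_s}\Big)^{1/2}\int_M\langle X(s)-X_s,N\rangle\, e^{-\frac{|X-X_0|^2}{2t_0}}\,d\mu
\]
is the weighted volume part, and then to differentiate each at $s=0$. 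Since all variations are compactly supported, one may differentiate under the integral sign throughout; at $s=0$ one has $X(0)=X$, $t_s=t_0$ and $N(0)=N$.

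For $\mathcal{F}_1$ I would use three elementary differentiations. First, for the normal variation field $\frac{\partial X(s)}{\partial s}\big|_{s=0}=fN$ the induced area element satisfies $\frac{\partial}{\partial s}d\mu_s\big|_{s=0}=-Hf\,d\mu$, since $\mathrm{div}_M(fN)=-fH$. Second, $\frac{\partial}{\partial s}(4\pi t_s)^{-n/2}\big|_{s=0}=-\frac{n}{2t_0}\,h\,(4\pi t_0)^{-n/2}$. Third,
\[
\frac{\partial}{\partial s}\Big(-\frac{|X(s)-X_s|^2}{2t_s}\Big)\Big|_{s=0}=-\frac{\langle X-X_0,\,fN-y\rangle}{t_0}+\frac{|X-X_0|^2}{2t_0^2}\,h .
\]
Applying the product rule to $\mathcal{F}_1$ and collecting the coefficients of $f$, $y$ and $h$ gives the contribution of $\mathcal{F}_1$: the coefficient of $f$ is $-\big(H+\langle\frac{X-X_0}{t_0},N\rangle\big)$, that of $y$ is $\langle\frac{X-X_0}{t_0},y\rangle$, and that of $h$ is $\frac{1}{2t_0}\big(\frac{|X-X_0|^2}{t_0}-n\big)$, each integrated against $e^{-|X-X_0|^2/(2t_0)}$. (This part can also be read off from the first variation of the $\mathcal{F}$-functional in \cite{[CM]} after adjusting the normalization of the Gaussian.)

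For $\mathcal{F}_2$ the key observation is that the weight $e^{-|X-X_0|^2/(2t_0)}$, the measure $d\mu$ and the vector $N$ are all independent of $s$; only $(t_0/t_s)^{1/2}$ and $\langle X(s)-X_s,N\rangle$ vary. Hence $\frac{\partial}{\partial s}(t_0/t_s)^{1/2}\big|_{s=0}=-\frac{h}{2t_0}$ and $\frac{\partial}{\partial s}\langle X(s)-X_s,N\rangle\big|_{s=0}=\langle fN-y,N\rangle=f-\langle N,y\rangle$, so that
\[
\mathcal{F}_2'(0)=(4\pi t_0)^{-n/2}\int_M\Big(f-\langle N,y\rangle-\frac{h}{2t_0}\langle X-X_0,N\rangle\Big)e^{-\frac{|X-X_0|^2}{2t_0}}\,d\mu .
\]
Adding $\mathcal{F}_1'(0)+\lambda\,\mathcal{F}_2'(0)$ and regrouping the terms according to $f$, $y$ and $h$ then produces the three lines of \eqref{eq:1000}. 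The computation is routine; the only place demanding care is the bookkeeping inside $\mathcal{F}_1$, where the three sources of $s$-dependence --- the area element $d\mu_s$, the normalizing constant $(4\pi t_s)^{-n/2}$ and the Gaussian exponent --- must be combined correctly (in particular to obtain the coefficient of $h$), and one must keep in mind that in $\mathcal{F}_2$ the vector $N$ is the fixed initial unit normal, as in the definition of the weighted volume in Section 2, so that no $\partial_s N$ term and no extra second-fundamental-form term enters there.
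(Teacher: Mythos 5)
Your proposal is correct and follows essentially the same route as the paper: the paper likewise splits $\mathcal{F}$ into the weighted area part $\mathbb{A}(s)$ and the weighted volume part $\mathbb{V}(s)$, differentiates each under the integral sign (using exactly the three sources of $s$-dependence you identify, together with $\partial_s d\mu_s=-Hf\,d\mu$ and the fixed normal $N$ in the volume term), and regroups by $f$, $y$, $h$. Your bookkeeping of the coefficients agrees with the paper's result, so there is nothing to add.
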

\vskip 3pt\noindent {\it Proof}. Defining
\begin{equation}
\mathbb{A}(s)=\int_M e^{-\frac{|X(s)-X_s|^2}{2t_s}}d\mu_s, \ \
\mathbb{V}(s)=\int_M \langle X(s)-X_s,N\rangle e^{-\frac{|X-X_0|^2}{2t_0}}d\mu,
\end{equation}
then
\begin{equation*}
\aligned
\mathcal{F}^{'}_{X_s,t_s}(s)&=(4\pi t_s)^{-\frac{n}{2}}\mathbb{A}^{'}(s)+\lambda (4\pi t_0)^{-\frac{n}{2}}(\frac{t_0}{t_s})^{\frac{1}{2}}\mathbb{V}^{'}(s)\\
&\ \ -(4\pi t_s)^{-\frac{n}{2}}\frac{n}{2t_s}h \mathbb{A}(s)-\lambda (4\pi t_0)^{-\frac{n}{2}}(\frac{t_0}{t_s})^{\frac{1}{2}}
   \frac{h}{2t_s}\mathbb{V}(s).
\endaligned
\end{equation*}
Since
\begin{equation*}
\aligned
\mathbb{A}^{'}(s)&=\int_M \biggl\{-\langle \frac{X(s)-X_s}{t_s}, \frac{\partial X(s)}{\partial s}-\frac{\partial X_s}{\partial s}\rangle  +\frac{|X(s)-X_s|^2}{2t_s^2}h\\
&\ \ \ \ \ \ \ \ \ \ -H_s\langle \frac{\partial X(s)}{\partial s}, N(s)\rangle \biggl\}
    e^{-\frac{|X(s)-X_s|^2}{2t_s}}d\mu_s,
\endaligned
\end{equation*}

\begin{equation*}
\mathbb{V}^{'}(s)=\int_M\langle \frac{\partial X(s)}{\partial s}-\frac{\partial X_s}{\partial s}, N\rangle e^{-\frac{|X-X_0|^2}{2t_0}}d\mu,
\end{equation*}
we have
\begin{equation*}
\aligned
&\ \ \ \ \mathcal{F}^{'}_{X_s,t_s}(s)\\
&=(4\pi t_s)^{-\frac{n}{2}}\int_M -(H_s+\langle \frac{X(s)-X_s}{t_s}, N(s)\rangle )fe^{-\frac{|X(s)-X_s|^2}{2t_s}}d\mu_s\\
&\ \ \ +(4\pi t_0)^{-\frac{n}{2}}\sqrt{\frac{t_0}{t_s}}\int_M \lambda f\langle N(s),N\rangle e^{-\frac{|X-X_0|^2}{2t_0}}d\mu\\
&\ \ \ +(4\pi t_s)^{-\frac{n}{2}}\int_M \langle \frac{X(s)-X_s}{t_s},y\rangle e^{-\frac{|X(s)-X_s|^2}{2t_s}}d\mu_s\\
&\ \ \ +(4\pi t_0)^{-\frac{n}{2}}\sqrt{\frac{t_0}{t_s}}\int_M \lambda \langle -y,N\rangle e^{-\frac{|X-X_0|^2}{2t_0}}d\mu\\
&\ \ \ +(4\pi t_s)^{-\frac{n}{2}}\int_M (-\frac{n}{2t_s}+\frac{|X(s)-X_s|^2}{2t_s^2})he^{-\frac{|X(s)-X_s|^2}{2t_s}}d\mu_s\\
&\ \ \ +(4\pi t_0)^{-\frac{n}{2}}\sqrt{\frac{t_0}{t_s}}\int_M -\frac{h\lambda}{2t_s}\langle X(s)-X_s,N\rangle e^{-\frac{|X-X_0|^2}{2t_0}}d\mu.
\endaligned
\end{equation*}
If $s=0$, then $X(0)=X$, $X_s=X_0$, $t_s=t_0$ and
\begin{equation*}
\aligned
&\ \ \ \ \mathcal{F}^{'}_{X_0,t_0}(0)\\
&=(4\pi t_0)^{-\frac{n}{2}}\int_M \biggl(\lambda-(H+\langle \frac{X-X_0}{t_0},N\rangle )\biggl)f e^{-\frac{|X-X_0|^2}{2}}d\mu\\
&\ \ \ +(4\pi t_0)^{-\frac{n}{2}}\int_M \biggl(\langle \frac{X-X_0}{t_0},y\rangle -\lambda \langle N,y\rangle \biggl)e^{-\frac{|X-X_0|^2}{2}}d\mu\\
&\ \ \ +(4\pi t_0)^{-\frac{n}{2}}\int_M \biggl(\frac{|X-X_0|^2}{t_0}-n-\lambda\langle X-X_0,N\rangle \biggl)\frac{h}{2t_0}e^{-\frac{|X-X_0|^2}{2}}d\mu.
\endaligned
\end{equation*}
$$\eqno{\Box}$$

\noindent
From the lemma \ref{lemma 3}, we know that if $X: M\rightarrow \mathbb{R}^{n+1}$ is a critical point of $\mathcal{F}$-functional
$\mathcal{F}_{X_s,t_s}(s)$, then
\begin{equation*}
H+\langle \frac{X-X_0}{t_0},N\rangle =\lambda.
\end{equation*}
We next prove that if $H+\langle \frac{X-X_0}{t_0},N\rangle =\lambda$, then $X: M\rightarrow \mathbb{R}^{n+1}$ must be a critical point of $\mathcal{F}$-functional $\mathcal{F}_{X_s,t_s}(s)$. For simplicity, we only consider the case of $X_0=0$ and $t_0=1$. In this case, $H+\langle \frac{X-X_0}{t_0},N\rangle =\lambda$ becomes
\begin{equation}
H+\langle X,N\rangle =\lambda.
\end{equation}

\noindent
Furthermore, we  know that $(M, X_0,t_0)$ is the critical point of the $\mathcal{F}$-functional if and only if $M$ is the critical point of $\mathcal{F}$-functional with
respect to fixed $X_0$ and $t_0$.

\begin{theorem}\label{theorem 4}
$X: M\rightarrow \mathbb{R}^{n+1}$ is a critical point of $\mathcal{F}_{X_s,t_s}(s)$ if and only if
$$
H+\langle \frac{X-X_0}{t_0},N\rangle =\lambda.
$$
\end{theorem}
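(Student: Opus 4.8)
The ``only if'' direction has already been obtained from Lemma~\ref{lemma 3}: if $X$ is a critical point of $\mathcal{F}_{X_s,t_s}(s)$, then testing the first variation formula \eqref{eq:1000} against arbitrary normal variations $f$ (with $y=0$, $h=0$) forces the Euler--Lagrange equation $H+\langle\frac{X-X_0}{t_0},N\rangle=\lambda$ pointwise, since $f$ is an arbitrary compactly supported function and the weight $e^{-|X-X_0|^2/2}$ is positive. So the plan is to prove the ``if'' direction: assuming $H+\langle X,N\rangle=\lambda$ (working, as the text reduces to, with $X_0=0$, $t_0=1$), I must show that all three terms on the right-hand side of \eqref{eq:1000} vanish, not merely the first. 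The first term vanishes identically by hypothesis, so the real content is the vanishing of the $y$-term and the $h$-term.

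For the $y$-term I would argue as follows: $\frac{\partial}{\partial s}X_s|_{s=0}=y$ is a \emph{constant} vector in $\mathbb{R}^{n+1}$ (the variations $X_s$ of the base point are translations of the center), so it suffices to show that for every constant vector $a$,
\begin{equation*}
\int_M\bigl(\langle X,a\rangle-\lambda\langle N,a\rangle\bigr)e^{-\frac{|X|^2}{2}}d\mu=0.
\end{equation*}
But this is exactly \eqref{eq:1001} of Lemma~\ref{lemma 10} (in the normalization $X_0=0$, $t_0=1$), which holds because $X$ is a $\lambda$-hypersurface; it follows from integrating the identity $\mathcal{L}\langle X,a\rangle=\lambda\langle N,a\rangle-\langle X,a\rangle$ of Lemma~\ref{lemma 0} against the Gaussian weight and using Lemma~\ref{lemma 1} (or Corollary~\ref{corollary 1} in the complete case). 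In the compact case one may invoke Lemma~\ref{lemma 1} directly with $u\equiv 1$; in the noncompact case one needs polynomial area growth, which is why that hypothesis should be carried along, or else one restricts attention to the compact setting as Colding--Minicozzi do.

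For the $h$-term, $\frac{\partial}{\partial s}t_s|_{s=0}=h$ is a scalar, so it suffices to show
\begin{equation*}
\int_M\bigl(|X|^2-n-\lambda\langle X,N\rangle\bigr)e^{-\frac{|X|^2}{2}}d\mu=0,
\end{equation*}
and this is precisely \eqref{eq:1003} of Lemma~\ref{lemma 10}, obtained by integrating $\frac12\mathcal{L}(|X|^2)=n-|X|^2+\lambda\langle X,N\rangle$ from Lemma~\ref{lemma 0} against the Gaussian weight. So the whole proof is: substitute the $\lambda$-hypersurface equation into \eqref{eq:1000}, recognize the remaining two integrands as the left-hand sides of the integral identities \eqref{eq:1001} and \eqref{eq:1003}, and conclude that $\mathcal{F}'_{X_0,t_0}(0)=0$ for all $f$, $y$, $h$; combined with the remark preceding the theorem that criticality of $(M,X_0,t_0)$ is equivalent to criticality of $M$ for fixed $X_0,t_0$ together with the base-point/time derivatives, this gives criticality of $\mathcal{F}_{X_s,t_s}(s)$.

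The main obstacle is not any single computation — each term is handled by a lemma already in hand — but rather the bookkeeping of integrability: applying Lemma~\ref{lemma 1} or Corollary~\ref{corollary 1} requires either compact support or polynomial area growth so that the boundary terms in the integration by parts genuinely vanish, and one should be careful that $\langle X,a\rangle$, $|X|^2$ and their $\mathcal{L}$-images satisfy the hypotheses of Corollary~\ref{corollary 1}. For the statement as phrased (critical points of $\mathcal{F}$), the cleanest route is to state it for compact $M$, or to add the polynomial area growth hypothesis, exactly as in the parallel results earlier in the paper; with that caveat the proof is a direct assembly of Lemmas~\ref{lemma 0},~\ref{lemma 1},~\ref{lemma 3} and~\ref{lemma 10}.
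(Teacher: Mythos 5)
Your proposal is correct and follows essentially the same route as the paper: the ``only if'' direction comes from testing the first variation formula of Lemma~\ref{lemma 3} against arbitrary normal variations, and the ``if'' direction reduces the vanishing of the $y$- and $h$-terms to the integral identities \eqref{eq:1001} and \eqref{eq:1003} of Lemma~\ref{lemma 10}. Your observation that Lemma~\ref{lemma 10} requires compactness or polynomial area growth (so that the integrations by parts behind \eqref{eq:1001} and \eqref{eq:1003} are justified) is a legitimate caveat that the paper passes over silently, but it does not alter the argument.
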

\begin{proof}
We only prove the result for $X_0=0$ and $t_0=1$. In this case, the first variation formula \eqref{eq:1000} becomes
\begin{equation}\label{eq:1004}
\aligned
\mathcal{F}^{'}_{0,1}(0)&=(4\pi)^{-\frac{n}{2}}\int_M \biggl(\lambda-(H+\langle X, N\rangle )\biggl)f e^{-\frac{|X|^2}{2}}d\mu\\
&\ \ +(4\pi)^{-\frac{n}{2}}\int_M \biggl(\langle X,y\rangle -\lambda \langle N,y\rangle \biggl)e^{-\frac{|X|^2}{2}}d\mu\\
&\ \ +(4\pi)^{-\frac{n}{2}}\int_M \biggl(|X|^2-n-\lambda\langle X,N\rangle \biggl)\frac{h}{2}e^{-\frac{|X|^2}{2}}d\mu.
\endaligned
\end{equation}
If $X: M\rightarrow \mathbb{R}^{n+1}$ is a critical point of $\mathcal{F}_{0,1}$, then $X: M\rightarrow \mathbb{R}^{n+1}$
should satisfy $H+\langle X, N\rangle =\lambda$. Conversely, if  $H+\langle X, N\rangle =\lambda$ is satisfied,
then we know that $X: M\rightarrow \mathbb{R}^{n+1}$ is a $\lambda$-hypersurface. Therefore,
the last two terms in \eqref{eq:1004} vanish for any $h$ and any $y$ from \eqref{eq:1001}
and \eqref{eq:1003} of the lemma \ref{lemma 10}. Therefore $X: M\rightarrow \mathbb{R}^{n+1}$ is a critical point of $\mathcal{F}_{0,1}$.
\end{proof}

\begin{corollary}\label{corollary 5.1}
$X: M\rightarrow \mathbb{R}^{n+1}$ is a critical point of $\mathcal{F}_{X_s,t_s}(s)$ if and only if
$M$ is the critical point of $\mathcal{F}$-functional with respect to fixed $X_0$ and $t_0$.
\end{corollary}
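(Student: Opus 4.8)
The plan is to read this off directly from Theorem \ref{theorem 4} and the first variation formula of Lemma \ref{lemma 3}, with essentially no new computation required. One implication is immediate: if $X:M\rightarrow\mathbb{R}^{n+1}$ is a critical point of $\mathcal{F}_{X_s,t_s}(s)$, then $\mathcal{F}^{'}_{X_0,t_0}(0)=0$ for \emph{all} admissible variations, and in particular for those with $y=\frac{\partial X_s}{\partial s}|_{s=0}=0$ and $h=\frac{\partial t_s}{\partial s}|_{s=0}=0$. But these are precisely the variations that keep $X_0$ and $t_0$ fixed, so $M$ is a critical point of the $\mathcal{F}$-functional with respect to fixed $X_0$ and $t_0$.

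For the converse, suppose $M$ is a critical point of the $\mathcal{F}$-functional with respect to fixed $X_0$ and $t_0$. Putting $y=0$ and $h=0$ in \eqref{eq:1000}, the last two integrals there vanish, and the condition $\mathcal{F}^{'}_{X_0,t_0}(0)=0$ for every compactly supported normal variation field $fN$ becomes
$$
(4\pi t_0)^{-\frac{n}{2}}\int_M \biggl(\lambda-\bigl(H+\langle \tfrac{X-X_0}{t_0},N\rangle\bigr)\biggr)f\, e^{-\frac{|X-X_0|^2}{2}}d\mu=0
$$
for all such $f$. Since $f$ is arbitrary, this forces $H+\langle \frac{X-X_0}{t_0},N\rangle=\lambda$, that is, $X$ is a $\lambda$-hypersurface. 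By Theorem \ref{theorem 4}, $X$ is then a critical point of the full $\mathcal{F}_{X_s,t_s}(s)$, which gives the reverse implication and completes the equivalence.

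There is essentially no obstacle. The only point worth emphasizing is that both notions of critical point are unwound through the one first variation formula \eqref{eq:1000}: the $\lambda$-hypersurface equation is the Euler--Lagrange equation attached to the $f$-variations, while the substantive content is carried by Theorem \ref{theorem 4}, namely that this single equation automatically annihilates the $y$- and $h$-variations as well, via \eqref{eq:1001} and \eqref{eq:1003} of Lemma \ref{lemma 10}.
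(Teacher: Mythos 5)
Your proof is correct and is exactly the argument the paper intends: the forward direction is a trivial restriction to variations with $y=h=0$, and the converse is read off from the first variation formula \eqref{eq:1000} (the $f$-term forces the $\lambda$-hypersurface equation) combined with Theorem \ref{theorem 4}, which shows that equation already kills the $y$- and $h$-terms via \eqref{eq:1001} and \eqref{eq:1003}. The paper leaves this proof implicit, so your write-up simply makes explicit the same route.
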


\vskip 5mm

\section{The second variation of $\mathcal{F}$-functional}

\noindent
In this section, we shall give the second variation formula of $\mathcal{F}$-functional.
\begin{theorem}\label{theorem 5}
Let $X: M\rightarrow \mathbb{R}^{n+1}$ be a critical point of the functional $\mathcal{F}(s)=\mathcal{F}_{X_s,t_s}(s)$.  The second variation formula of $\mathcal{F}(s)$ for $X_0=0$ and $t_0=1$ is given by

\begin{equation*}
\aligned
 &\ \ \ \ (4\pi)^{\frac{n}{2}}\mathcal{F}^{''}(0)\\
 &=-\int_M fLf e^{-\frac{|X|^2}{2}}d\mu+\int_M \bigl(-|y|^2+\langle X,y\rangle ^2\bigl)e^{-\frac{|X|^2}{2}}d\mu\\
&\ \ \ +\int_M \biggl\{2\langle N,y\rangle +(n+1-|X|^2)\lambda h-2hH-2\lambda\langle X,y\rangle \biggl\}f e^{-\frac{|X|^2}{2}}d\mu\\
&\ \ \ +\int_M \biggl\{\lambda\langle N,y\rangle -(n+2)\langle X,y\rangle +\langle X,y\rangle |X|^2\biggl\}h e^{-\frac{|X|^2}{2}}d\mu\\
&\ \ \ +\int_M \biggl\{\frac{n^2+2n}{4}-\frac{n+2}{2}|X|^2+\frac{|X|^4}{4}+\frac{3\lambda}{4}(\lambda-H)\biggl\}h^2 e^{-\frac{|X|^2}{2}}d\mu,
\endaligned
\end{equation*}
where the operator $L$ is defined by
$$L=\mathcal{L}+S+1-\lambda^2.$$
\end{theorem}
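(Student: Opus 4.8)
The plan is to compute $\mathcal{F}''(0)$ by differentiating the first variation formula \eqref{eq:1000} once more in $s$ and then evaluating at $s=0$, using the critical point equation $H+\langle X,N\rangle=\lambda$ (with $X_0=0$, $t_0=1$) throughout to discard the first-order terms that survive only off criticality. Concretely, I would write $\mathcal{F}'_{X_s,t_s}(s)$ as the sum of three integrals $I_1(s)+I_2(s)+I_3(s)$ coming respectively from the $f$-term, the $y$-term, and the $h$-term in Lemma \ref{lemma 3}, and then differentiate each. Since at $s=0$ the coefficient $\lambda-(H+\langle X,N\rangle)$ in $I_1$ vanishes, only the derivative of that coefficient (not of $fe^{-|X|^2/2}d\mu$) contributes from $I_1$; similarly \eqref{eq:1001} and \eqref{eq:1003} kill the analogous boundary contributions in $I_2$ and $I_3$, so again only the derivatives of the bracketed coefficients matter there. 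This is the standard Colding--Minicozzi device and it is what makes the second variation depend only on $(f,y,h)$ and not on the second-order data $\frac{\partial^2 X}{\partial s^2}$, $\frac{\partial^2 X_s}{\partial s^2}$, $\frac{\partial^2 t_s}{\partial s^2}$.

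The key computational inputs I would assemble first are the $s$-derivatives at $s=0$ of the geometric quantities appearing in the coefficients: $\frac{\partial}{\partial s}H(s) = -\Delta f - S f$ (the standard first variation of the mean curvature under a normal variation $fN$), $\frac{\partial}{\partial s}N(s) = -\nabla f$, $\frac{\partial}{\partial s}\langle X(s)-X_s,N(s)\rangle$, $\frac{\partial}{\partial s}\langle X(s)-X_s,N\rangle = f\langle N,N\rangle - \langle y,N\rangle$, $\frac{\partial}{\partial s}\langle \tfrac{X(s)-X_s}{t_s},N(s)\rangle$, and $\frac{\partial}{\partial s}(4\pi t_s)^{-n/2}$, $\frac{\partial}{\partial s}\sqrt{t_0/t_s}$, together with the area-element variation $\frac{\partial}{\partial s}d\mu_s = -Hf\,d\mu$. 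Substituting $H=\lambda-\langle X,N\rangle$ wherever $H$ appears in a coefficient being differentiated, and recognizing $\Delta f-\langle X,\nabla f\rangle=\mathcal{L}f$, I expect the $f$-against-$f$ part to collapse to $-\int_M f(\mathcal{L}f+Sf+f-\lambda^2 f)e^{-|X|^2/2}d\mu = -\int_M fLf\,e^{-|X|^2/2}d\mu$; the term $-\lambda^2$ in $L$ should emerge from carefully combining the $\lambda\langle N,a\rangle$ contributions (using $\mathcal{L}\langle X,a\rangle=\lambda\langle N,a\rangle-\langle X,a\rangle$ and $\mathcal{L}\langle N,a\rangle=-S\langle N,a\rangle$ from Lemma \ref{lemma 0}, now with $a$ replaced by the variation direction). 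The mixed $f$-$y$, $f$-$h$, $y$-$h$ terms and the pure $y^2$ and $h^2$ terms are then read off by collecting the remaining pieces, using \eqref{eq:1001}--\eqref{eq:1012} of Lemma \ref{lemma 10} to put everything in the stated normalized form (in particular the $h^2$-coefficient $\frac{n^2+2n}{4}-\frac{n+2}{2}|X|^2+\frac{|X|^4}{4}+\frac{3\lambda}{4}(\lambda-H)$ should come out after invoking \eqref{eq:1012}, which is exactly why that identity was proved).

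The main obstacle is bookkeeping: there are three variation parameters entangled through the weight $e^{-|X(s)-X_s|^2/2t_s}$ and the prefactors $(4\pi t_s)^{-n/2}$ and $(t_0/t_s)^{1/2}$, so a naive expansion produces a long list of cross terms, many of which must be integrated by parts (via Lemma \ref{lemma 1} / Corollary \ref{corollary 1}) and several of which cancel only after using the $\lambda$-hypersurface identities. I would organize the calculation by first handling the ``$A$-part'' (the weighted area functional, whose second variation mirrors Colding--Minicozzi's self-shrinker computation but with $H+\langle X,N\rangle=\lambda$ in place of $H+\langle X,N\rangle=0$, which is precisely where the $-\lambda^2$ and the $\lambda$-linear cross terms enter), then the ``$V$-part'' (the weighted volume, which is linear in $X(s)-X_s$ so its second variation is comparatively short), and finally the prefactor derivatives, adding the three contributions. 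The only genuinely delicate point is making sure that the terms proportional to $\lambda$ times second-order variational data truly cancel; this cancellation is forced by Lemma \ref{lemma 10} (equations \eqref{eq:1001} and \eqref{eq:1003}), exactly as the vanishing of the last two terms of \eqref{eq:1004} was used in the proof of Theorem \ref{theorem 4}, so I would invoke those identities at the corresponding step rather than re-deriving them.
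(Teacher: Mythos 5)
Your strategy is the same as the paper's: differentiate the general-$s$ first variation formula once more, use the pointwise criticality $H+\langle X,N\rangle=\lambda$ together with the integral identities \eqref{eq:1001} and \eqref{eq:1003} to kill every term carrying second-order variational data ($f'$, $y'$, $h'$), substitute $H'=\Delta f+Sf$, $N'=-\nabla f$, $d\mu_s'=-Hf\,d\mu$ and the prefactor derivatives, and collect. Two caveats before you execute it. First, the sign of $H'$: with this paper's orientation convention ($\mathbf{H}=HN$, $H=\sum_j h_{jj}$, and $\langle X,N\rangle+H=\lambda$) the correct formula is $H'=\Delta f+Sf$, not $-\Delta f-Sf$; the latter is the Colding--Minicozzi convention ($H=\operatorname{div}N$), and if you use it here the principal term comes out as $+\int_M fLf$ rather than $-\int_M fLf$. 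Second, your claim that from the $f$-term ``only the derivative of the coefficient contributes'' is literally true only for discarding the $\tfrac{\partial^2 X}{\partial s^2}$ contribution: the $f$-part of $\mathcal{F}'(s)$ is genuinely two integrals, $-(H_s+\langle \tfrac{X(s)-X_s}{t_s},N(s)\rangle)f$ against the moving weight $(4\pi t_s)^{-n/2}e^{-|X(s)-X_s|^2/2t_s}d\mu_s$ and $+\lambda f\langle N(s),N\rangle$ against the fixed weight $(4\pi t_0)^{-n/2}\sqrt{t_0/t_s}\,e^{-|X-X_0|^2/2t_0}d\mu$, whose coefficients equal $-\lambda$ and $+\lambda$ separately (not $0$) at $s=0$; the derivatives of the two different weights do not cancel, and this is exactly where the cross terms $2\langle N,y\rangle f$, $-2hHf$, $(n+1-|X|^2)\lambda hf$, $-2\lambda\langle X,y\rangle f$ come from, so they must be kept. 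Finally, \eqref{eq:1012} is not needed: the $h^2$ coefficient in the statement is a pointwise expression obtained directly from expanding the prefactor and weight derivatives, not an integrated identity.
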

\begin{proof}
\begin{equation*}
\aligned
&\ \ \mathcal{F}^{''}(s)\\
&=(4\pi t_s)^{-\frac{n}{2}}\int_M -(H_s+\langle \frac{X(s)-X_s}{t_s}, N(s)\rangle )f^{'}e^{-\frac{|X(s)-X_s|^2}{2t_s}}d\mu_s\\
&\ \ \ +(4\pi t_0)^{-\frac{n}{2}}\sqrt{\frac{t_0}{t_s}}\int_M \lambda f^{'}\langle N(s),N\rangle e^{-\frac{|X-X_0|^2}{2t_0}}d\mu\\
&\ \ \ +(4\pi t_s)^{-\frac{n}{2}}\int_M \frac{nh}{2t_s}(H_s+\langle \frac{X(s)-X_s}{t_s},N(s)\rangle fe^{-\frac{|X(s)-X_s|^2}{2t_s}}d\mu_s\\
&\ \ \ +(4\pi t_0)^{-\frac{n}{2}}\sqrt{\frac{t_0}{t_s}}\int_M -\frac{h}{2t_s}\lambda \langle N(s),N\rangle fe^{-\frac{|X-X_0|^2}{2t_0}}d\mu\\
&\ \ \ +(4\pi t_s)^{-\frac{n}{2}}\int_M (H_s+\langle \frac{X(s)-X_s}{t_s},N(s)\rangle )\times\\
&\ \ \ \ \ \ \ \ \ \ \ \ \ \ \ \ \ \ \ \ \ \ \ (\langle \frac{X(s)-X_s}{t_s},
   \frac{\partial X(s)}{\partial s}-\frac{\partial X_s}{\partial s}\rangle +H_sf)fe^{-\frac{|X(s)-X_s|^2}{2t_s}}d\mu_s\\
   & \ \   +(4\pi t_s)^{-\frac{n}{2}}\int_M -\biggl(\frac{dH_s}{ds}
   +\langle \frac{\frac{\partial X(s)}{\partial s}-\frac{\partial X_s}{\partial s}}{t_s},N(s)\rangle -\langle \frac{X(s)-X_s}{t_s^2},N(s)\rangle h\\
   &\ \ \ \ \ \ \ \ \ \ \ \ \ \ \ \ \ \ \ \ \ \ \ \ \   +\langle \frac{X(s)-X_s}{t_s},\frac{dN(s)}{ds}\rangle \biggl)fe^{-\frac{|X(s)-X_s|^2}{2t_s}}d\mu_s\\
&  \ \   +(4\pi t_0)^{-\frac{n}{2}}\sqrt{\frac{t_0}{t_s}}\int_M \lambda f\langle \frac{dN(s)}{ds},N\rangle e^{-\frac{|X-X_0|^2}{2t_0}}d\mu\\
& \ \   +(4\pi t_s)^{-\frac{n}{2}}\int_M\langle \frac{X(s)-X_s}{t_s},y^{'}\rangle e^{-\frac{|X(s)-X_s|^2}{2t_s}}d\mu_s\\
& \ \   +(4\pi t_0)^{-\frac{n}{2}}\sqrt{\frac{t_0}{t_s}}\int_M -\lambda\langle N, y^{'}\rangle e^{-\frac{|X-X_0|^2}{2t_0}}d\mu\\
&  \ \ +(4\pi t_s)^{-\frac{n}{2}}(-\frac{nh}{2t_s})\int_M\langle \frac{X(s)-X_s}{t_s},y\rangle e^{-\frac{|X(s)-X_s|^2}{2t_s}}d\mu_s\\
& \ \   +(4\pi t_0)^{-\frac{n}{2}}\sqrt{\frac{t_0}{t_s}}(-\frac{h}{2t_s})\int_M -\lambda\langle N,y\rangle e^{-\frac{|X-X_0|^2}{2t_0}}d\mu\\
&  \ \  +(4\pi t_s)^{-\frac{n}{2}}\int_M (\langle \frac{\frac{\partial X(s)}{\partial s}-\frac{\partial X_s}{\partial s}}{t_s},y\rangle
  -\langle \frac{X(s)-X_s}{t_s^2},y\rangle h)e^{-\frac{|X(s)-X_s|^2}{2t_s}}d\mu_s\\
& \ \ +(4\pi t_s)^{-\frac{n}{2}}\int_M \langle \frac{X(s)-X_s}{t_s},y\rangle \biggl(-\langle \frac{X(s)-X_s}{t_s},
  \frac{\partial X(s)}{\partial s}-\frac{\partial X_s}{\partial s}\rangle\
-H_sf\biggl)e^{-\frac{|X(s)-X_s|^2}{2t_s}}d\mu_s\\
& \ \   +(4\pi t_s)^{-\frac{n}{2}}\int_M(-\frac{n}{2t_s}+\frac{|X(s)-X_s|^2}{2t_s^2})h^{'}e^{-\frac{|X(s)-X_s|^2}{2t_s}}d\mu_s\\
&  \ \ +(4\pi t_0)^{-\frac{n}{2}}\sqrt{\frac{t_0}{t_s}}\int_M -\frac{h^{'}\lambda}{2t_s}\langle X(s)-X_s,N\rangle e^{-\frac{|X-X_0|^2}{2t_0}}d\mu
\\
\endaligned
\end{equation*}
\begin{equation*}
\aligned
&\ \ \ +(4\pi t_s)^{-\frac{n}{2}}(-\frac{nh}{2t_s})\int_M (-\frac{n}{2t_s}+\frac{|X(s)-X_s|^2}{2t_s^2})he^{-\frac{|X(s)-X_s|^2}{2t_s}}d\mu_s\ \ \ \ \ \ \ \ \ \\
&\ \ \ +(4\pi t_0)^{-\frac{n}{2}}\sqrt{\frac{t_0}{t_s}}(-\frac{h}{2t_s})
   \int_M -\frac{h}{2t_s}\lambda \langle X(s)-X_s, N\rangle e^{-\frac{|X-X_0|^2}{2t_0}}d\mu
   \\
&\ \ \ +(4\pi t_s)^{-\frac{n}{2}}\int_M (\frac{nh}{2t_s^2}-\frac{|X(s)-X_s|^2}{t_s^3}h
+\frac{\langle X(s)-X_s,\frac{\partial X(s)}{\partial s}-\frac{\partial X_s}{\partial s}\rangle }{t_s^2})\times\\
&\ \ \ \ \ \ \ \ \ \ \ \ \ \ \ \ \ \ \ \ \ \ \ \ \ he^{-\frac{|X(s)-X_s|^2}{2t_s}}d\mu_s\\
&\ \ \ +(4\pi t_0)^{-\frac{n}{2}}\sqrt{\frac{t_0}{t_s}}\int_M (\frac{h}{2t_s^2}\langle X(s)-X_s,N\rangle \lambda h\\
&\ \ \ \ \ \ \ \ \ \ \ \ \ \ \ \ \ \ \ \ \ \ \ \ \ \ \ \ \
 -\frac{1}{2t_s}\langle \frac{\partial X(s)}{\partial s}-\frac{\partial X_s}{\partial s},N\rangle \lambda h)e^{-\frac{|X-X_0|^2}{2t_0}}d\mu\\
&\ \ \ +(4\pi t_s)^{-\frac{n}{2}}\int_M (-\frac{n}{2t_s}+\frac{|X(s)-X_s|^2}{2t_s^2})h(-H_sf\\
&\ \ \ \ \ \ \ \ \ \ \ \ \ \ \ \ \ \ \ \ \ \ \ \
  -\langle \frac{X(s)-X_s}{t_s},\frac{\partial X(s)}{\partial s}-\frac{\partial X_s}{\partial s}\rangle )e^{-\frac{|X(s)-X_s|^2}{2t_s}}d\mu_s\\
&\ \ \ +(4\pi t_s)^{-\frac{n}{2}}\int_M-(H_s+\langle \frac{X(s)-X_s}{t_s},N(s)\rangle )f\frac{|X(s)-X_s|^2}{2t_s^2}h\\
&\ \ \ \ \ \ \ \ \ \ \ \ \ \ \ \ \ \ \ \ \ \ \ \ \ \ \times e^{-\frac{|X(s)-X_s|^2}{2t_s}}d\mu_s\\
&\ \ \ +(4\pi t_s)^{-\frac{n}{2}}\int_M \langle \frac{X(s)-X_s}{t_s},y\rangle \frac{|X(s)-X_s|^2}{2t_s^2}he^{-\frac{|X(s)-X_s|^2}{2t_s}}d\mu_s\\
&\ \ \ +(4\pi t_s)^{-\frac{n}{2}}\int_M (-\frac{n}{2t_s}+\frac{|X(s)-X_s|^2}{2t_s^2})h
 \frac{|X(s)-X_s|^2}{2t_s^2}he^{-\frac{|X(s)-X_s|^2}{2t_s}}d\mu_s.
\endaligned
\end{equation*}

Since $X: M\rightarrow \mathbb{R}^{n+1}$ is a critical point, we get

\begin{equation*}
H+\langle \frac{X-X_0}{t_0},N\rangle =\lambda,
\end{equation*}

\begin{equation*}
\int_M (n+\lambda\langle X-X_0,N\rangle -\frac{|X-X_0|^2}{t_0})e^{-\frac{|X-X_0|^2}{2t_0}}d\mu=0,
\end{equation*}

\begin{equation*}
\int_M (\lambda\langle N,a\rangle -\langle \frac{X-X_0}{t_0},a\rangle )e^{-\frac{|X-X_0|^2}{2t_0}}d\mu=0.
\end{equation*}
On the other hand,
\begin{equation*}
H^{'}=\Delta f+Sf,\ \ N^{'}=-\nabla f.
\end{equation*}
Using of the above equations and letting $s=0$, we obtain
\begin{equation*}
\aligned
&\ \ \ \ (4\pi t_0)^{\frac{n}{2}}\mathcal{F}^{''}(0)\\
 &=\int_M -fLf e^{-\frac{|X-X_0|^2}{2t_0}}d\mu\\
&\ \ \ +\int_M (\frac{2}{t_0}\langle N,y\rangle +\frac{2h}{t_0}\langle \frac{X-X_0}{t_0},N\rangle +\frac{n-1}{t_0}\lambda h\\
 &\ \ \ \ \ \ \ \ \ \ \ \ -\frac{|X-X_0|^2}{t_0^2}\lambda h-2\lambda\langle \frac{X-X_0}{t_0},y\rangle )fe^{-\frac{|X-X_0|^2}{2t_0}}d\mu\\
&\ \ \ +\int_M (-\frac{n+2}{t_0}\langle \frac{X-X_0}{t_0},y\rangle +\frac{\lambda}{t_0}\langle N,y\rangle \\
&\ \ \ \ \ \ \ \ \ \ \ \ +\langle \frac{X-X_0}{t_0},y\rangle
  \frac{|X-X_0|^2}{t_0^2})he^{-\frac{|X-X_0|^2}{2t_0}}d\mu\\
&\ \ \ +\int_M (\frac{n^2}{4t_0^2}+\frac{n}{2t_0^2}-\frac{n+2}{2t_0^3}|X-X_0|^2+\frac{|X-X_0|^4}{4t_0^4}\\
 &\ \ \ \ \ \ \ \ \ \ +\frac{3\lambda}{4t_0}\langle \frac{X-X_0}{t_0},N\rangle )h^2e^{-\frac{|X-X_0|^2}{2t_0}}d\mu\\
&\ \ \ +\int_M (-\frac{1}{t_0}\langle y,y\rangle +\langle \frac{X-X_0}{t_0},y\rangle ^2)e^{-\frac{|X-X_0|^2}{2t_0}}d\mu,
\endaligned
\end{equation*}
where the operator $L$ is defined by $L=\Delta+S+\frac{1}{t_0}-\langle \frac{X-X_0}{t_0},\nabla\rangle -\lambda^2$.
When $t_0=1$, $X_0=0$, then $L=\mathcal{L}+S+1-\lambda^2$.
\begin{equation*}
\aligned
&\ \ \ \ (4\pi)^{\frac{n}{2}}\mathcal{F}^{''}(0)\\
&=\int_M -fLf e^{-\frac{|X-|^2}{2}}d\mu\\
&\ \ \ +\int_M (2\langle N,y\rangle +2\lambda h+(n-1)\lambda h-2hH\\
&\ \ \ \ \ \ \ \ \ \ \ -|X|^2\lambda h-2\lambda\langle X,y\rangle )f e^{-\frac{|X|^2}{2}}d\mu\\
&\ \ \ +\int_M (\lambda\langle N,y\rangle -(n+2)\langle X,y\rangle +\langle X,y\rangle |X|^2)h e^{-\frac{|X|^2}{2}}d\mu\\
&\ \ \ +\int_M (\frac{n^2+2n}{4}-\frac{n+2}{2}|X|^2+\frac{|X|^4}{4}+\frac{3\lambda}{4}\langle X,N\rangle )h^2 e^{-\frac{|X|^2}{2}}d\mu\\
&\ \ \ +\int_M -(|y|^2-\langle X,y\rangle ^2)e^{-\frac{|X|^2}{2}}d\mu\\
\endaligned
\end{equation*}

\begin{equation*}
\aligned
&=\int_M -fLf e^{-\frac{|X|^2}{2}}d\mu\\
&\ \ \ +\int_M [2\langle N,y\rangle +(n+1-|X|^2)\lambda h-2hH-2\lambda\langle X,y\rangle ]f e^{-\frac{|X-|^2}{2}}d\mu\\
&\ \ \ +\int_M (\lambda\langle N,y\rangle -(n+2)\langle X,y\rangle +\langle X,y\rangle |X|^2)h e^{-\frac{|X|^2}{2}}d\mu\\
&\ \ \ +\int_M (\frac{n^2+2n}{4}-\frac{n+2}{2}|X|^2+\frac{|X|^4}{4}+\frac{3\lambda}{4}(\lambda-H))h^2 e^{-\frac{|X|^2}{2}}d\mu\\
&\ \ \ +\int_M (-|y|^2+\langle X,y\rangle ^2)e^{-\frac{|X|^2}{2}}d\mu.
\endaligned
\end{equation*}
\end{proof}

\begin{definition}
One calls that a critical point $X: M\rightarrow \mathbb{R}^{n+1}$ of the $\mathcal{F}$-functional $\mathcal{F}_{X_s,t_s}(s)$
is $\mathcal{F}$-stable if, for every
normal variation $fN$, there exist variations of $X_0$ and $t_0$ such that  $\mathcal{F^{\prime\prime}}_{X_0,t_0}(0)\geq 0$;

\noindent
One calls that a critical point $X: M\rightarrow \mathbb{R}^{n+1}$ of the $\mathcal{F}$-functional $\mathcal{F}_{X_s,t_s}(s)$
is $\mathcal{F}$-unstable if there exist a
normal variation $fN$ such that for all variations of $X_0$ and $t_0$,   $\mathcal{F^{\prime\prime}}_{X_0,t_0}(0)< 0$.
\end{definition}

\begin{theorem}\label{theorem 6.2}
If  $r\leq \sqrt{n}$ or  $r>\sqrt{n+1}$, the $n$-dimensional round sphere $X: {S}^n(r)\rightarrow \mathbb{R}^{n+1}$
is  $\mathcal{F}$-stable;
If  $ \sqrt{n}< r\leq\sqrt{n+1}$,  the $n$-dimensional round sphere $X: {S}^n(r)\rightarrow \mathbb{R}^{n+1}$
is  $\mathcal{F}$-unstable.
\end{theorem}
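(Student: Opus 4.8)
The plan is to specialize the second variation formula of Theorem~\ref{theorem 5} to $X\colon S^n(r)\to\mathbb{R}^{n+1}$ with $X_0=0$ and $t_0=1$, and then to diagonalize the resulting quadratic form. On $S^n(r)$, with the inward unit normal $N=-X/r$, one has $|X|^2=r^2$, $H=n/r$, $S=n/r^2$, $\langle X,N\rangle=-r$, $\lambda=n/r-r$, and, because the position vector is everywhere normal, $\mathcal{L}=\Delta$ and the weight $e^{-|X|^2/2}$ is the constant $e^{-r^2/2}$; by Theorem~\ref{theorem 4} the sphere is a critical point of $\mathcal{F}_{0,1}$, so Theorem~\ref{theorem 5} applies. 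Substituting $\langle X,\cdot\rangle=-r\langle N,\cdot\rangle$ and using the elementary identities $\int_{S^n(r)}\langle N,a\rangle\,d\mu=0$ and $\int_{S^n(r)}\langle N,a\rangle\langle N,b\rangle\,d\mu=\frac{\langle a,b\rangle}{n+1}{\rm Area}(S^n(r))$, the five groups of terms in $\mathcal{F}''(0)$ collapse to expressions with constant coefficients in the slots $f$, $y$, $h$ (the term carrying $\langle X,y\rangle h$ drops out entirely).

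I would then decompose a normal variation as $f=\alpha+\langle N,b\rangle+f^{\perp}$, where $\alpha\in\mathbb{R}$ is the mean of $f$, $b\in\mathbb{R}^{n+1}$, and $f^{\perp}$ is $L^2(d\mu)$-orthogonal both to the constants and to all restrictions of linear functions. By Lemma~\ref{lemma 0}, equation~\eqref{eq:2002}, we have $\mathcal{L}\langle N,a\rangle=-S\langle N,a\rangle$, hence $L\langle N,a\rangle=(1-\lambda^2)\langle N,a\rangle$, while $L1=S+1-\lambda^2=:\nu_0$; and on the orthocomplement of these two eigenspaces $L=\Delta+(S+1-\lambda^2)$ has only eigenvalues $\nu_k=-\frac{k(k+n-1)}{r^2}+S+1-\lambda^2$ with $k\geq 2$, each negative because $\nu_2<0$ reduces to $(r^2-n)^2-(r^2-n)+2>0$, which always holds. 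Consequently $\widetilde P:=-\int_M f^{\perp}Lf^{\perp}e^{-|X|^2/2}d\mu\geq 0$, and collecting terms one arrives at the normal form
\begin{equation*}
(4\pi)^{n/2}\mathcal{F}''(0)=\widetilde P+A_0\Bigl\{\bigl(Dh^2+c_h\,\alpha h-\nu_0\alpha^2\bigr)+\frac{1}{n+1}\bigl(p\,|y-b|^2-(p+\nu_1)|b|^2\bigr)\Bigr\},
\end{equation*}
where $A_0=e^{-r^2/2}{\rm Area}(S^n(r))$, $\nu_1=1-\lambda^2$, $p=r^2-n-1$, $D=\frac14\bigl((r^2-n)^2-r^2-n\bigr)$, $c_h=(n+1-r^2)\lambda-\frac{2n}{r}$, and the two bracketed groups involve the disjoint variable pairs $(\alpha,h)$ and $(b,y)$. (Completing the square in the $(b,y)$-group uses $2+2\lambda r=-2p$.)

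The dichotomy is then governed by four identities for the sphere: $2+2\lambda r=-2p$, $c_h=4D/r$, $c_h+r\nu_0=0$, and $p+\nu_1=\frac{n(r^2-n)}{r^2}$. The first three force $\sup_h\bigl(Dh^2+c_h\alpha h-\nu_0\alpha^2\bigr)$ to equal $0$ when $D<0$ (its vertex value is $\alpha^2(-\nu_0-c_h^2/4D)=0$), to equal $+\infty$ when $D>0$, and, when $D=0$, they give $c_h=\nu_0=0$ so the group vanishes; thus the $(\alpha,h)$-group can always be made nonnegative and never obstructs stability. The last identity gives $\sup_y\frac{1}{n+1}\bigl(p|y-b|^2-(p+\nu_1)|b|^2\bigr)$ equal to $+\infty$ if $r^2>n+1$, to $\frac{-n(r^2-n)}{(n+1)r^2}|b|^2$ (attained at $y=b$) if $r^2<n+1$, and to $-\frac{n}{(n+1)^2}|b|^2$ if $r^2=n+1$. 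Hence for any $f$ one can choose $h,y$ with $(4\pi)^{n/2}\mathcal{F}''(0)\geq 0$ exactly when $r^2\leq n$ or $r^2>n+1$, which is $\mathcal{F}$-stability; and when $n<r^2\leq n+1$ (so $p\leq 0$, $p+\nu_1>0$, $D<0$), the choice $f=\langle N,b\rangle$ with $b\neq 0$ (so $\alpha=0$, $f^{\perp}=0$, $\widetilde P=0$) makes $(4\pi)^{n/2}\mathcal{F}''(0)=A_0\bigl(Dh^2+\frac{1}{n+1}(p|y-b|^2-(p+\nu_1)|b|^2)\bigr)<0$ for every $y$ and $h$, which is $\mathcal{F}$-instability.

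The only real difficulty is computational: carrying the long formula of Theorem~\ref{theorem 5} down to the displayed normal form and verifying the four sphere identities without sign errors. Once that is in hand, the statement reduces to bookkeeping on the signs of $r^2-n$ and $r^2-(n+1)$.
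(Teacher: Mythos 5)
Your proposal is correct and follows essentially the same route as the paper: decompose $f$ into the constant mode, the first spherical-harmonic mode $\langle N,b\rangle$, and the higher modes, use the eigenvalues of $L$ on $S^n(r)$, and neutralize the $(\alpha,h)$-block via $h=-2\alpha/r$ (the paper's term $[r^4-(2n+1)r^2+n(n-1)](\frac{a}{r}+\frac{h}{2})^2$ is exactly your $D(h+2\alpha/r)^2$). The only difference is cosmetic: you complete the square in $y$ over all of $\mathbb{R}^{n+1}$ and read off the supremum, whereas the paper sets $y=kz$ and splits into the three cases $r\leq\sqrt{n}$, $r\geq\frac{1+\sqrt{1+4n}}{2}$, and $\sqrt{n+1}<r<\frac{1+\sqrt{1+4n}}{2}$; the instability argument (take $f=\langle N,b\rangle$ so that all remaining terms are nonpositive and one is strictly negative for every $h,y$) is identical.
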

\begin{proof}
For the sphere ${S}^n(r)$, we have
$$
X=-rN,\ \ H=\frac{n}{r},\ \ S=\frac{H^2}{n}=\frac{n}{r^2},\ \ \lambda=H-r=\frac{n}{r}-r
$$
and
\begin{equation}
L f=\mathcal{L}f+(S+1-\lambda^2)f=\Delta f+(\frac{n}{r^2}+1-\lambda^2)f.
\end{equation}
Since we know that  eigenvalues $\mu_k$ of $\Delta$ on the sphere ${S}^n(r)$ are given by
\begin{equation}
\mu_k=\frac{k^2+(n-1)k}{r^2},
\end{equation}
and  constant functions are eigenfunctions corresponding to eigenvalue $\mu_0=0$.
For any constant vector $z\in \mathbb{R}^{n+1}$, we get
\begin{equation}
-\Delta\langle z,N\rangle =\Delta\langle z,\frac{X}{r}\rangle =\langle z,\frac{1}{r}HN\rangle =\frac{n}{r^2}\langle z,N\rangle ,
\end{equation}
that is, $\langle z,N\rangle $ is an eigenfunction of $\Delta$ corresponding to the first eigenvalue $\mu_1=\frac{n}{r^2}$.
Hence, for any  normal variation with the variation vector field $fN$,
we can choose a real number $a\in \mathbb{R}$ and a constant vector $z\in \mathbb{R}^{n+1}$ such that
\begin{equation}
f=f_0+a+\langle z,N\rangle ,
\end{equation}
and  $f_0$ is in the space spanned by all eigenfunctions corresponding to eigenvalues $\mu_k$ $(k\geq2)$ of $\Delta$ on ${S}^n(r)$.
Using the lemma \ref{lemma 10}, we get
\begin{equation}\label{eq:500}
\aligned
&\ \ \ \ (4\pi)^{\frac{n}{2}}e^{\frac{r^2}{2}}\mathcal{F}^{''}(0)\\
&=\int_{S^n(r)} -(f_0+a+\langle z,N\rangle )L(f_0+a+\langle z,N\rangle ) d\mu\\
&\ \ \ +\int_{S^n(r)} [2\langle N,y\rangle +(n+1-r^2)\lambda h-2\frac{n}{r}h+2\lambda\langle rN,y\rangle ] (f_0+a+\langle z,N\rangle ) d\mu\\
&\ \ \ +\int_{S^n(r)} (-r)\langle N,y\rangle (r^2-n-1)h d\mu\\
&\ \ \ +\int_{S^n(r)} (\frac{n^2+2n}{4}-\frac{n+2}{2}r^2+\frac{r^4}{4}+\frac{3}{4}r^2-\frac{3}{4}n)h^2 d\mu\\
&\ \ \ +\int_{S^n(r)} (-|y|^2+\langle X,y\rangle ^2)d\mu\\
&\geq\int_{S^n(r)} \biggl\{(\frac{n+2}{r^2}-1+\lambda^2)f_0^2-(\frac{n}{r^2}+1-\lambda^2)a^2+(\lambda^2-1)\langle z,N\rangle ^2\biggl\}d\mu\\
&\ \ \ +\int_{S^n(r)}\biggl\{2(1+\lambda r)\langle N,y\rangle \langle N,z\rangle +[(n+1-r^2)\lambda-2\frac{n}{r}]ah\biggl\}d\mu\\
&\ \ \ +\int_{S^n(r)}\frac{1}{4}[r^4-(2n+1)r^2+n(n-1)]h^2d\mu\\
&\ \ \ +\int_{S^n(r)}(-|y|^2+\langle X,y\rangle ^2)d\mu.
\endaligned
\end{equation}
From the lemma \ref{lemma 10}, we have
\begin{equation}\label{eq:501}
\int_{S^n(r)}(-|y|^2+\langle X,y\rangle ^2)d\mu=-\int_{S^n(r)} (1+\lambda r)\langle N,y\rangle ^2d\mu.
\end{equation}
Putting \eqref{eq:501} and $\lambda=\frac{n}{r}-r$ into \eqref{eq:500}, we obtain

\begin{equation}\label{eq:502}
\aligned
&\ \ \ \ (4\pi)^{\frac{n}{2}}e^{\frac{r^2}{2}}\mathcal{F}^{''}(0)\\
&\geq\int_{S^n(r)} \frac{1}{r^2}\biggl\{(r^2-n-\frac{1}{2})^2+\frac{7}{4}\biggl\}f_0^2d\mu\\
&\ \ \ +\int_{S^n(r)}[r^4-(2n+1)r^2+n(n-1)](\frac{a}{r}+\frac{h}{2})^2d\mu\\
&\ \ \ +\int_{S^n(r)}\frac{1}{r^2}[r^4-(2n+1)r^2+n^2]\langle z,N\rangle ^2d\mu\\
&\ \ \ +\int_{S^n(r)} 2(1+n-r^2)\langle N,y\rangle \langle N,z\rangle d\mu\\
&\ \ \ +\int_{S^n(r)} -(1+n-r^2)\langle N,y\rangle ^2d\mu.
\endaligned
\end{equation}
 If we choose $h=-\frac{2a}{r}$, then we have

\begin{equation}\label{eq:503}
\aligned
&\ \ \ \ (4\pi)^{\frac{n}{2}}e^{\frac{r^2}{2}}\mathcal{F}^{''}(0)\\
&\geq\int_{S^n(r)} \frac{1}{r^2}\biggl\{(r^2-n-\frac{1}{2})^2+\frac{7}{4}\biggl\}f_0^2d\mu\\
&\ \ \ +\int_{S^n(r)} (\lambda^2-1)\langle z,N\rangle ^2d\mu\\
&\ \ \ +\int_{S^n(r)} 2(1+\lambda r)\langle N,y\rangle \langle N,z\rangle d\mu\\
&\ \ \ +\int_{S^n(r)} -(1+\lambda r)\langle N,y\rangle ^2d\mu.
\endaligned
\end{equation}
Let $y=kz$, then we have

\begin{equation}\label{eq:504}
\aligned
&\ \ \ \ (4\pi)^{\frac{n}{2}}e^{\frac{r^2}{2}}\mathcal{F}^{''}(0)\\
&\geq\int_{S^n(r)} \frac{1}{r^2}\biggl\{(r^2-n-\frac{1}{2})^2+\frac{7}{4}\biggl\}f_0^2d\mu\\
&+\int_{S^n(r)} \biggl\{\lambda^2-1+2(1+\lambda r)k-(1+\lambda r)k^2\biggl\}\langle z,N\rangle ^2d\mu\\
&=\int_{S^n(r)} \frac{1}{r^2}\biggl\{(r^2-n-\frac{1}{2})^2+\frac{7}{4}\biggl\}f_0^2d\mu\\
&+\int_{S^n(r)} \biggl\{\lambda^2+\lambda r-(1+\lambda r)(1-k)^2\biggl\}\langle z,N\rangle ^2d\mu.
\endaligned
\end{equation}
We next consider three cases:

\vskip 3mm
{\bf Case 1: $r\leq \sqrt{n}$}
\vskip3mm

\noindent
In this case, $\lambda\geq 0$. Taking $k=1$, then we get
$$\mathcal{F}^{''}(0)\geq 0.$$

\vskip3mm
{\bf Case 2: $r\geq\frac{1+\sqrt{1+4n}}{2}$}.
\vskip3mm

\noindent
In this case, $\lambda\leq -1$.  Taking $k=2$, we can get

$$\mathcal{F}^{''}(0)\geq 0.$$

\vskip3mm
{\bf Case 3: $\sqrt{n+1}< r< \frac{1+\sqrt{1+4n}}{2}$}.
\vskip3mm

\noindent
In this case, $-1< \lambda<0$, $1+\lambda r<0$, we can take $k$ such that
$(1-k)^2\geq\frac{\lambda(\lambda+r)}{1+\lambda r}$, then we have

$$\mathcal{F}^{''}(0)\geq 0.
$$
Thus, if $r\leq \sqrt{n}$ or  $r>\sqrt{n+1}$, the $n$-dimensional round sphere $X: {S}^n(r)\rightarrow \mathbb{R}^{n+1}$
is  $\mathcal{F}$-stable;

\vskip3mm
\noindent
If  $ \sqrt{n}< r\leq\sqrt{n+1}$,  the $n$-dimensional round sphere $X: {S}^n(r)\rightarrow \mathbb{R}^{n+1}$
is  $\mathcal{F}$-unstable. In fact,
in this case, $-1< \lambda< 0$, $1+\lambda r\geq0$.  We can choose $f$ such that $f_0=0$, then we have
\begin{equation}
\aligned
(4\pi)^{\frac{n}{2}}e^{\frac{r^2}{2}}\mathcal{F}^{''}(0)
&=\int_{S^n(r)} (\lambda^2-1)\langle z,N\rangle ^2d\mu\\
&\ \ \ +\int_{S^n(r)} 2(1+\lambda r)\langle N,y\rangle \langle N,z\rangle d\mu\\
&\ \ \ +\int_{S^n(r)}-(1+\lambda r)\langle N,y\rangle ^2d\mu\\
&=(\lambda^2+\lambda r)\int_{S^n(r)}\langle z,N\rangle ^2d\mu\\
&\ \ \ -(1+\lambda r)\int_{S^n(r)} (\langle z,N\rangle -\langle y,N\rangle )^2d\mu\\
&< 0.
\endaligned
\end{equation}
 This completes the proof of the theorem \ref{theorem 6.2}.
\end{proof}

\noindent
According to our theorem 6.2, we would like to propose the following:

\vskip2mm
\noindent
{\bf Problem 6.1}.  Is it possible to prove that
spheres $S^n(r)$ with $r\leq \sqrt n$ or $r>\sqrt {n+1}$ are  the only $\mathcal F$-stable
compact $\lambda$-hypersurfaces?
\begin{remark}   Colding and Minicozzi \cite{[CZ]} have proved that the sphere $S^n(\sqrt n)$ is
the only $\mathcal F$-stable compact self-shrinkers. In order to prove this result,
the property that the mean curvature $H$ is an eigenfunction of $L$-operator plays a very important role.
But for $\lambda$-hypersurfaces, the mean curvature $H$ is not an eigenfunction of $L$-operator in general.
\end{remark}

\vskip 5mm

\section{The weak stability of the weighted area functional for  weighted volume-preserving variations}

\noindent
Define
\begin{equation}
\mathcal{T}(s)=(4\pi t_s)^{-\frac{n}{2}}\int_M e^{-\frac{|X(s)-X_s|^2}{2t_s}}d\mu_s.
\end{equation}
We compute the first and the second variation formulas of the general $\mathcal{T}$-functional for weighted volume-preserving variations.
By a direct calculation, we have
\begin{equation*}
\aligned
&\ \ \ \ \mathcal{T}^{'}(s)\\
&=(4\pi t_s)^{-\frac{n}{2}}\int_M -(H_s+\langle \frac{X(s)-X_s}{t_s}, N(s)\rangle )fe^{-\frac{|X(s)-X_s|^2}{2t_s}}d\mu_s\\
&\ \ \ +(4\pi t_s)^{-\frac{n}{2}}\int_M \langle \frac{X(s)-X_s}{t_s},y\rangle e^{-\frac{|X(s)-X_s|^2}{2t_s}}d\mu_s\\
&\ \ \ +(4\pi t_s)^{-\frac{n}{2}}\int_M (-\frac{n}{2t_s}+\frac{|X(s)-X_s|^2}{2t_s^2})he^{-\frac{|X(s)-X_s|^2}{2t_s}}d\mu_s.
\endaligned
\end{equation*}

\begin{equation*}
\aligned
&\ \ \ \ \mathcal{T}^{''}(s)\\
&=(4\pi t_s)^{-\frac{n}{2}}\int_M -(H_s+\langle \frac{X(s)-X_s}{t_s}, N(s)\rangle )f^{'}e^{-\frac{|X(s)-X_s|^2}{2t_s}}d\mu_s\\
&\ \ \ +(4\pi t_s)^{-\frac{n}{2}}\int_M \frac{nh}{2t_s}(H_s+\langle \frac{X(s)-X_s}{t_s},N(s)\rangle fe^{-\frac{|X(s)-X_s|^2}{2t_s}}d\mu_s\\
&\ \ \ +(4\pi t_s)^{-\frac{n}{2}}\int_M (H_s+\langle \frac{X(s)-X_s}{t_s},N(s)\rangle )\times\\
&\ \ \ \ \ \ \ \ \ \ \ \ \ \ \ \ \ \ \ \ \ \ \ (\langle \frac{X(s)-X_s}{t_s},
   \frac{\partial X(s)}{\partial s}-\frac{\partial X_s}{\partial s}\rangle +H_sf)fe^{-\frac{|X(s)-X_s|^2}{2t_s}}d\mu_s\\
   &    \ \ \ \ +(4\pi t_s)^{-\frac{n}{2}}\int_M -\biggl(\frac{dH_s}{ds}
   +\langle \frac{\frac{\partial X(s)}{\partial s}-\frac{\partial X_s}{\partial s}}{t_s},N(s)\rangle -\langle \frac{X(s)-X_s}{t_s^2},N(s)\rangle h\\
   & \ \ \ \ \ \ \ \ \ \ \ \ \ \ \ \ \ \ \ \ \ \ \ \ +\langle \frac{X(s)-X_s}{t_s},\frac{dN(s)}{ds}\rangle \biggl)fe^{-\frac{|X(s)-X_s|^2}{2t_s}}d\mu_s\\
& \ \ \ \ +(4\pi t_s)^{-\frac{n}{2}}\int_M\langle \frac{X(s)-X_s}{t_s},y^{'}\rangle e^{-\frac{|X(s)-X_s|^2}{2t_s}}d\mu_s
\endaligned
\end{equation*}

\begin{equation*}
\aligned
&  \ \ +(4\pi t_s)^{-\frac{n}{2}}(-\frac{nh}{2t_s})\int_M\langle \frac{X(s)-X_s}{t_s},y\rangle e^{-\frac{|X(s)-X_s|^2}{2t_s}}d\mu_s\ \ \\
&  \ \ +(4\pi t_s)^{-\frac{n}{2}}\int_M (\langle \frac{\frac{\partial X(s)}{\partial s}-\frac{\partial X_s}{\partial s}}{t_s},y\rangle
  -\langle \frac{X(s)-X_s}{t_s^2},y\rangle h)e^{-\frac{|X(s)-X_s|^2}{2t_s}}d\mu_s
\endaligned
\end{equation*}

\begin{equation*}
\aligned
& +(4\pi t_s)^{-\frac{n}{2}}\int_M \langle \frac{X(s)-X_s}{t_s},y\rangle \biggl(-\langle \frac{X(s)-X_s}{t_s},
  \frac{\partial X(s)}{\partial s}-\frac{\partial X_s}{\partial s}\rangle\ \ \  \\
& \ \ \ \ \ \ \  \ \ \ \ \ \ \ \ \ \ \ \ \
-H_sf\biggl)e^{-\frac{|X(s)-X_s|^2}{2t_s}}d\mu_s\\
& +(4\pi t_s)^{-\frac{n}{2}}\int_M(-\frac{n}{2t_s}+\frac{|X(s)-X_s|^2}{2t_s^2})h^{'}e^{-\frac{|X(s)-X_s|^2}{2t_s}}d\mu_s\\
& +(4\pi t_s)^{-\frac{n}{2}}(-\frac{nh}{2t_s})\int_M (-\frac{n}{2t_s}+\frac{|X(s)-X_s|^2}{2t_s^2})he^{-\frac{|X(s)-X_s|^2}{2t_s}}d\mu_s
\endaligned
\end{equation*}

\begin{equation*}
\aligned
&\ \ \ +(4\pi t_s)^{-\frac{n}{2}}\int_M (\frac{nh}{2t_s^2}-\frac{|X(s)-X_s|^2}{t_s^3}h
+\frac{\langle X(s)-X_s,\frac{\partial X(s)}{\partial s}-\frac{\partial X_s}{\partial s}\rangle }{t_s^2})\times\\
&\ \ \ \ \ \ \ \ \ \ \ \ \ \ \ \ \ \ \ \ \ \ \ \ \ he^{-\frac{|X(s)-X_s|^2}{2t_s}}d\mu_s\\
&\ \ \ \ \ \ \ \ \ \ \ \ \ \ \ \ \ \ \ \ \ \ \ \ \ \ \ \ \
 -\frac{1}{2t_s}\langle \frac{\partial X(s)}{\partial s}-\frac{\partial X_s}{\partial s},N\rangle \lambda h)e^{-\frac{|X-X_0|^2}{2t_0}}d\mu\\
&\ \ \ +(4\pi t_s)^{-\frac{n}{2}}\int_M (-\frac{n}{2t_s}+\frac{|X(s)-X_s|^2}{2t_s^2})h(-H_sf\\
&\ \ \ \ \ \ \ \ \ \ \ \ \ \ \ \ \ \ \ \ \ \ \ \
  -\langle \frac{X(s)-X_s}{t_s},\frac{\partial X(s)}{\partial s}-\frac{\partial X_s}{\partial s}\rangle )e^{-\frac{|X(s)-X_s|^2}{2t_s}}d\mu_s\\
\endaligned
\end{equation*}

\begin{equation*}
\aligned
&\ \ +(4\pi t_s)^{-\frac{n}{2}}\int_M-(H_s+\langle \frac{X(s)-X_s}{t_s},N(s)\rangle )f\frac{|X(s)-X_s|^2}{2t_s^2}h\\
&\ \ \ \ \ \ \ \ \ \ \ \ \ \ \ \ \ \ \ \ \ \ \ \ \ \times e^{-\frac{|X(s)-X_s|^2}{2t_s}}d\mu_s\\
&\ \  +(4\pi t_s)^{-\frac{n}{2}}\int_M \langle \frac{X(s)-X_s}{t_s},y\rangle \frac{|X(s)-X_s|^2}{2t_s^2}he^{-\frac{|X(s)-X_s|^2}{2t_s}}d\mu_s\\
&\ \  +(4\pi t_s)^{-\frac{n}{2}}\int_M (-\frac{n}{2t_s}+\frac{|X(s)-X_s|^2}{2t_s^2})h
 \frac{|X(s)-X_s|^2}{2t_s^2}he^{-\frac{|X(s)-X_s|^2}{2t_s}}d\mu_s.
\endaligned
\end{equation*}

\begin{lemma}
$$\int_Mf^{'}(0)e^{-\frac{|X-X_0|^2}{2t_0}}d\mu=0.$$
\end{lemma}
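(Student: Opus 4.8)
The plan is to read off the identity from the second derivative in $s$ of the weighted volume function. Since the variation under consideration is weighted volume-preserving, the function
$$
V(s)=\int_M\langle X(s)-X_0,N\rangle e^{-\frac{|X-X_0|^2}{2t_0}}d\mu
$$
is constant in $s$; in particular $V''(0)=0$. The key observation is that the weight $e^{-\frac{|X-X_0|^2}{2t_0}}d\mu$ involves only the fixed data $X=X(0)$, $N=N(0)$, $X_0$ and $t_0$, hence is independent of $s$. So I would differentiate twice under the integral sign (legitimate for our compactly supported variations) to get $V''(0)=\int_M\langle \partial_s^2X(s)|_{s=0},N\rangle e^{-\frac{|X-X_0|^2}{2t_0}}d\mu$.

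Next I would compute $\partial_s^2X$. From $\partial_sX(s)=f(s)N(s)$ one has $\partial_s^2X(s)=f'(s)N(s)+f(s)\partial_sN(s)$, and at $s=0$ the evolution law of the unit normal under a normal variation, namely $\partial_sN(s)|_{s=0}=-\nabla f$ (already used in Section 6 for the second variation of $\mathcal F$), shows that $\partial_sN(s)|_{s=0}$ is tangent to $M$. Therefore
$$
\langle \partial_s^2X(s)|_{s=0},N\rangle=f'(0)\langle N,N\rangle+f\langle-\nabla f,N\rangle=f'(0),
$$
because $|N|^2=1$ and $\langle\nabla f,N\rangle=0$. Combining this with $V''(0)=0$ gives $\int_M f'(0)e^{-\frac{|X-X_0|^2}{2t_0}}d\mu=0$, which is the assertion.

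I do not expect any real obstacle here: the only points needing a word of justification are differentiation under the integral sign, which is immediate since the weight is a fixed smooth function and the variations are compactly supported, and the formula $\partial_sN=-\nabla f$, which is standard for normal variations and was quoted earlier in the paper. Equivalently, one can obtain the same conclusion by differentiating the first variation formula $V'(s)=\int_M f(s)\langle N(s),N\rangle e^{-\frac{|X-X_0|^2}{2t_0}}d\mu$ once more in $s$ and setting $s=0$, using $\langle\partial_sN(s),N\rangle|_{s=0}=\langle-\nabla f,N\rangle=0$ so that the term involving $f\,\partial_s(\langle N(s),N\rangle)$ drops out.
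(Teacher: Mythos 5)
Your proposal is correct and is essentially the paper's argument: the paper likewise differentiates the constancy of the weighted volume a second time, writing $0=\frac{d}{dt}\big|_{t=0}\int_M f(t)\langle N(t),N\rangle e^{-\frac{|X-X_0|^2}{2t_0}}d\mu=\int_M f'(0)e^{-\frac{|X-X_0|^2}{2t_0}}d\mu$, using exactly the fact that $\langle \partial_s N,N\rangle=0$ at $s=0$. The alternative route you sketch in your last paragraph is verbatim the paper's proof, and your primary computation via $\partial_s^2X$ is the same calculation in a slightly different order.
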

\begin{proof}
Since $V(t)=\int_M\langle X(t)-X_0,N\rangle e^{-\frac{|X-X_0|^2}{2t_0}}d\mu=V(0)$ for any $t$, we have
$$\int_Mf(t)\langle N(t),N\rangle e^{-\frac{|X-X_0|^2}{2t_0}}d\mu=0.
$$
Hence, we get
\begin{equation*}
\aligned
0&=\frac{d}{dt}|_{t=0}\int_Mf(t)\langle N(t),N\rangle e^{-\frac{|X-X_0|^2}{2t_0}}d\mu\\
 &=\int_M f^{'}(0)e^{-\frac{|X-X_0|^2}{2t_0}}d\mu.
\endaligned
\end{equation*}
\end{proof}

\noindent
Since $M$ is a critical point of $\mathcal{T}(s)$, we have
$$
H+\langle \frac{X-X_0}{t_0},N\rangle =\lambda.
$$
On the other hand, we have
\begin{equation}
H^{'}=\Delta f+Sf, \ \ N^{'}=-\nabla f.
\end{equation}
Then for $t_0=1$ and $X_0=0$, the second variation formula becomes
\begin{equation*}
\aligned
&\ \ \ \ (4\pi)^{\frac{n}{2}}\mathcal{T}^{''}(0)\\
&=\int_M\langle X,y^{'}\rangle e^{-\frac{|X-|^2}{2}}d\mu+\int_M(\frac{|X|^2}{2}-\frac{n}{2})h^{'}e^{-\frac{|X|^2}{2}}d\mu\\
&\ \ \ +\int_M -f\bigl(\mathcal{L}f+(S+1-\lambda^2)f\bigl) e^{-\frac{|X|^2}{2}}d\mu\\
&\ \ \ +\int_M \biggl(2\langle N,y\rangle +(n-|X|^2)\lambda h+2\langle X,N\rangle h\\
&\ \ \ \ \ \ \ \ \ \ \ \ \ +2\langle N,y\rangle -2\lambda\langle X,y\rangle \biggl)f e^{-\frac{|X|^2}{2}}d\mu\\
&\ \ \ +\int_M (-(n+2)\langle X,y\rangle +\langle X,y\rangle |X|^2)h e^{-\frac{|X-|^2}{2}}d\mu\\
\endaligned
\end{equation*}

\begin{equation*}
\aligned
&\ \ \ +\int_M (\frac{n^2+2n}{4}-\frac{n+2}{2}|X|^2+\frac{|X|^4}{4})h^2 e^{-\frac{|X|^2}{2}}d\mu\\
&\ \ \ +\int_M (-|y|^2+\langle X,y\rangle ^2)e^{-\frac{|X|^2}{2}}d\mu.
\endaligned
\end{equation*}

\begin{theorem}\label{theorem 7.1}
Let $X: M\rightarrow \mathbb{R}^{n+1}$ be a critical point of the functional $\mathcal{T}(s)$ for the weighted volume-preserving variations with fixed $X_0=0$ and $t_0=1$.
The second variation formula of $\mathcal{T}(s)$  is given by
\begin{equation}
\aligned
&\ \ \ \ (4\pi)^{\frac{n}{2}}\mathcal{T}^{''}(0)=\int_M -f\bigl(\mathcal{L}f+(S+1-\lambda^2)f\bigl) e^{-\frac{|X|^2}{2}}d\mu.
\endaligned
\end{equation}
\end{theorem}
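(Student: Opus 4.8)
\noindent
The plan is to read off Theorem \ref{theorem 7.1} directly from the general second variation computation already carried out for $\mathcal{T}(s)$. Recall that, working at the base point $X_0=0$, $t_0=1$ and using the critical point equation $H+\langle X,N\rangle=\lambda$ together with $H'=\Delta f+Sf$, $N'=-\nabla f$, the lemma immediately preceding this theorem ($\int_M f'(0)e^{-|X|^2/2}\,d\mu=0$), and the integral identities of Lemma \ref{lemma 10}, we obtained a formula for $(4\pi)^{n/2}\mathcal{T}''(0)$ in which exactly one term, namely $\int_M -f\bigl(\mathcal{L}f+(S+1-\lambda^2)f\bigr)e^{-|X|^2/2}\,d\mu$, is independent of the auxiliary data $y$ and $h$; every other term carries at least one factor among $y$, $h$, $y'$, $h'$.

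\noindent
The key observation is then simply that in the present statement $X_0$ and $t_0$ are held fixed: the admissible variations are weighted volume-preserving normal variations $X(s)$ with $X_s\equiv 0$ and $t_s\equiv 1$. Hence $y(s)=\partial X_s/\partial s\equiv 0$ and $h(s)=\partial t_s/\partial s\equiv 0$, so that $y'(0)=0$ and $h'(0)=0$ as well. Substituting $y=h=y'=h'=0$ into the formula above annihilates all but the operator term and yields
\[
(4\pi)^{n/2}\mathcal{T}''(0)=-\int_M f\bigl(\mathcal{L}f+(S+1-\lambda^2)f\bigr)e^{-|X|^2/2}\,d\mu=-\int_M fLf\,e^{-|X|^2/2}\,d\mu,
\]
which is the assertion.

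\noindent
There is no serious analytic obstacle here; the proof is essentially bookkeeping once the general $\mathcal{T}''$-formula is in hand. The one step that genuinely uses the hypothesis is that the variation be weighted volume-preserving: this is exactly what forces the term $-\int_M(H+\langle X,N\rangle)f'e^{-|X|^2/2}\,d\mu=-\lambda\int_M f'(0)e^{-|X|^2/2}\,d\mu$ to vanish (via the preceding lemma), and without it a spurious $f'$-term would survive in the reduction. I would therefore make sure that constraint is invoked explicitly when passing from the general computation to the specialized formula, and otherwise the result follows by inspection.
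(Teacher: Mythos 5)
Your proposal is correct and follows essentially the same route as the paper: the general $\mathcal{T}''$ computation is specialized by setting $y=h=y'=h'=0$ (since $X_0$ and $t_0$ are fixed), and the $f'$-term is killed by the critical point equation $H+\langle X,N\rangle=\lambda$ together with the lemma $\int_M f'(0)e^{-|X|^2/2}\,d\mu=0$, which is exactly where the weighted volume-preserving hypothesis enters. Your explicit flagging of that last step is a sound reading of the argument; nothing further is needed.
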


\begin{definition}
A critical point $X: M\rightarrow \mathbb{R}^{n+1}$ of the  functional $\mathcal{T}(s)$
is  called weakly stable if, for any  weighted volume-preserving normal variation,
  $\mathcal{T^{\prime\prime}}(0)\geq 0$;

\noindent
A critical point $X: M\rightarrow \mathbb{R}^{n+1}$ of the  functional $\mathcal{T}(s)$
is  called weakly unstable if there exists  a weighted volume-preserving normal variation,
  such that $\mathcal{T^{\prime\prime}}(0)< 0$.
\end{definition}

\begin{theorem}\label{theorem 7.2}
If  $r\leq \frac{-1+\sqrt{1+4n}}{2}$ or  $r\geq \frac{1+\sqrt{1+4n}}{2}$, the $n$-dimensional round sphere $X: {S}^n(r)\rightarrow \mathbb{R}^{n+1}$
is   weakly stable;
If  $ \frac{-1+\sqrt{1+4n}}{2}< r < \frac{1+\sqrt{1+4n}}{2}$,  the $n$-dimensional round sphere $X: {S}^n(r)\rightarrow \mathbb{R}^{n+1}$
is  weakly unstable.
\end{theorem}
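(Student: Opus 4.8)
The plan is to substitute the sphere data into the second variation formula of Theorem \ref{theorem 7.1} and reduce the statement to an eigenvalue comparison on $S^n(r)$. On $S^n(r)$ one has $X=-rN$, so $\langle X,e_i\rangle=0$ and $\mathcal{L}f=\Delta f$, while $e^{-|X|^2/2}=e^{-r^2/2}$ is constant, $S=\frac n{r^2}$, and $\lambda=\frac nr-r$. Hence Theorem \ref{theorem 7.1} reads
$$(4\pi)^{\frac n2}e^{\frac{r^2}2}\mathcal{T}''(0)=-\int_{S^n(r)}f\Bigl(\Delta f+\bigl(\tfrac n{r^2}+1-\lambda^2\bigr)f\Bigr)d\mu=\int_{S^n(r)}\Bigl(|\nabla f|^2-\bigl(\tfrac n{r^2}+1-\lambda^2\bigr)f^2\Bigr)d\mu,$$
where the integration by parts is legitimate because $S^n(r)$ is compact.

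Next I would impose the constraint. A weighted volume-preserving normal variation $fN$ of $S^n(r)$ (with the fixed data $X_0=0$ and $t_0=1$) satisfies $\int_{S^n(r)}f\,d\mu=0$ by the very definition of such a variation, so $f$ is $L^2$-orthogonal to the constants, which form the $\mu_0=0$ eigenspace of $\Delta$. Recalling that the eigenvalues of $\Delta$ on $S^n(r)$ are $\mu_k=\frac{k^2+(n-1)k}{r^2}$ and that the functions $\langle z,N\rangle$ with $z\in\mathbb{R}^{n+1}$ constant span the first eigenspace $\mu_1=\frac n{r^2}$ (as in the proof of Theorem \ref{theorem 6.2}), I write $f=\langle z,N\rangle+f_0$ with $f_0$ lying in the span of the eigenspaces for $k\geq2$, so that $\int_{S^n(r)}|\nabla f_0|^2\,d\mu\geq\mu_2\int_{S^n(r)}f_0^2\,d\mu=\frac{2(n+1)}{r^2}\int_{S^n(r)}f_0^2\,d\mu$. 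Substituting this and using $\lambda^2-1=\frac{r^4-(2n+1)r^2+n^2}{r^2}$ gives
$$(4\pi)^{\frac n2}e^{\frac{r^2}2}\mathcal{T}''(0)\ \geq\ (\lambda^2-1)\int_{S^n(r)}\langle z,N\rangle^2\,d\mu+\frac{r^4-(2n+1)r^2+n^2+n+2}{r^2}\int_{S^n(r)}f_0^2\,d\mu.$$

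The decisive observation is that the coefficient of $\int f_0^2$ is always strictly positive, since the quadratic $t^2-(2n+1)t+(n^2+n+2)$ in $t=r^2$ has discriminant $(2n+1)^2-4(n^2+n+2)=-7<0$. Thus the sign of $\mathcal{T}''(0)$ over the admissible variations is governed by $(\lambda^2-1)\int\langle z,N\rangle^2$, and a direct factorization gives $\lambda^2-1=\frac1{r^2}\bigl(r^2-(\tfrac{-1+\sqrt{1+4n}}2)^2\bigr)\bigl(r^2-(\tfrac{1+\sqrt{1+4n}}2)^2\bigr)$. Hence $\lambda^2\geq1$ exactly when $r\leq\frac{-1+\sqrt{1+4n}}2$ or $r\geq\frac{1+\sqrt{1+4n}}2$, and then $\mathcal{T}''(0)\geq0$ for every admissible $f$, so $S^n(r)$ is weakly stable. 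When $\frac{-1+\sqrt{1+4n}}2<r<\frac{1+\sqrt{1+4n}}2$ one has $\lambda^2-1<0$; taking $f=\langle z,N\rangle$ for a fixed $z\neq0$ (admissible since $\int_{S^n(r)}\langle z,N\rangle\,d\mu=0$ by symmetry and $f_0=0$) yields $(4\pi)^{\frac n2}e^{\frac{r^2}2}\mathcal{T}''(0)=(\lambda^2-1)\int_{S^n(r)}\langle z,N\rangle^2\,d\mu<0$, so $S^n(r)$ is weakly unstable.

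The work is essentially bookkeeping rather than hard analysis: one must pin down exactly which eigenspace $\langle z,N\rangle$ occupies, so that after imposing $\int f\,d\mu=0$ no spurious negative term survives, and then carry out the elementary discriminant check that controls the higher modes $f_0$. In contrast to the $\mathcal{F}$-stability proof of Theorem \ref{theorem 6.2}, there is no freedom here to perturb $X_0$ and $t_0$; this actually shortens the argument, since the only mode that can force instability is the first one $\langle z,N\rangle$, and it cannot be cancelled against anything.
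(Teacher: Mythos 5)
Your proposal is correct and follows essentially the same route as the paper: substitute the sphere data into Theorem \ref{theorem 7.1}, use the constraint $\int_{S^n(r)}f\,d\mu=0$ to remove the constant mode, split $f$ into its first-eigenspace part $\langle z,N\rangle$ and a higher-mode part $f_0$, check that the coefficient of $\int f_0^2$ is always positive (your discriminant $-7$ is the paper's $(r^2-n-\tfrac12)^2+\tfrac74$ in disguise), and read off the sign from $\lambda^2-1$ on the first eigenspace, with $f=\langle z,N\rangle$ furnishing the unstable direction. No gaps; the only differences from the paper's proof are cosmetic.
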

\begin{proof}
For the sphere ${S}^n(r)$, we have
$$
X=-rN,\ \ H=\frac{n}{r},\ \ S=\frac{n}{r^2},\ \ \lambda=H-r=\frac{n}{r}-r
$$
and
\begin{equation}
Lf=\mathcal{L}f+(S+1-\lambda^2)f=\Delta f+(\frac{n}{r^2}+1-\lambda^2)f.
\end{equation}
Since we know that  eigenvalues $\mu_k$ of $\Delta$ on the sphere ${S}^n(r)$ are given by
\begin{equation}
\mu_k=\frac{k^2+(n-1)k}{r^2},
\end{equation}
and  constant functions are eigenfunctions corresponding to eigenvalue $\mu_0=0$. For any constant vector $z\in \mathbb{R}^{n+1}$, we get
\begin{equation}
-\Delta\langle z,N\rangle =\frac{n}{r^2}\langle z,N\rangle ,
\end{equation}
that is, $\langle z,N\rangle $ is an eigenfunction of $\Delta$ corresponding to the first eigenvalue $\mu_1=\frac{n}{r^2}$.
Hence, for any weighted volume-preserving normal variation with the variation vector field $fN$ satisfying
$$
\int_{S^n(r)} fe^{-\frac{r^2}{2}}d\mu=0,
$$
we can choose  a constant vector $z\in \mathbb{R}^{n+1}$ such that

\begin{equation}
f=f_0+\langle z,N\rangle ,
\end{equation}
and  $f_0$ is in the space spanned by all eigenfunctions corresponding to eigenvalues $\mu_k$ $(k\geq2)$ of $\Delta$ on ${S}^n(r)$.
By making use of the theorem  \ref{theorem 7.1}, we have
\begin{equation}\label{eq:700}
\aligned
&\ \ \ \ (4\pi)^{\frac{n}{2}}e^{\frac{r^2}{2}}\mathcal{T}^{''}(0)\\
&=\int_{S^n(r)} -(f_0+\langle z,N\rangle )L(f_0+\langle z,N\rangle ) d\mu\\
&\geq\int_{S^n(r)} \biggl\{(\frac{n+2}{r^2}-1+\lambda^2)f_0^2+(\lambda^2-1)\langle z,N\rangle ^2\biggl\}d\mu.
\endaligned
\end{equation}
According to  $\lambda=\frac{n}{r}-r$, we obtain

\begin{equation*}
\aligned
&(4\pi)^{\frac{n}{2}}e^{\frac{r^2}{2}}\mathcal{T}^{''}(0)\\
&\geq\int_{S^n(r)} \frac{1}{r^2}\bigl\{(r^2-n-\frac{1}{2})^2+\frac{7}{4}\bigl\}f_0^2d\mu
 +\int_{S^n(r)} (\frac nr-r-1)(\frac nr-r+1)\langle z,N\rangle ^2d\mu\geq 0
\endaligned
\end{equation*}
if
$$
r\leq \dfrac{-1+\sqrt{4n+1}}2  \ \ \ {\rm or} \ \ \ r\geq \dfrac{1+\sqrt{4n+1}}2 .
$$
Thus,  the $n$-dimensional round sphere $X: {S}^n(r)\rightarrow \mathbb{R}^{n+1}$
is  weakly stable.

\noindent
If
$$
 \dfrac{-1+\sqrt{4n+1}}2 < r<\dfrac{1+\sqrt{4n+1}}2,
$$
 choosing $f=<z, N>$,
 we have
 $$
\int_{S^n(r)} fe^{-\frac{r^2}{2}}d\mu=0.
$$
Hence,  there exists a weighted volume-preserving normal variation
with the variation vector filed $fN$ such that
\begin{equation*}
\aligned
(4\pi)^{\frac{n}{2}}e^{\frac{r^2}{2}}\mathcal{T}^{''}(0)
=\int_{S^n(r)} (\frac nr-r-1)(\frac nr-r+1)\langle z,N\rangle ^2d\mu< 0.
\endaligned
\end{equation*}
Thus, the $n$-dimensional round sphere $X: {S}^n(r)\rightarrow \mathbb{R}^{n+1}$
is  weakly unstable.
It finishes the proof.
\end{proof}

\begin{remark}
From the theorem \ref{theorem 6.2} and theorem \ref{theorem 7.2}, we know the $\mathcal{F}$-stability and
the weak stability are different. The $\mathcal{F}$-stability is a weaker notation than
the weak stability.
\end{remark}
\begin{remark}
Is it possible to prove that
spheres $S^n(r)$ with $r\leq \frac{-1+\sqrt{1+4n}}{2}$ or  $r\geq \frac{1+\sqrt{1+4n}}{2}$ are  the only  weak stable
compact $\lambda$-hypersurfaces?
\end{remark}
\vskip 5mm

\section{Complete and  non-compact $\lambda$-hypersurfaces}

\noindent
In this section, we will give  a  classification of complete and non-compact $\lambda$-hypersurfaces.

\begin{theorem}\label{theorem 10}
$S^k(r)\times \mathbb{R}^{n-k}$, $0\leq k\leq n$, are the only  complete embedded $\lambda$-hypersurfaces with polynomial area growth
in $\mathbb{R}^{n+1}$ if  $H-\lambda\geq 0$ and $\lambda(f_3(H-\lambda)-S)\geq0$.
\end{theorem}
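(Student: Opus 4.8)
The plan is to run the argument of Theorem~\ref{theorem 4.1} in the complete non-compact setting, replacing the compactness-based maximum principle by the \emph{local} strong maximum principle of E.~Hopf, and the integration by parts on a closed manifold by Corollary~\ref{corollary 1}, whose integrability hypotheses are precisely where polynomial area growth will enter.

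First I would dispose of the case $\lambda=0$: then the hypothesis reads $H\geq 0$ and $M$ is a self-shrinker, so the conclusion is exactly the Colding--Minicozzi classification \cite{[CM]}. So assume $\lambda\neq 0$, and show $H-\lambda>0$ on all of $M$. Put $u=H-\lambda\geq 0$; by Lemma~\ref{lemma 2} one has $\mathcal{L}u+(S-1)u=\lambda$. If $\lambda<0$ this is a differential inequality $\mathcal{L}u+(S-1)u\leq 0$ with $u\geq 0$, and since $S$ is bounded on each small ball, Hopf's strong maximum principle forces either $u>0$ everywhere or $u\equiv 0$ on the connected manifold $M$; in the second case \eqref{eq:224-11} gives $\lambda=0$, a contradiction. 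If $\lambda>0$, then at any point $p$ with $(H-\lambda)(p)=0$ the sign condition $\lambda(f_3(H-\lambda)-S)\geq 0$ forces $S(p)=0$, hence $H(p)=\lambda=0$, again impossible. Thus $H-\lambda>0$ everywhere.

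Next I would reproduce, verbatim from the proof of Theorem~\ref{theorem 4.1}, the computation of $\mathcal{L}\big((H-\lambda)^{-2}\big)$ and $\mathcal{L}\big(S(H-\lambda)^{-2}\big)$ from Lemma~\ref{lemma 2}, and then apply Corollary~\ref{corollary 1} with $u=S$, $v=S(H-\lambda)^{-2}$ to obtain the weighted integral identity \eqref{eq:4-11-1}. The assumption $\lambda(f_3(H-\lambda)-S)\geq 0$ then forces each of the three non-negative terms in it to vanish, so that $\lambda\big(f_3-S/(H-\lambda)\big)\equiv 0$, $S/(H-\lambda)^2$ is constant, and $h_{ijk}(H-\lambda)=h_{ij}H_{,k}$. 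From here the conclusion follows as before: since $\lambda\neq 0$, if $H$ is constant then $h_{ijk}=0$, the second fundamental form is parallel, and completeness upgrades Proposition~\ref{prop 2.2} to give that $M$ is globally $S^k(r)\times\mathbb{R}^{n-k}$ (cf. Lawson \cite{[L]}); if $H$ is non-constant, then on an open set $U$ where $\nabla H\neq 0$ the Codazzi equation \eqref{eq:12-6-5} yields $h_{ij}H_{,k}=h_{ik}H_{,j}$, which (choosing $e_1=\nabla H/|\nabla H|$) forces $S=H^2$ on $U$, whence constancy of $S/(H-\lambda)^2$ makes $H$ constant on $U$ --- a contradiction. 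Embeddedness is then used to identify $M$ with the standard product $S^k(r)\times\mathbb{R}^{n-k}$, with $\lambda=k/r-r$.

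The hard part will be the integration by parts in the middle step. On a complete non-compact $\lambda$-hypersurface the integrand of \eqref{eq:4-11-1} involves negative powers of $H-\lambda$, and all one knows is $H-\lambda>0$ without a positive lower bound; verifying the integrability conditions of Corollary~\ref{corollary 1} (so that all cross terms and boundary terms vanish) will require combining polynomial area growth with the linear bounds $|H|\leq|\lambda|+|X|$ and $0<H-\lambda=-\langle X,N\rangle\leq|X|$ coming from the $\lambda$-hypersurface equation, in a suitable cut-off estimate --- or, alternatively, invoking the a priori estimates established in \cite{[CW1]}. A secondary point to be checked carefully is that the strong maximum principle argument in the case $\lambda<0$ is genuinely local and so needs no global curvature bound.
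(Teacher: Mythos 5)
Your overall architecture (reduce to $H-\lambda>0$ via the maximum principle and the sign hypothesis, then force rigidity from a weighted integral identity) matches the paper, and your first step is essentially the paper's. But the core of your plan --- transplanting the compact-case identity \eqref{eq:4-11-1}, i.e.\ integrating by parts against $v=S(H-\lambda)^{-2}$ --- has a gap that your own ``hard part'' paragraph does not close, and which I do not believe can be closed with the tools you list. The obstruction is that all your integrands carry \emph{negative} powers of $H-\lambda$ (namely $(H-\lambda)^{-2}$ and $(H-\lambda)^{-4}$), while on a complete non-compact $\lambda$-hypersurface one only knows $H-\lambda>0$ pointwise, with no positive lower bound at infinity. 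The bounds you propose to use, $|H|\le|\lambda|+|X|$ and $0<H-\lambda\le|X|$, control $H-\lambda$ from \emph{above}; they say nothing about $(H-\lambda)^{-1}$, so they cannot verify the integrability hypotheses of Corollary~\ref{corollary 1} for $u=S$, $v=S(H-\lambda)^{-2}$. Even when $\lambda>0$, where the sign hypothesis does give $(H-\lambda)^{-1}\le f_3/S\le\sqrt S$, you would need finiteness of the weighted integrals of $S^3$ and of $\sum_{i,j,k}h_{ijk}^2\,S(H-\lambda)^{-2}$, neither of which is available; when $\lambda<0$ the hypothesis gives no upper bound on $(H-\lambda)^{-1}$ at all. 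Deferring to ``a priori estimates in \cite{[CW1]}'' is not a proof.

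This is exactly the difficulty the paper's Section 8 is organized around, and it is resolved by a different choice of test functions, following Colding--Minicozzi: instead of $S(H-\lambda)^{-2}$ one works with $\log(H-\lambda)$ and $\sqrt S$, whose derivatives enter only through $S|\nabla\log(H-\lambda)|^2$ and $|\nabla\sqrt S|^2$, and whose zeroth-order terms involve $\lambda/(H-\lambda)$, which the sign hypothesis bounds by $|\lambda|\sqrt S$. Proposition~\ref{proposition 10}, the Schoen--Simon--Yau type inequality of Lemma~\ref{lemma 001}, polynomial area growth, cut-offs and dominated convergence (Proposition~\ref{proposition 11}) then show that $S$, $S^2$, $|\nabla\sqrt S|^2$, $\sum_{i,j,k}h_{ijk}^2$ and $S|\nabla\log(H-\lambda)|^2$ all have finite weighted integrals; the resulting identities \eqref{eq:225-2} and \eqref{eq:225-3} combine, via $\lambda f_3\ge\lambda S/(H-\lambda)$, into $0\ge\int_M\bigl|\nabla\sqrt S-\sqrt S\,\nabla\log(H-\lambda)\bigr|^2e^{-\frac{|X|^2}{2}}d\mu$, giving $\sqrt S=\beta(H-\lambda)$ together with equality in Lemma~\ref{lemma 001}; the endgame is then a rank analysis of $(h_{ij})$ (an open-closed argument when the rank is at least two, and $S=H^2$ plus Proposition~\ref{prop 2.2} when the rank is one), rather than your ``$H$ must be constant'' contradiction. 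If you want to salvage your route you would in any case have to first establish the integrability package of Proposition~\ref{proposition 11}, at which point the paper's test functions are the natural ones; I would therefore replace the $S(H-\lambda)^{-2}$ computation by the $\log(H-\lambda)$ and $\sqrt S$ computation.
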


\begin{remark}  The assumption  $\lambda(f_3(H-\lambda)-S)\geq 0$ in the theorem \ref{theorem 10} is satisfied
for self-shrinkers of the mean curvature flow, automatically and  the assumption is essential.
In fact, $\Gamma \times \mathbb{R}^{n-1}$ are counterexamples, which satisfy  $H-\lambda>0$, 
where $\Gamma$ are compact embedded $\lambda$-curves other than the circle (see Remark 2.2). 
\end{remark}

\noindent
At first, we prepare the following lemmas and propositions.

\begin{lemma}\label{lemma 001}
Let $X: M\rightarrow \mathbb{R}^{n+1}$ be an $n$-dimensional immersed hypersurface in the $(n+1)$-dimensional
Euclidean space $\mathbb{R}^{n+1}$. At any point $p\in M$, we have
\begin{equation}
|\nabla \sqrt{S}|^2\leq\sum_{i,k} h_{iik}^2\leq \sum_{i,j,k}h_{ijk}^2,
\end{equation}

\begin{equation}
\frac{n+3}{n+1}|\nabla \sqrt{S}|^2\leq \sum_{i,j,k}h_{ijk}^2+\frac{2n}{n+1}|\nabla H|^2.
\end{equation}
\end{lemma}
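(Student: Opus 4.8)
The plan is to prove the two inequalities in Lemma~\ref{lemma 001} as pointwise linear-algebra statements about the symmetric $3$-tensor $h_{ijk}=\nabla_kh_{ij}$, which is totally symmetric in the indices $i,j,k$ by the Codazzi equation~\eqref{eq:12-6-5}. Fix a point $p\in M$ and diagonalize the second fundamental form there, so $h_{ij}=\lambda_i\delta_{ij}$. Then $S=\sum_i\lambda_i^2$, and since $\sqrt S$ is differentiable wherever $S>0$ we compute $\nabla_k\sqrt S=\frac{1}{2\sqrt S}\nabla_kS=\frac{1}{\sqrt S}\sum_ih_{ii}h_{iik}=\frac{1}{\sqrt S}\sum_i\lambda_ih_{iik}$ at $p$ (and where $S=0$ both sides of the first inequality vanish and the claim is trivial, since $\nabla\sqrt S$ is then understood as $0$). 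By Cauchy--Schwarz, $|\nabla\sqrt S|^2=\sum_k\bigl(\frac{1}{\sqrt S}\sum_i\lambda_ih_{iik}\bigr)^2\le\sum_k\bigl(\frac{\sum_i\lambda_i^2}{S}\bigr)\bigl(\sum_ih_{iik}^2\bigr)=\sum_{i,k}h_{iik}^2$. The second inequality $\sum_{i,k}h_{iik}^2\le\sum_{i,j,k}h_{ijk}^2$ is immediate since the left side is a subsum (only terms with $i=j$) of the right side with all terms nonnegative. This proves the first displayed inequality.

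\textbf{For the second displayed inequality}, the strategy is to split $\sum_{i,j,k}h_{ijk}^2$ according to how many of the three indices coincide and to exploit total symmetry. Write $\sum_{i,j,k}h_{ijk}^2=\sum_ih_{iii}^2+3\sum_{i\ne k}h_{iik}^2+\sum_{i,j,k\ \text{distinct}}h_{ijk}^2$, where the factor $3$ counts the three ways to place the ``odd'' index among the three slots. Similarly $|\nabla H|^2=\sum_k H_{,k}^2=\sum_k\bigl(\sum_ih_{iik}\bigr)^2$, and $|\nabla\sqrt S|^2\le\sum_{i,k}h_{iik}^2=\sum_ih_{iii}^2+\sum_{i\ne k}h_{iik}^2$ from the first part. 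So it suffices to establish the numerical inequality
\begin{equation*}
\frac{n+3}{n+1}\Bigl(\sum_ih_{iii}^2+\sum_{i\ne k}h_{iik}^2\Bigr)\le\sum_ih_{iii}^2+3\sum_{i\ne k}h_{iik}^2+\sum_{i,j,k\ \text{distinct}}h_{ijk}^2+\frac{2n}{n+1}\sum_k\Bigl(\sum_ih_{iik}\Bigr)^2.
\end{equation*}
The term $\sum_{i,j,k\ \text{distinct}}h_{ijk}^2\ge0$ can be dropped. After multiplying through by $n+1$ and rearranging, the claim reduces to $2\sum_ih_{iii}^2+(3n-n-3+\,?)\sum_{i\ne k}h_{iik}^2+2n\sum_k(\sum_ih_{iik})^2\ge 2\sum_{i\ne k}h_{iik}^2$ after collecting the coefficients; more precisely one needs $(n+3)\bigl(\sum_ih_{iii}^2+\sum_{i\ne k}h_{iik}^2\bigr)\le(n+1)\bigl(\sum_ih_{iii}^2+3\sum_{i\ne k}h_{iik}^2\bigr)+2n\sum_k(\sum_ih_{iik})^2$, i.e.
\begin{equation*}
2\sum_ih_{iii}^2+2n\sum_k\Bigl(\sum_ih_{iik}\Bigr)^2\ \ge\ (3-n)\sum_{i\ne k}h_{iik}^2,
\end{equation*}
wait---\emph{I would recheck the bookkeeping here}; the honest version of this step is to expand $\sum_k(\sum_ih_{iik})^2=\sum_k\sum_{i,l}h_{iik}h_{llk}$, separate diagonal ($i=l$) from off-diagonal terms, and apply Cauchy--Schwarz or the elementary bound $(\sum_{i=1}^m a_i)^2\le m\sum a_i^2$ on appropriate groups of $h_{iik}$ (for fixed $k$, there are $n-1$ values $i\ne k$ plus the value $i=k$).

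\textbf{The main obstacle} is precisely this combinatorial bookkeeping: getting the constant $\frac{n+3}{n+1}$ exactly, rather than some weaker constant, requires carefully allocating the ``extra'' gradient terms $\sum_{i,j,k\ \text{distinct}}h_{ijk}^2$ and the full weight of $\frac{2n}{n+1}|\nabla H|^2$ against $\sum_{i\ne k}h_{iik}^2$ and $\sum_ih_{iii}^2$, and the sharp inequality is known to be delicate (it is the Cheng--Yau / Chern type refinement). I expect to follow the computation in the style of the self-shrinker literature (e.g.\ Colding--Minicozzi or the original Huisken argument), fixing $k$, writing $a_i:=h_{iik}$, noting $H_{,k}=\sum_ia_i$, and proving for each fixed $k$ a one-variable inequality of the form $\frac{n+3}{n+1}\sum_i a_i^2\le(\text{contribution to }\sum_{i,j}h_{ijk}^2)+\frac{2n}{n+1}(\sum_ia_i)^2$, then summing over $k$. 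The contribution to $\sum_{i,j}h_{ijk}^2$ for fixed $k$ includes $a_k^2$ (from $i=j=k$), $2\sum_{i\ne k}a_i^2$ (from $h_{iki}=h_{kii}=a_i$, two slots), and $\sum_{i\ne k}h_{iik}^2=\sum_{i\ne k}a_i^2$ again is already counted---so the coefficient of $a_i^2$, $i\ne k$, in $\sum_{i,j}h_{ijk}^2$ is $3$, and of $a_k^2$ is $1$. Combined with $|\nabla\sqrt S|^2\le\sum_{i,k}a_i^2$ this yields, per $k$, the requirement $\frac{n+3}{n+1}\sum_ia_i^2\le a_k^2+3\sum_{i\ne k}a_i^2+\frac{2n}{n+1}(\sum_ia_i)^2$, which upon clearing denominators is $(n+3)\sum_ia_i^2\le(n+1)a_k^2+3(n+1)\sum_{i\ne k}a_i^2+2n(\sum_ia_i)^2$; this is what I must verify, and it should follow from $2n(\sum a_i)^2\ge0$ together with $(n+1)a_k^2+3(n+1)\sum_{i\ne k}a_i^2\ge(n+3)\sum_i a_i^2$ whenever the latter holds, or otherwise by absorbing the deficit into the $|\nabla H|^2$ term via Cauchy--Schwarz. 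I would present this final per-$k$ estimate as the technical heart of the lemma and leave the routine summation over $k$ to the reader.
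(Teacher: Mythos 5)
Your argument is the standard Schoen--Simon--Yau / Colding--Minicozzi computation, which is precisely what the paper appeals to (it gives no proof, only the citation), and your reductions are correct: the first chain of inequalities via Cauchy--Schwarz in the frame diagonalizing $h_{ij}$ at $p$ is complete (note that $\sum_{i,k}h_{iik}^2$ is not frame-invariant, so the statement must indeed be read in that frame), the decomposition $\sum_{i,j,k}h_{ijk}^2=\sum_ih_{iii}^2+3\sum_{i\ne k}h_{iik}^2+\sum_{\mathrm{distinct}}h_{ijk}^2$ is right, and so is the per-$k$ target $(n+3)\sum_ia_i^2\le(n+1)a_k^2+3(n+1)\sum_{i\ne k}a_i^2+2n\bigl(\sum_ia_i\bigr)^2$ with $a_i=h_{iik}$. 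The one genuinely unverified step is that last inequality: your first suggested route (dropping the $2n(\sum_ia_i)^2$ term) does fail, as you suspected, e.g.\ when $a_i=0$ for all $i\ne k$ and $a_k\ne0$. The Cauchy--Schwarz absorption you allude to closes it. Writing $b=a_k$ and $C=\sum_{i\ne k}a_i$, the right side minus the left side of the per-$k$ inequality equals $-2b^2+2n\sum_{i\ne k}a_i^2+2n(b+C)^2$, and since $\sum_{i\ne k}a_i^2\ge C^2/(n-1)$ over the $n-1$ indices $i\ne k$,
\begin{equation*}
-2b^2+2n\sum_{i\ne k}a_i^2+2n(b+C)^2\ \ge\ \frac{2n}{n-1}C^2+2n(b+C)^2-2b^2\ =\ \frac{2}{n-1}\bigl(nC+(n-1)b\bigr)^2\ \ge\ 0
\end{equation*}
(the case $n=1$ being trivial, as the two sides then coincide). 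Summing over $k$ and discarding the nonnegative all-distinct terms gives the lemma exactly as you describe, so with this one line inserted your proof is complete; the constant $\tfrac{n+3}{n+1}$ is sharp, equality in the quadratic occurring at $nC=-(n-1)b$.
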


\noindent
Its proof is standard. See Schoen, Simon and Yau \cite{[SSY]} and Colding and Minicozzi \cite{[CM]}.

\begin{proposition}\label{proposition 10}
Let $X: M\rightarrow \mathbb{R}^{n+1}$ be an $n$-dimensional complete $\lambda$-hypersurface with $H-\lambda>0$ and
$\lambda(f_3-\frac{S}{H-\lambda})\geq0$. If $\eta$ is a function with compact support, then
\begin{equation}
\int_M\eta^2(S+|\nabla\log (H-\lambda)|^2)e^{-\frac{|X|^2}{2}}d\mu\leq c(n,\lambda)\int_M(|\nabla\eta|^2
 +\eta^2)e^{-\frac{|X|^2}{2}}d\mu,
\end{equation}
where $c(n,\lambda)$ is constant depending on $n$ and $\lambda$.
\end{proposition}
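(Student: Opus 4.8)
The plan is to follow the Colding--Minicozzi scheme for self-shrinkers (test a logarithmic differential identity against $\eta^{2}$ and reabsorb gradient terms), the only new feature being the extra term $\frac{\lambda}{H-\lambda}$ produced by $\lambda\neq 0$. Since $H-\lambda>0$ the function $w:=\log(H-\lambda)$ is smooth, and \eqref{eq:224-10} of the lemma \ref{lemma 2} reads $\mathcal{L}w=1-S+\frac{\lambda}{H-\lambda}-|\nabla w|^{2}$. Multiplying this by $\eta^{2}e^{-|X|^{2}/2}$, integrating over $M$, and invoking the lemma \ref{lemma 1} with $u=\eta^{2}$ and $v=w$ (legitimate because $\eta$ has compact support) gives
$$
\int_{M}\eta^{2}\Bigl(1-S+\tfrac{\lambda}{H-\lambda}-|\nabla w|^{2}\Bigr)e^{-\frac{|X|^{2}}{2}}d\mu=-2\int_{M}\eta\langle\nabla\eta,\nabla w\rangle e^{-\frac{|X|^{2}}{2}}d\mu .
$$
Estimating $2\eta\langle\nabla\eta,\nabla w\rangle\le 2|\nabla\eta|^{2}+\tfrac12\eta^{2}|\nabla w|^{2}$ and moving $\tfrac12\int_{M}\eta^{2}|\nabla w|^{2}e^{-|X|^{2}/2}d\mu$ to the left, I would arrive at
$$
\int_{M}\eta^{2}\Bigl(S+\tfrac12|\nabla w|^{2}\Bigr)e^{-\frac{|X|^{2}}{2}}d\mu\le\int_{M}\eta^{2}\Bigl(1+\tfrac{\lambda}{H-\lambda}\Bigr)e^{-\frac{|X|^{2}}{2}}d\mu+2\int_{M}|\nabla\eta|^{2}e^{-\frac{|X|^{2}}{2}}d\mu .
$$

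When $\lambda\le 0$, the term $\frac{\lambda}{H-\lambda}$ is nonpositive (recall $H-\lambda>0$) and may simply be discarded; doubling the inequality then yields the proposition with $c(n,\lambda)=4$, and the hypothesis $\lambda(f_{3}(H-\lambda)-S)\ge 0$ is not needed in this range. The substance is therefore the case $\lambda>0$, where $\frac{\lambda}{H-\lambda}$ is positive. The main obstacle here is that $H-\lambda=-\langle X,N\rangle$ need not be bounded away from $0$, so $\frac{\lambda}{H-\lambda}$ has no pointwise upper bound and the term $\int_{M}\eta^{2}\frac{\lambda}{H-\lambda}e^{-|X|^{2}/2}d\mu$ cannot be controlled on its own; the structural hypothesis is precisely what removes it.

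For $\lambda>0$ one has $H>\lambda>0$, hence $S\ge H^{2}/n>0$, and the hypothesis becomes $f_{3}\ge\frac{S}{H-\lambda}$. Diagonalizing the second fundamental form with principal curvatures $\kappa_{1},\dots,\kappa_{n}$, one has $f_{3}=\sum_{i}\kappa_{i}^{3}\le\sum_{i}|\kappa_{i}|^{3}\le(\max_{i}|\kappa_{i}|)\sum_{i}\kappa_{i}^{2}\le S^{3/2}$, so $\frac{S}{H-\lambda}\le f_{3}\le S^{3/2}$, i.e. $\frac{\lambda}{H-\lambda}\le\lambda\sqrt{S}$ pointwise. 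Feeding $\lambda\sqrt{S}\le\tfrac12 S+\tfrac12\lambda^{2}$ into the last display and absorbing $\tfrac12\int_{M}\eta^{2}Se^{-|X|^{2}/2}d\mu$ into the left-hand side gives
$$
\int_{M}\eta^{2}\bigl(S+|\nabla w|^{2}\bigr)e^{-\frac{|X|^{2}}{2}}d\mu\le(2+\lambda^{2})\int_{M}\eta^{2}e^{-\frac{|X|^{2}}{2}}d\mu+4\int_{M}|\nabla\eta|^{2}e^{-\frac{|X|^{2}}{2}}d\mu ,
$$
which is the assertion with $c(n,\lambda)=\max\{4,\,2+\lambda^{2}\}$.

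The only genuinely nontrivial point is recognizing that the hypothesis should be used not through the Simons identity \eqref{eq:224-9} for $\mathcal{L}\sqrt{S}$ directly, but through the chain $\frac{\lambda}{H-\lambda}\le\frac{\lambda f_{3}}{S}\le\lambda\sqrt{S}$, turning the uncontrollable term into a fraction of $\int_{M}\eta^{2}Se^{-|X|^{2}/2}d\mu$ that can be reabsorbed; everything else is the routine test-and-Young procedure. If instead one wished to keep $\sqrt{S}$ explicit, one could combine the first displayed estimate with \eqref{eq:224-9}, the Kato inequality $|\nabla\sqrt{S}|^{2}\le\sum_{i,j,k}h_{ijk}^{2}$ from the lemma \ref{lemma 001}, and the inequality $\frac{\lambda f_{3}}{\sqrt{S}}\ge\frac{\lambda\sqrt{S}}{H-\lambda}$, testing $\mathcal{L}\sqrt{S}$ against $\eta^{2}/\sqrt{S}$; this also works but is longer and yields a worse constant.
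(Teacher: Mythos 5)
Your proof is correct and follows essentially the same route as the paper: both test the identity $\mathcal{L}\log(H-\lambda)=1-S+\frac{\lambda}{H-\lambda}-|\nabla\log(H-\lambda)|^2$ against $\eta^2 e^{-|X|^2/2}$, integrate by parts, absorb the cross term by Young's inequality, and control $\frac{\lambda}{H-\lambda}$ through the chain $\frac{\lambda}{H-\lambda}\le\frac{\lambda f_3}{S}\le|\lambda|\sqrt{S}\le|\lambda|\bigl(\frac{S}{2\delta}+\frac{\delta}{2}\bigr)$ coming from the hypothesis $\lambda\bigl(f_3-\frac{S}{H-\lambda}\bigr)\ge0$ and $|f_3|\le S^{3/2}$, reabsorbing the resulting $S$-term on the left. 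The only differences are cosmetic: you split the cases $\lambda\le0$ and $\lambda>0$ with explicit constants (noting the structural hypothesis is only needed for $\lambda>0$ and that $S\ge H^2/n>0$ there), whereas the paper handles both signs at once via $|\lambda|$ and free parameters $\varepsilon,\delta$.
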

\begin{proof}
Since $H-\lambda>0$, $\log (H-\lambda)$ is well-defined. Suppose $\eta$ is a function with compact support,
the lemma \ref{lemma 2} and the corollary \ref{corollary 1} give
\begin{equation} \label{eq:001-1}
\aligned
&\ \ \ \int_M \langle \nabla \eta^2, \nabla\log(H-\lambda)\rangle e^{-\frac{|X|^2}{2}}d\mu\\
&=-\int_M \eta^2(\mathcal{L}\log(H-\lambda))e^{-\frac{|X|^2}{2}}d\mu\\
&=\int_M \eta^2\biggl(S-1-\frac{\lambda}{H-\lambda}+|\nabla\log(H-\lambda)|^2\biggl)e^{-\frac{|X|^2}{2}}d\mu.
\endaligned
\end{equation}
Combining this with inequality:
\begin{equation}
\langle \nabla \eta^2, \nabla\log(H-\lambda)\rangle \leq \varepsilon |\nabla \eta|^2+\frac{1}{\varepsilon}\eta^2|\nabla\log(H-\lambda)|^2
\end{equation}
gives that
\begin{equation}\label{eq:002}
\aligned
&\ \ \ \int_M (\eta^2S+\eta^2(1-\frac{1}{\varepsilon})|\nabla\log(H-\lambda)|^2)e^{-\frac{|X|^2}{2}}d\mu\\
&\leq\int_M(\varepsilon|\nabla \eta|^2+\eta^2+\frac{\lambda}{H-\lambda}\eta^2)e^{-\frac{|X|^2}{2}}d\mu,
\endaligned
\end{equation}
for $\varepsilon> 0$.
Since
\begin{equation}\label{eq:001}
\frac{\lambda}{H-\lambda}\leq\frac{\lambda f_3}{S}\leq |\lambda|\sqrt{S}\leq |\lambda|(\frac{S}{2\delta}
+\frac{\delta}{2})
\end{equation}
for $\delta>0$, we have from \eqref{eq:002} and \eqref{eq:001}
\begin{equation}\label{eq:003}
\aligned
&\ \ \ \int_M \biggl\{(1-\frac{|\lambda|}{2\delta})\eta^2S+\eta^2(1-\frac{1}{\varepsilon})|\nabla\log(H-\lambda)|^2
\biggl\}e^{-\frac{|X|^2}{2}}d\mu\\
&\leq\int_M\biggl(\varepsilon|\nabla \eta|^2+\bigl(1+\frac{|\lambda|}{2}\delta\bigl)\eta^2\biggl)e^{-\frac{|X|^2}{2}}d\mu.
\endaligned
\end{equation}
By choosing $\varepsilon$, $\delta$ and constant $c(n,\lambda)$, we get
\begin{equation}
\int_M\eta^2(S+|\nabla\log (H-\lambda)|^2)e^{-\frac{|X|^2}{2}}d\mu\leq c(n,\lambda)\int_M(|\nabla\eta|^2
 +\eta^2)e^{-\frac{|X|^2}{2}}d\mu.
\end{equation}
\end{proof}

\begin{proposition}\label{proposition 11}
Let $X: M\rightarrow \mathbb{R}^{n+1}$ be an $n$-dimensional complete  $\lambda$-hypersurface with $H-\lambda>0$ and
$\lambda(f_3-\frac{S}{H-\lambda})\geq0$. If $M$ has polynomial area growth, then
\begin{equation}\label{eq:225-2}
\aligned
&\ \ \ \int_M\langle\nabla S,\nabla\log(H-\lambda)\rangle e^{-\frac{|X|^2}{2}}d\mu\\
&=-\int_MS\mathcal{L}\log(H-\lambda)e^{-\frac{|X|^2}{2}}d\mu\\
&=\int_MS\biggl(S-1-\frac{\lambda}{H-\lambda}+|\nabla\log(H-\lambda)|^2\biggl)e^{-\frac{|X|^2}{2}}d\mu,
\endaligned
\end{equation}
and
\begin{equation}\label{eq:225-3}
\aligned
&\ \ \ \int_M|\nabla\sqrt{S}|^2e^{-\frac{|X|^2}{2}}d\mu\\
&=-\int_M\sqrt{S}\mathcal{L}\sqrt{S}e^{-\frac{|X|^2}{2}}d\mu\\
&\leq\int_M(S^2-S-\lambda f_3)e^{-\frac{|X|^2}{2}}d\mu.
\endaligned
\end{equation}
\end{proposition}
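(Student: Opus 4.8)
The two displays follow by combining the integration-by-parts identity of Corollary \ref{corollary 1} with the second-order formulas of Lemma \ref{lemma 2}, and, for the inequality in \eqref{eq:225-3}, with Lemma \ref{lemma 001}; the substance of the proof lies in checking that the weighted integrals involved are finite, so that Corollary \ref{corollary 1} is legitimately applicable.

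To prove \eqref{eq:225-2}, the plan is to apply Corollary \ref{corollary 1} with $u=S$ and $v=\log(H-\lambda)$, which is smooth since $H-\lambda>0$; this gives the first equality in \eqref{eq:225-2}. Substituting formula \eqref{eq:224-10}, namely $\mathcal{L}\log(H-\lambda)=1-S+\frac{\lambda}{H-\lambda}-|\nabla\log(H-\lambda)|^2$, and multiplying by $-S$ gives the second equality. For \eqref{eq:225-3} one takes $u=v=\sqrt S$, so that Corollary \ref{corollary 1} yields $\int_M\sqrt S\,\mathcal{L}\sqrt S\,e^{-\frac{|X|^2}{2}}d\mu=-\int_M|\nabla\sqrt S|^2e^{-\frac{|X|^2}{2}}d\mu$; substituting \eqref{eq:224-9} and using the first estimate $|\nabla\sqrt S|^2\le\sum_{i,j,k}h_{ijk}^2$ of Lemma \ref{lemma 001} gives
$$
-\sqrt S\,\mathcal{L}\sqrt S=|\nabla\sqrt S|^2-\sum_{i,j,k}h_{ijk}^2+S^2-S-\lambda f_3\le S^2-S-\lambda f_3,
$$
and integrating against $e^{-\frac{|X|^2}{2}}d\mu$ yields the stated inequality. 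A routine technical point is that $\sqrt S$ need not be $C^2$ on the zero set of $S$; there I would run the argument with $\sqrt{S+\varepsilon}$ in place of $\sqrt S$ and let $\varepsilon\downarrow 0$, the extra $\varepsilon$-dependent terms being harmless, as in \cite{[CM]}.

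The main obstacle is verifying the finiteness hypothesis of Corollary \ref{corollary 1}, that is, $\int_M(|u\nabla v|+|\nabla u|\,|\nabla v|+|u\,\mathcal{L}v|)e^{-\frac{|X|^2}{2}}d\mu<\infty$ for the two pairs above; this is exactly where polynomial area growth and the sign hypothesis $\lambda\big(f_3-\frac{S}{H-\lambda}\big)\ge0$ are used, through Proposition \ref{proposition 10}. Using cutoffs $\eta_R$ that equal $1$ on $B_R(0)\cap X(M)$, are supported in $B_{2R}(0)$, and satisfy $|\nabla\eta_R|\le C/R$, together with the standard fact (see \cite{[CM]}) that $\int_M e^{-\frac{|X|^2}{2}}d\mu<\infty$ for hypersurfaces of polynomial area growth, Proposition \ref{proposition 10} bounds its right-hand side uniformly in $R$, so letting $R\to\infty$ gives $\int_M\big(S+|\nabla\log(H-\lambda)|^2\big)e^{-\frac{|X|^2}{2}}d\mu<\infty$. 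Together with $0\le\frac{\lambda}{H-\lambda}\le|\lambda|\sqrt S$ from \eqref{eq:001} and with Lemma \ref{lemma 001} (which controls $|\nabla\sqrt S|$, hence $|\nabla S|=2\sqrt S\,|\nabla\sqrt S|$, by $\sum_{i,j,k}h_{ijk}^2$), this handles every integrand except the quadratic quantities $\int_M S^2e^{-\frac{|X|^2}{2}}d\mu$ and $\int_M\sum_{i,j,k}h_{ijk}^2e^{-\frac{|X|^2}{2}}d\mu$. Their finiteness I would obtain by feeding the $L^1$-bounds just established into the Simons-type identity \eqref{eq:224-6} against a further cutoff and absorbing the top-order terms; arranging the auxiliary small parameters so that no term of order $S^2$ or $\sum_{i,j,k}h_{ijk}^2$ is left on the wrong side of the inequality is the delicate step, and the one I expect to cost the most effort. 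Everything else is bookkeeping.
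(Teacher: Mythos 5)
Your architecture is the paper's: both displays are Corollary \ref{corollary 1} applied to $(u,v)=(S,\log(H-\lambda))$ and $(u,v)=(\sqrt S,\sqrt S)$, combined with Lemma \ref{lemma 2} and Lemma \ref{lemma 001}, with the real work being the integrability check driven by Proposition \ref{proposition 10} and an absorption argument in the Simons identity. But two of the steps you wave at are precisely where the argument either needs an idea you have not supplied or where your claim is false as stated. First, the assertion that $\int_M(S+|\nabla\log(H-\lambda)|^2)e^{-\frac{|X|^2}{2}}d\mu<\infty$, together with the $S^2$ and $\sum_{i,j,k}h_{ijk}^2$ bounds, handles every integrand in the hypothesis of Corollary \ref{corollary 1} is not correct: for $(u,v)=(S,\log(H-\lambda))$ you need $\int_M S|\nabla\log(H-\lambda)|^2e^{-\frac{|X|^2}{2}}d\mu<\infty$ (it appears both in $|\nabla S||\nabla v|\le|\nabla\sqrt S|^2+S|\nabla v|^2$ and in $|S\mathcal{L}v|$ via \eqref{eq:224-10}), and this does not follow from $\int S^2<\infty$ and $\int|\nabla\log(H-\lambda)|^2<\infty$ by any Cauchy--Schwarz. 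The paper obtains it by a \emph{second} application of Proposition \ref{proposition 10}, with the cutoff $\eta=\eta_j\sqrt S$, which is legitimate only after one knows $\sqrt S$ lies in the weighted $W^{1,2}$ space, i.e. after \eqref{eq:324-1} is in hand; this ordering of the steps is essential and absent from your sketch.

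Second, in the absorption argument for $\int_M S^2$: the integrated Simons-type inequality \eqref{eq:224-2} necessarily carries the term $\frac{2n}{n+1}\int\eta^2|\nabla H|^2$ coming from the second inequality of Lemma \ref{lemma 001}. This term is \emph{not} of order $S^2$ or $\sum_{i,j,k}h_{ijk}^2$ with an absorbable constant, so your goal of "no term of order $S^2$ or $\sum h_{ijk}^2$ left on the wrong side" cannot be achieved by tuning small parameters alone; the leftover $|\nabla H|^2$ must be killed by the $\lambda$-hypersurface equation itself, via $|\nabla H|\le\sqrt S\,|X|$ from \eqref{eq:224-3}, which reduces matters to $\int_M S(1+|X|^2)e^{-\frac{|X|^2}{2}}d\mu<\infty$. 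That last bound does not come from your cutoffs $\eta_R$ with $|\nabla\eta_R|\le C/R$ (those only give $\int_M Se^{-\frac{|X|^2}{2}}d\mu<\infty$); it requires Proposition \ref{proposition 10} with the growing cutoff $\eta=\eta_j|X|$, dominated convergence, and polynomial area growth. Without these two ingredients the scheme of feeding the $L^1$ bounds into the Simons identity does not close. The rest of your outline --- the $\sqrt{S+\varepsilon}$ regularization and the final use of Lemma \ref{lemma 001} to pass from the identity to the inequality in \eqref{eq:225-3} --- matches the paper.
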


\begin{proof}
Taking $\eta= \phi$ in \eqref{eq:001-1}, we have
\begin{equation}
\aligned
&\ \ \ \int_M \langle \nabla \phi^2, \nabla\log(H-\lambda)\rangle e^{-\frac{|X|^2}{2}}d\mu\\
&=-\int_M \phi^2(\mathcal{L}\log(H-\lambda))e^{-\frac{|X|^2}{2}}d\mu\\
&=\int_M \phi^2\biggl(S-1-\frac{\lambda}{H-\lambda}+|\nabla\log(H-\lambda)|^2\biggl)e^{-\frac{|X|^2}{2}}d\mu.
\endaligned
\end{equation}
Since
\begin{equation}
\langle \nabla \phi^2, \nabla\log(H-\lambda)\rangle \leq |\nabla \phi|^2+\phi^2|\nabla\log(H-\lambda)|^2,
\end{equation}
we derive
\begin{equation}
\int_M \phi^2Se^{-\frac{|X|^2}{2}}d\mu
\leq\int_M(|\nabla \phi|^2+\phi^2+\frac{\lambda}{H-\lambda}\phi^2)e^{-\frac{|X|^2}{2}}d\mu.
\end{equation}
Let $\phi=\eta\sqrt{S}$, where $\eta\geq0$ has a compact support, for $\alpha> 0$, we have
\begin{equation}\label{eq:224-1}
\aligned
&\ \ \ \int_M \eta^2S^2e^{-\frac{|X|^2}{2}}d\mu\\
&\leq\int_M\bigl\{\eta^2|\nabla\sqrt{S}|^2+2\eta\sqrt{S}|\nabla\eta||\nabla\sqrt{S}|\\
&\ \ \ +S|\nabla\eta|^2
+(1+\frac{\lambda}{H-\lambda})\eta^2S\bigl\}e^{-\frac{|X|^2}{2}}d\mu\\
&\leq\int_M(1+\alpha)\eta^2|\nabla\sqrt{S}|^2e^{-\frac{|X|^2}{2}}d\mu\\
&\ \ \ +\int_MS\bigl\{(1+\frac{1}{\alpha})|\nabla\eta|^2+
(1+\frac{\lambda}{H-\lambda})\eta^2\bigl\}e^{-\frac{|X|^2}{2}}d\mu.
\endaligned
\end{equation}
The lemma \ref{lemma 2} and lemma \ref{lemma 001} give the following inequality
\begin{equation}
\aligned
\mathcal{L}S&=2\sum_{i,j,k}h_{ijk}^2+2(1-S)S+2\lambda f_3\\
            &\geq\frac{2(n+3)}{n+1}|\nabla\sqrt{S}|^2-\frac{4n}{n+1}|\nabla H|^2+2S-2S^2+2\lambda f_3,
\endaligned
\end{equation}
Integrating this with $\frac{1}{2}\eta^2$ and using the lemma \ref{lemma 1}, we obtain
\begin{equation*}
\aligned
&\ \ \ -2\int_M\eta\sqrt{S}\langle \nabla\eta,\nabla\sqrt{S}\rangle e^{-\frac{|X|^2}{2}}d\mu\\
&\geq\int_M\biggl\{\eta^2\frac{(n+3)}{n+1}|\nabla\sqrt{S}|^2-\frac{2n}{n+1}\eta^2|\nabla H|^2+
S\eta^2-S^2\eta^2+\lambda f_3\eta^2\biggl)e^{-\frac{|X|^2}{2}}d\mu.
\endaligned
\end{equation*}
Since  $2ab\leq\epsilon a^2+\frac{b^2}{\epsilon}$ for $\epsilon>0$, we infer
\begin{equation}\label{eq:224-2}
\aligned
&\ \ \ \int_M\biggl\{\eta^2S^2+\frac{2n}{n+1}\eta^2|\nabla H|^2+\frac{1}{\epsilon}S|\nabla \eta|^2\biggl\}e^{-\frac{|X|^2}{2}}d\mu\\
&\geq\int_M\biggl\{(\frac{n+3}{n+1}-\epsilon)\eta^2|\nabla \sqrt{S}|^2+S\eta^2+\lambda f_3\eta^2\biggl\}e^{-\frac{|X|^2}{2}}d\mu.
\endaligned
\end{equation}
From  \eqref{eq:224-1}, \eqref{eq:224-2} and $\lambda\frac{S}{H-\lambda}\leq\lambda f_3$, by taking $\alpha$ and  $\epsilon$ such that  $\frac{1+\alpha}{\frac{n+3}{n+1}-\epsilon}> 0$, we have

\begin{equation*}
\aligned
 &\ \ \ \ \int_M \eta^2S^2e^{-\frac{|X|^2}{2}}d\mu\\
 &\leq \frac{1+\alpha}{\frac{n+3}{n+1}-\epsilon}\int_M \eta^2S^2e^{-\frac{|X|^2}{2}}d\mu
+\frac{2n}{n+1}\cdot\frac{1+\alpha}{\frac{n+3}{n+1}-\epsilon}\int_M\eta^2|\nabla H|^2e^{-\frac{|X|^2}{2}}d\mu\\
 &\ \ \ +\int_M\biggl[\frac{1+\alpha}{\frac{n+3}{n+1}-\epsilon}\bigl(\frac{1}{\epsilon}|\nabla \eta|^2-\eta^2\bigl)
  +(1+\frac{1}{\alpha})|\nabla \eta|^2+(1+\frac{\lambda}{H-\lambda})\eta^2\biggl]Se^{-\frac{|X|^2}{2}}d\mu\\
 &\ \ \ +\frac{1+\alpha}{\frac{n+3}{n+1}-\epsilon}\int_M(-\lambda f_3\eta^2)e^{-\frac{|X|^2}{2}}d\mu\\
 &\leq \frac{1+\alpha}{\frac{n+3}{n+1}-\epsilon}\int_M \eta^2S^2e^{-\frac{|X|^2}{2}}d\mu
 +\frac{2n}{n+1}\cdot\frac{1+\alpha}{\frac{n+3}{n+1}-\epsilon}\int_M\eta^2|\nabla H|^2e^{-\frac{|X|^2}{2}}d\mu\\
 &\ \ \  +\int_M\biggl\{\bigl[\frac{1+\alpha}{\frac{n+3}{n+1}-\epsilon}\times \frac{1}{\epsilon}+1+\frac{1}{\alpha}\bigl]|\nabla \eta|^2+(1-\frac{1+\alpha}{\frac{n+3}{n+1}-\epsilon})\eta^2\\
&\ \ \ \ \ \ \ \ \ \ \ +\frac{\lambda}{H-\lambda}(1-\frac{1+\alpha}{\frac{n+3}{n+1}-\epsilon})\eta^2\biggl\}Se^{-\frac{|X|^2}{2}}d\mu.
\endaligned
\end{equation*}
Using
\begin{equation*}
\lambda\frac{S}{H-\lambda}\leq\lambda f_3\leq|\lambda|S\sqrt{S}\leq\frac{1}{2\delta}|\lambda| S^2+\frac{\delta}{2}|\lambda| S,
\end{equation*}
for $\delta>0$, we obtain, by taking $\alpha$ and  $\epsilon$ such that   $1-\frac{1+\alpha}{\frac{n+3}{n+1}-\epsilon}> 0$
\begin{equation}
\aligned
&\ \ \ \biggl(1-\frac{1+\alpha}{\frac{n+3}{n+1}-\epsilon}\biggl)\biggl(1-\frac{|\lambda|}{2\delta}\biggl)
\int_M\eta^2S^2e^{-\frac{|X|^2}{2}}d\mu\\
&\leq\frac{2n}{n+1}\frac{1+\alpha}{\frac{n+3}{n+1}-\epsilon}\int_M\eta^2|\nabla H|^2e^{-\frac{|X|^2}{2}}d\mu\\
&\ \ \ +\int_M\biggl\{\biggl(\frac{1+\alpha}{\frac{n+3}{n+1}-\epsilon}\frac{1}{\epsilon}+1+\frac{1}{\alpha}
 \biggl)|\nabla\eta|^2+\biggl(1-\frac{1+\alpha}{\frac{n+3}{n+1}-\epsilon}\biggl)\eta^2\\
&\ \ \ \ +\biggl(1-\frac{1+\alpha}{\frac{n+3}{n+1}-\epsilon}\biggl)\eta^2
\frac{\delta}{2}|\lambda|\biggl\}Se^{-\frac{|X|^2}{2}}d\mu.
\endaligned
\end{equation}
Assuming $|\eta|\leq1$ and $|\nabla \eta|\leq1$,
choosing  $\delta$ such that
$\frac{|\lambda|}{2\delta}< 1$,  we have
\begin{equation}\label{eq:224-4}
\int_M\eta^2S^2e^{-\frac{|X|^2}{2}}d\mu\leq C(n,\lambda)\int_M(|\nabla H|^2+S)e^{-\frac{|X|^2}{2}}d\mu
\end{equation}
for some constant $C(n,\lambda)$ depending on $n$ and $\lambda$. Since $|\nabla H|\leq\sqrt{S}|X|$ holds from  \eqref{eq:224-3},
one has from \eqref{eq:224-4}
\begin{equation}\label{eq:224-5}
\int_M\eta^2S^2e^{-\frac{|X|^2}{2}}d\mu\leq C(n,\lambda)\int_MS(1+|X|^2)e^{-\frac{|X|^2}{2}}d\mu.
\end{equation}
Since $H-\lambda> 0$ and $\lambda f_3\geq\lambda\frac{S}{H-\lambda}$, let $\eta_j$ be one on $B_j$ and cut off linearly to zero from $\partial B_j$ to $\partial B_{j+1}$, where
$B_j=X(M)\cap B_j(0)$ with $B_j(0)$ is the Euclidean ball of radius $j$ centered at the origin.
Applying the proposition \ref{proposition 10} with $\eta=\eta_j|X|$, letting $j\rightarrow \infty$,
the  dominated  convergence theorem and the polynomial  area growth give that $\int_MS(1+|X|^2)e^{-\frac{|X|^2}{2}}d\mu<+\infty$.
Thus \eqref{eq:224-5} and the
dominated convergence theorem give that
$$\int_MS^2e^{-\frac{|X|^2}{2}}d\mu< +\infty.
$$
Hence, from (\ref{eq:224-2}), we also have
$$\int_M|\nabla\sqrt{S}|^2e^{-\frac{|X|^2}{2}}d\mu< +\infty.$$
We next prove $\int_M\sum\limits_{i,j,k}h_{ijk}^2e^{-\frac{|X|^2}{2}}d\mu<+\infty$. From \eqref{eq:224-6}, one has
\begin{equation}
\aligned
& \ \ \ \int_M\eta^2\sum_{i,j,k}h_{ijk}^2e^{-\frac{|X|^2}{2}}d\mu\\
&=\int_M\eta^2(S^2-S)e^{-\frac{|X|^2}{2}}d\mu-\int_M\lambda f_3\eta^2e^{-\frac{|X|^2}{2}}d\mu\\
&\ \ \ -\int_M2\eta\sqrt{S}\langle \nabla\eta,\nabla\sqrt{S}\rangle e^{-\frac{|X|^2}{2}}d\mu\\
&\leq C_0(n,\lambda)\int_M(\eta^2S^2+\eta^2S+|\nabla\eta|^2|\nabla\sqrt{S}|^2)e^{-\frac{|X|^2}{2}}d\mu\\
&<+\infty,
\endaligned
\end{equation}
where $C_0(n,\lambda)$ is constant depending on $n$ and $\lambda$.
The dominated convergence theorem gives that
\begin{equation}
\int_M\sum\limits_{i,j,k}h_{ijk}^2e^{-\frac{|X|^2}{2}}d\mu< +\infty.
\end{equation}
This shows that

\begin{equation}\label{eq:324-1}
\int_M(S+S^2+|\nabla\sqrt{S}|^2+\sum_{i,j,k}h_{ijk}^2)e^{-\frac{|X|^2}{2}}d\mu< +\infty.
\end{equation}
From \eqref{eq:324-1}, we have
\begin{equation*}
\int_M(S^2+|\nabla \sqrt{S}|^2)e^{-\frac{|X|^2}{2}}d\mu<+\infty,
\end{equation*}
that is, $\sqrt{S}$ is in the weighted $W^{1,2}$ space. Applying the proposition \ref{proposition 10} with $\eta=\eta_j\sqrt{S}$, letting $j\rightarrow \infty$,
 using the  dominated  convergence theorem, one has
\begin{equation}
\int_MS|\nabla\log(H-\lambda)|^2e^{-\frac{|X|^2}{2}}d\mu<+\infty.
\end{equation}
It follows that
\begin{equation}
\aligned
&\int_M|\nabla S||\nabla\log(H-\lambda)|e^{-\frac{|X|^2}{2}}d\mu\\
&\leq\int_M(|\nabla\sqrt{S}|^2+S|\nabla\log(H-\lambda)|^2)
e^{-\frac{|X|^2}{2}}d\mu< +\infty.
\endaligned
\end{equation}
\eqref{eq:224-10} gives that
\begin{equation}
\aligned
&\ \ \ \int_MS|\mathcal{L}\log(H-\lambda)|e^{-\frac{|X|^2}{2}}d\mu\\
&=\int_MS\biggl|1-S+\frac{\lambda}{H-\lambda}-|\nabla\log(H-\lambda)|^2\biggl|e^{-\frac{|X|^2}{2}}d\mu\\
&\leq C_1(n,\lambda)\int_M\biggl\{S^2+S+S|\nabla\log(H-\lambda)|^2\biggl\}e^{-\frac{|X|^2}{2}}d\mu\\
&<+\infty,
\endaligned
\end{equation}
where $C_1(n,\lambda)$ is constant. Thus, we obtain
\begin{equation}
\int_M\biggl\{S|\nabla\log(H-\lambda)|+|\nabla S||\nabla\log(H-\lambda)|+S\mathcal{L}\log(H-\lambda)|\biggl\}
e^{-\frac{|X|^2}{2}}d\mu< +\infty.
\end{equation}
By  applying the corollary \ref{corollary 1} to $S$ and $\log(H-\lambda)$, we get
\begin{equation}\label{eq:225-2}
\aligned
&\ \ \ \int_M\langle\nabla S,\nabla\log(H-\lambda)\rangle e^{-\frac{|X|^2}{2}}d\mu\\
&=-\int_MS\mathcal{L}\log(H-\lambda)e^{-\frac{|X|^2}{2}}d\mu\\
&=\int_MS\biggl(S-1-\frac{\lambda}{H-\lambda}+|\nabla\log(H-\lambda)|^2\biggl)e^{-\frac{|X|^2}{2}}d\mu.
\endaligned
\end{equation}
On one hand,  \eqref{eq:224-9} gives
\begin{equation}
\aligned
&\int_M\sqrt{S}|\mathcal{L}\sqrt{S}|e^{-\frac{|X|^2}{2}}d\mu\\
&=\int_M\biggl|\sum_{i,j,k}h_{ijk}^2-|\nabla\sqrt{S}|^2
+S(1-S)+\lambda f_3\biggl|e^{-\frac{|X|^2}{2}}d\mu\\
&\leq C_2(n,\lambda)\int_M\biggl(\sum_{i,j,k}h_{ijk}^2+|\nabla\sqrt{S}|^2+S+S^2\biggl)e^{-\frac{|X|^2}{2}}d\mu\\
&<+\infty.
\endaligned
\end{equation}
Hence
\begin{equation}
\int_M\biggl(\sqrt{S}|\nabla\sqrt{S}|+|\nabla\sqrt{S}|^2+\sqrt{S}|\mathcal{L}\sqrt{S}|\biggl)
e^{-\frac{|X|^2}{2}}d\mu<+\infty.
\end{equation}
On the other hand, we have from \eqref{eq:224-9} and the lemma \ref{lemma 001}
\begin{equation}\label{eq:225-1}
\mathcal{L}\sqrt{S}\geq\sqrt{S}-\sqrt{S}S+\frac{\lambda f_3}{\sqrt{S}}.
\end{equation}
Then we can apply the corollary \ref{corollary 1}
to $\sqrt{S}$ and $\sqrt{S}$ and obtain
\begin{equation}\label{eq:225-3}
\aligned
&\ \ \ \int_M|\nabla\sqrt{S}|^2e^{-\frac{|X|^2}{2}}d\mu\\
&=-\int_M\sqrt{S}\mathcal{L}\sqrt{S}e^{-\frac{|X|^2}{2}}d\mu\\
&\leq\int_M(S^2-S-\lambda f_3)e^{-\frac{|X|^2}{2}}d\mu.
\endaligned
\end{equation}

\end{proof}

\vskip3mm
\noindent
{\it Proof of Theorem \ref{theorem 10}}. Since $H-\lambda\geq0$ and $\mathcal{L}H-H\leq0$, if $\lambda\leq0$, we have from the maximum principle that
either $H-\lambda\equiv0$ or $H-\lambda>0$, if $H-\lambda\equiv0$, \eqref{eq:224-3} and \eqref{eq:224-11} give
that $\lambda=0=H$, then $M$ is a  self-shrinker of the mean curvature flow.
According to the results of Colding and Minicozzi \cite{[CM]}, $M$ is $\mathbb{R}^{n}$.
If $\lambda>0$ and $H-\lambda=0$ at some point $p\in M$, then we
see from $\lambda(f_3(H-\lambda)-S)\geq0$ that $S=0$ and $H=0$ at $p$, then $\lambda\equiv0$,
according to the results of Colding and Minicozzi \cite{[CM]}, we know that $M$ is $\mathbb{R}^{n}$.
Hence, for any $\lambda$, we have either $M$ is $\mathbb{R}^{n}$ or $H-\lambda>0$.

\noindent Next, we assume that $H-\lambda>0$.
From the proposition \ref{proposition 11}, we have
\begin{equation}\label{eq:324-2}
\aligned
&\ \ \ \int_M\langle\nabla S,\nabla\log(H-\lambda)\rangle e^{-\frac{|X|^2}{2}}d\mu\\
&=-\int_MS\mathcal{L}\log(H-\lambda)e^{-\frac{|X|^2}{2}}d\mu\\
&=\int_MS\biggl(S-1-\frac{\lambda}{H-\lambda}+|\nabla\log(H-\lambda)|^2\biggl)e^{-\frac{|X|^2}{2}}d\mu,
\endaligned
\end{equation}
and
\begin{equation}\label{eq:324-3}
\aligned
&\ \ \ \int_M|\nabla\sqrt{S}|^2e^{-\frac{|X|^2}{2}}d\mu\\
&=-\int_M\sqrt{S}\mathcal{L}\sqrt{S}e^{-\frac{|X|^2}{2}}d\mu\\
&\leq\int_M(S^2-S-\lambda f_3)e^{-\frac{|X|^2}{2}}d\mu.
\endaligned
\end{equation}
Substituting \eqref{eq:324-3} into \eqref{eq:324-2} and using $\lambda f_3\geq\lambda\frac{S}{H-\lambda}$, one has
\begin{equation}
\aligned
0&\geq\int_M\biggl\{|\nabla\sqrt{S}|^2-2\sqrt{S}\langle \nabla\sqrt{S},\nabla\log(H-\lambda)\rangle +S|\nabla\log(H-\lambda)|^2\\
&\ \ \ \ \ +\lambda f_3-\lambda\frac{S}{H-\lambda}\biggl\}e^{-\frac{|X|^2}{2}}d\mu\\
&\geq\int_M\bigl|\nabla\sqrt{S}-\sqrt{S}\nabla\log(H-\lambda)\bigl|^2e^{-\frac{|X|^2}{2}}d\mu.
\endaligned
\end{equation}
Hence we conclude that $\nabla\sqrt{S}=\sqrt{S}\nabla\log(H-\lambda)$. Thus, we obtain
\begin{equation}
\sqrt{S}=\beta(H-\lambda)
\end{equation}
for a constant $\beta>0$. Since all inequalities in above equations become equalities, we obtain
\begin{equation}\label{eq:225-4}
\sum_{i,j,k}h_{ijk}^2=|\nabla\sqrt{S}|^2,\ \ \lambda f_3=\lambda\frac{S}{H-\lambda}.
\end{equation}
From the lemma \ref{lemma 001} and \eqref{eq:225-4}, we know

\vskip2mm
(1) There is a constant $C_k$ such that $h_{iik}=C_k\lambda_i$ for every $i$ and $k$.

\vskip2mm
(2) If $i\neq j$, then $h_{ijk}=0$, that is, $h_{ijk}=0$ unless $i=j=k$ since $h_{ijk}=h_{ikj}$.

\vskip2mm
\noindent
If $\lambda_i\neq0$ and $j\neq i$, then $0=h_{iij}=C_j\lambda_i$.  It follows that $C_j=0$. If the rank of matrix
$(h_{ij})$ is at least two at $p$, then $C_j=0$ for $j\in\{1,2,\cdots,n\}$. Hence,  we have $h_{ijk}(p)=0$.

\noindent
We next consider two cases.

\vskip4mm
\noindent {\bf Case 1}: The rank of matrix $(h_{ij})$ is greater than one at $p$.
\vskip2mm

\noindent
In this case, we will prove that the rank of $(h_{ij})$ is at least two everywhere. In fact, for $q\in M$, let
$\lambda_1(q)$ and $\lambda_2(q)$ be the two eigenvalues of $(h_{ij})(q)$ that are largest in absolute value and define the set
\begin{equation}
\Omega=\{q\in M|\lambda_1(q)=\lambda_1(p), \lambda_2(q)=\lambda_2(p)\}.
\end{equation}
Then $p\in\Omega$, since $\lambda_i$'s are continuous, so $\Omega$ is closed.
Given any point $q\in \Omega$, it follows that the rank of $(h_{ij})$ is at least two at $q$.
Hence there is an open set $U$, $q\in U $, where the rank of $(h_{ij})$ is at least two.
On $U$, we have $h_{ijk}=0$ and the eigenvalues of $(h_{ij})$
are constant on $U$. Thus, $U\subset\Omega$, $\Omega$ is open. Since $M$ is connected, we have $\Omega=M$
and $h_{ijk}\equiv0$ on $M$. We know that $M=S^k(r)\times \mathbb{R}^{n-k}$, where $k>1$.

\vskip4mm
\noindent {\bf Case 2}: The rank of matrix $(h_{ij})$ is one.

\vskip2mm
\noindent
From Case 1, we know that the rank of $(h_{ij})$ is one everywhere. Hence $S=H^2$.
On the other hand, $S=\beta^2(H-\lambda)^2$, hence $H^2=\beta^2(H-\lambda)^2$.
If $\lambda=0$, then $M$ is a  self-shrinker of the mean curvatue flow. If $\lambda\neq0$, then we have $H$ is constant.
$M$ is $S^1(r)\times \mathbb{R}^{n-1}$ from the proposition \ref{prop 2.2}. This completes the proof of Theorem \ref{theorem 10}.
$$\eqno{\Box}$$

\vskip 5mm

\section{Properness and polynomial  area growth for $\lambda$-hypersurfaces}

\noindent
For $n$-dimensional complete and non-compact Riemannian manifolds with nonnegative Ricci curvature, the well-known
theorem of Bishop and Gromov says that  geodesic balls have at most polynomial area growth:
$$
{\rm Area} (B_r(x_0))\leq Cr^n.
$$
For $n$-dimensional complete and non-compact gradient shrinking Ricci soliton, Cao and Zhou \cite{CZ} have proved
geodesic balls have at most polynomial area growth.
For self-shrinkers, Ding and Xin \cite{[DX1]} proved that any complete non-compact properly immersed self-shrinker in the Euclidean space has polynomial area growth. X. Cheng and Zhou \cite{[CZ]} showed that any complete immersed self-shrinker with polynomial area growth in the Euclidean space is proper. Hence any complete immersed self-shrinker is proper if and only if it has polynomial area growth.

\noindent It is our purposes in this section to study the area growth for
$\lambda$-hypersurfaces.
First of all,  we study the equivalence of properness and polynomial area growth for  $\lambda$-hypersurfaces.
If $X: M\rightarrow \mathbb{R}^{n+1}$ is an $n$-dimensional hypersurface in $\mathbb{R}^{n+1}$,
we say $M$ has polynomial area growth if there exist constant $C$ and $d$ such that for all $r\geq 1$,
\begin{equation}
{\rm Area}(B_r(0)\cap X(M))=\int_{B_r(0)\cap X(M)}d\mu\leq Cr^d,
\end{equation}
where $B_r(0)$ is a round ball in $\mathbb{R}^{n+1}$ with radius $r$ and centered at the origin.

\begin{theorem}\label{theorem 9.1}
Let $X: M\rightarrow \mathbb{R}^{n+1}$ be a complete and non-compact properly immersed
$\lambda$-hypersurface in the Euclidean space $\mathbb{R}^{n+1}$.
Then, there is a positive constant $C$ such that for $r\geq1$,
\begin{equation}
{\rm Area}(B_r(0)\cap X(M))=\int_{B_r(0)\cap X(M)}d\mu\leq Cr^{n+\frac{\lambda^2}2-2\beta-\frac{\inf H^2}2},
\end{equation}
where  $\beta=\frac{1}{4}\inf(\lambda-H)^2$.
\end{theorem}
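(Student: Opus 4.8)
The plan is to reduce the statement to a single pointwise inequality for the intrinsic Laplacian of $|X|^{2}$ on $M$, and then to obtain polynomial area growth by a volume–comparison argument on extrinsic balls, in the spirit of Ding--Xin's theorem that properly immersed self-shrinkers have polynomial volume growth.

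The first step is the key computation. Since $\Delta_M X=HN$ and a $\lambda$-hypersurface satisfies $\langle X,N\rangle=\lambda-H$, Lemma \ref{lemma 0} (or a direct computation) gives
\[
\Delta_M|X|^{2}=2n+2H(\lambda-H)=2n+\lambda^{2}-H^{2}-(\lambda-H)^{2}\le 2n+\lambda^{2}-\inf_M H^{2}-\inf_M(\lambda-H)^{2}.
\]
Writing $c_{0}:=n+\tfrac{\lambda^{2}}{2}-2\beta-\tfrac{1}{2}\inf H^{2}$ for the exponent in the statement and recalling $4\beta=\inf(\lambda-H)^{2}$, this says $\Delta_M|X|^{2}\le 2c_{0}$ everywhere on $M$. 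One also notes that $|\nabla_M|X|^{2}|=2|X^{T}|\le 2|X|$, so $\rho:=|X|$ is a proper, $1$-Lipschitz exhaustion function of $M$ (properness of $\rho$ being exactly the hypothesis that the immersion is proper).

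Next I would set $V(r):=\mathrm{Area}(B_{r}(0)\cap X(M))$ and run the comparison. For a.e.\ $r$ (regular values of $\rho$, by Sard) the set $M\cap\partial B_{r}(0)$ is a smooth compact hypersurface of $M$, and the divergence theorem applied to $\nabla_M|X|^{2}$ on the domain $M\cap B_{r}(0)$ (whose boundary $M\cap\partial B_r(0)$ has outward conormal $X^{T}/|X^{T}|$) gives
\[
\int_{M\cap\partial B_{r}(0)}2|X^{T}|\,d\mathcal H^{n-1}=\int_{M\cap B_{r}(0)}\Delta_M|X|^{2}\,d\mu\le 2c_{0}\,V(r).
\]
On the other hand the coarea formula yields $V'(r)=r\int_{M\cap\partial B_{r}(0)}|X^{T}|^{-1}\,d\mathcal H^{n-1}$, and Cauchy--Schwarz on the level set gives $\bigl(\mathcal H^{n-1}(M\cap\partial B_{r})\bigr)^{2}\le\bigl(\int|X^{T}|\bigr)\bigl(\int|X^{T}|^{-1}\bigr)\le \tfrac{c_{0}}{r}\,V(r)V'(r)$. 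Together with $V'(r)\ge\mathcal H^{n-1}(M\cap\partial B_{r})$ (from $|X^{T}|\le r$), these relations are then combined into a differential inequality $\frac{d}{dr}\log V(r)\le \frac{c_{0}}{r}+(\text{an integrable error})$ for $r$ large, which integrates to $V(r)\le Cr^{c_{0}}$ for $r\ge1$. An equivalent route is to control $E(r):=\int_{M\cap B_{r}}|\nabla_M\rho|^{2}\,d\mu$, which satisfies $rE'(r)\le c_{0}V(r)$ and $0\le V(r)-E(r)=\int_{M\cap B_{r}}\langle X,N\rangle^{2}|X|^{-2}\,d\mu$, and then to show $\int_{1}^{\infty}r^{-c_{0}-1}(V(r)-E(r))\,dr<\infty$.

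The main obstacle is exactly this last step. Unlike a distance function, $\rho=|X|$ restricted to $M$ has $|\nabla_M\rho|\le1$ but with possible degeneracy at critical points of $\rho|_M$, so the level sets $M\cap\partial B_{r}(0)$ are not as well behaved as geodesic spheres and the classical Bishop--Gromov computation does not apply verbatim; closing the argument requires the Sard reduction, the Cauchy--Schwarz trade-off between $\int|X^{T}|$ and $\int|X^{T}|^{-1}$ on the level sets, and careful bookkeeping of the error terms so that the resulting ODE still integrates to the sharp exponent $c_{0}$ (sharp because $S^{k}(r_{0})\times\mathbb R^{n-k}$ gives $c_{0}=n-k$, which is exactly its area-growth rate). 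A minor amount of extra care is needed when $\beta=0$ and to absorb into $C$ the contributions of small radii and of the compact set $\{X=0\}$.
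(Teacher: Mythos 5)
Your starting point is correct, and it is in fact what the paper uses in disguise: with $f=\frac{|X|^2}{4}$ and $\beta=\frac14\inf(\lambda-H)^2$ the paper verifies $|\nabla (f-\beta)|^2\le f-\beta$ and $\Delta (f-\beta)-|\nabla (f-\beta)|^2+(f-\beta)\le k$ with $2k=n+\frac{\lambda^2}{2}-2\beta-\frac{\inf H^2}{2}=c_0$, notes properness, and then simply invokes Theorem 2.1 of X.~Cheng and Zhou \cite{[CZ]}, whose conclusion is the bound $Cr^{2k}$. You instead try to reprove that volume estimate directly, and there is a genuine gap at exactly the point you call ``the main obstacle''. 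Your listed relations control $\int_{\partial B_r\cap X(M)}|X^T|\,d\sigma\le c_0V(r)$ and give \emph{lower} bounds on $V'(r)$ (both the Cauchy--Schwarz step between $\int |X^T|$ and $\int |X^T|^{-1}$ and the inequality $V'(r)\ge \mathcal{H}^{n-1}(\partial B_r\cap X(M))$ go in that direction), whereas what is needed is an \emph{upper} bound on $V'(r)=r\int_{\partial B_r\cap X(M)}|X^T|^{-1}d\sigma$; nothing you write controls $\int |X^T|^{-1}$ from above, so the claimed inequality $\frac{d}{dr}\log V(r)\le \frac{c_0}{r}+(\text{integrable error})$ does not follow. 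Moreover, the reduction to the single pointwise bound $\Delta_M|X|^2\le 2c_0$ (plus $|\nabla_M|X||\le1$ and properness) cannot suffice even in principle: a radial function on hyperbolic space with $\Delta f\equiv 2c_0$ satisfies all three conditions while its sublevel sets $\{f\le r^2\}$ have area growing like $e^{r^2/(2c_0)}$. The drift information encoded in $1-|\nabla_M|X||^2=\frac{(\lambda-H)^2}{|X|^2}$ (equivalently, the paper's inequality $\Delta\bar f-|\nabla\bar f|^2+\bar f\le k$) must be used quantitatively, and your sketch does not do so.

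The missing mechanism is the Cao--Zhou/Cheng--Zhou device, not bookkeeping. From the divergence identity $\int_{B_r\cap X(M)}(n+\lambda H-H^2)\,d\mu=rV'(r)-\int_{\partial B_r\cap X(M)}\frac{(\lambda-H)^2}{|X^T|}\,d\sigma$, first use $(\lambda-H)^2=\langle X,N\rangle^2\le r^2$ and $|X^T|\le r$ on the level set to see the boundary term is at most $rV'(r)$; this yields the a priori bound $\int_{B_r\cap X(M)}(H-\frac{\lambda}{2})^2d\mu\le (n+\frac{\lambda^2}{4})V(r)$, hence $P(r):=\int_{B_r\cap X(M)}(\lambda-H)^2d\mu\le(2n+\lambda^2)V(r)$. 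Next, since the left-hand side of the identity is $\le c_0V(r)$ and, by the coarea formula, the boundary term equals $P'(r)/r$, you get $rV'(r)\le c_0V(r)+\frac{P'(r)}{r}$; dividing by $r^{c_0+1}$, integrating by parts, and absorbing the $P$-terms via $P\le(2n+\lambda^2)V$ together with the smallness of $\int_{r_0}^{\infty}r^{-3}dr$ for $r_0$ large closes the iteration and gives $V(r)\le Cr^{c_0}$ (with the sharp exponent, as your cylinder check confirms). Without this chain --- or without simply verifying the hypotheses of the quoted Cheng--Zhou theorem as the paper does --- your argument does not close.
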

\begin{proof}
Since $X: M\rightarrow \mathbb{R}^{n+1}$
is  a complete and non-compact properly immersed
$\lambda$-hypersurface in the Euclidean space $\mathbb{R}^{n+1}$, we
have
$$
\langle X, N\rangle +H=\lambda.
$$
Defining $f=\frac{|{X}|^2}{4}$, we have
\begin{equation}
f-|{\nabla}f|^2=\frac{|{X}|^2}{4}-\frac{|{X}^{T}|^2}{4}
=\frac{|{X}^{\perp}|^2}{4}=\frac14(\lambda-{H})^2,
\end{equation}
\begin{equation}
\aligned
{\Delta} f&=\frac{1}{2}(n+{H}\langle N,{X}\rangle) \\
        &=\frac{1}{2}(n+\lambda \langle N,{X}\rangle-\langle N,{X}\rangle^2)\\
        &=\frac{1}{2}n+\frac{\lambda^2}4-\frac{H^2}4-f+|{\nabla}f|^2.
\endaligned
\end{equation}
Hence, we obtain
\begin{equation}
|{\nabla}{(f-\beta)}|^2\leq (f-\beta),
\end{equation}
\begin{equation}
{\Delta}{(f-\beta)}-|{\nabla}{(f-\beta)}|^2+ {(f-\beta)}\leq (\frac{n}{2}+\frac{\lambda^2}4-\beta -\frac{\inf H^2}4 ).
\end{equation}
Since the  immersion $X$ is proper, we know that $\overline{f}=f-\beta$ is proper.
Applying the theorem 2.1 of X. Cheng and Zhou \cite{[CZ]} to $\overline{f}=f-\beta$
with $k=(\frac{n}{2}+\frac{\lambda^2}4-\beta-\frac{\inf H^2}4)$, we obtain
 \begin{equation}
{\rm Area}(B_r(0)\cap X(M))=\int_{B_r(0)\cap X(M)}d\mu\leq Cr^{n+\frac{\lambda^2}2-2\beta-\frac{\inf H^2}2},
\end{equation}
where  $\beta=\frac{1}{4}\inf(\lambda-H)^2$ and $C$ is  a constant.
\end{proof}

\begin{remark}
The estimate in our theorem \ref{theorem 9.1} is the best possible because the cylinders $S^k(r_0)\times \mathbb{R}^{n-k}$
satisfy the equality.
\end{remark}
\begin{remark}
By making use of the same assertions as  in X. Cheng and Zhou \cite{[CZ]} for self-shrinkers,  we can prove
the weighted area of  a complete and non-compact properly immersed
$\lambda$-hypersurface in the Euclidean space $\mathbb{R}^{n+1}$
is bounded.
\end{remark}

\noindent
By making use of  to the same assertions  as in X. Cheng and Zhou \cite{[CZ]} for self-shrinkers,
we can prove the following theorem. We will leave it  for readers.
\begin{theorem}
If  $X: M\rightarrow \mathbb{R}^{n+1}$ is  an $n$-dimensional complete immersed $\lambda$-hypersurface with polynomial area  growth, then
$X: M\rightarrow \mathbb{R}^{n+1}$ is proper.
\end{theorem}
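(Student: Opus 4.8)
The plan is to adapt the argument of X. Cheng and Zhou \cite{[CZ]} for self-shrinkers; the only new input is that on a $\lambda$-hypersurface the function $f=\frac{|X|^2}{4}$ satisfies estimates of the same form as in the self-shrinker case. Recall that $X: M\rightarrow\mathbb{R}^{n+1}$ is proper precisely when $X^{-1}(\overline{B_R(0)})$ is compact for every $R>0$, i.e. when $|X|$ is a proper function on $M$. As in the proof of Theorem \ref{theorem 9.1}, the identity $\langle X,N\rangle+H=\lambda$ yields
\[
f-|\nabla f|^2=\frac14(\lambda-H)^2\ge 0,
\]
so that $\sqrt f$ is $\frac12$-Lipschitz on $M$; and it yields $|\mathbf{H}|=|H|=|\lambda-\langle X,N\rangle|\le|\lambda|+|X|$ at every point of $M$.

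First I would note that polynomial area growth forces the weighted volume to be finite. Splitting $M$ into the regions $\Omega_k=\{k\le|X|<k+1\}$, $k\ge 0$, and using ${\rm Area}(\{|X|<k+1\})\le C(k+1)^d$, one gets
\[
\int_M e^{-\frac{|X|^2}{4}}\,d\mu=\sum_{k\ge 0}\int_{\Omega_k}e^{-\frac{|X|^2}{4}}\,d\mu\le\sum_{k\ge 0}e^{-\frac{k^2}{4}}\,C(k+1)^d<+\infty .
\]
Then I would argue by contradiction. If $X$ is not proper, there is $R_0>0$ for which $X^{-1}(\overline{B_{R_0}(0)})$ is a noncompact closed subset of the complete manifold $M$; it is therefore unbounded in the intrinsic distance $d_M$, so one can choose $p_i\in X^{-1}(\overline{B_{R_0}(0)})$ with $d_M(p_i,p_j)\ge 4$ for $i\neq j$, whence the geodesic balls $B_1^M(p_i)\subset M$ are pairwise disjoint and $|X(p_i)|\le R_0$. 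Since $\sqrt f$ is $\frac12$-Lipschitz, every point of $B_1^M(p_i)$ satisfies $|X|=2\sqrt f\le R_0+1$, and hence $|\mathbf{H}|\le|\lambda|+R_0+1=:H_0$ on $B_1^M(p_i)$, with $H_0$ independent of $i$.

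The crucial step is a \emph{uniform} lower bound on the volume of small geodesic balls. Applying the Michael--Simon Sobolev inequality (equivalently, the intrinsic isoperimetric inequality) on the immersed hypersurface $M$ to cut-off functions supported in $B_\rho^M(p_i)$ and using $|\mathbf{H}|\le H_0$ there, one obtains $\frac{d}{d\rho}\bigl({\rm Area}\,B_\rho^M(p_i)\bigr)^{1/n}\ge c(n)$ for $\rho$ in an interval $(0,\rho_0]$, where $\rho_0=\rho_0(n,H_0)\le 1$ does not depend on $i$; integrating gives ${\rm Area}\,B_{\rho_0}^M(p_i)\ge\delta_1>0$ for all $i$. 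On $B_{\rho_0}^M(p_i)\subset B_1^M(p_i)$ we still have $|X|\le R_0+1$, so $e^{-|X|^2/4}\ge e^{-(R_0+1)^2/4}=:\varepsilon_0>0$; as the $B_{\rho_0}^M(p_i)$ are pairwise disjoint,
\[
\int_M e^{-\frac{|X|^2}{4}}\,d\mu\ge\sum_i\int_{B_{\rho_0}^M(p_i)}e^{-\frac{|X|^2}{4}}\,d\mu\ge\sum_i\varepsilon_0\,\delta_1=+\infty ,
\]
contradicting the finiteness proved above. Hence $X$ is proper.

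The main obstacle is exactly this uniform lower volume bound for the balls $B_{\rho_0}^M(p_i)$: with no a priori control on the second fundamental form it cannot be obtained by naive comparison, and one must combine the Michael--Simon inequality with the feature peculiar to $\lambda$-hypersurfaces that $|\mathbf{H}|\le|\lambda|+|X|$, which makes the mean curvature uniformly bounded on the relevant balls. This is precisely the device of Cheng and Zhou \cite{[CZ]} for self-shrinkers, and it transfers once the two estimates $f-|\nabla f|^2=\frac14(\lambda-H)^2$ and $|\mathbf{H}|\le|\lambda|+|X|$ are in place; the remaining points (producing the $d_M$-separated sequence $\{p_i\}$ from non-properness, and deriving the differential inequality for $({\rm Area}\,B_\rho^M)^{1/n}$ from Michael--Simon) are standard.
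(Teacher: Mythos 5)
Your proposal is correct and is essentially the argument the paper intends: the paper itself omits the proof and simply refers the reader to the self-shrinker argument of X.~Cheng and Zhou \cite{[CZ]}, and your adaptation supplies exactly the two new ingredients needed there, namely $f-|\nabla f|^2=\frac14(\lambda-H)^2\ge 0$ (so $\sqrt f$ is $\frac12$-Lipschitz) and $|H|\le|\lambda|+|X|$, after which the finite weighted volume from polynomial area growth, the intrinsically separated points $p_i$ in $X^{-1}(\overline{B_{R_0}(0)})$, and the Michael--Simon lower volume bound for $B_{\rho_0}^M(p_i)$ produce the contradiction just as in the self-shrinker case. No gaps.
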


\vskip 2mm

\section{A lower bound growth of the area for  $\lambda$-hypersurfaces}

\noindent
For $n$-dimensional complete and non-compact Riemannian manifolds with nonnegative Ricci curvature, the well-known
theorem of Calabi and Yau says that geodesic balls have at least linear area growth:
$$
{\rm Area} (B_r(x_0))\geq Cr.
$$
Cao and Zhu \cite{CZu} have proved that  $n$-dimensional complete and non-compact gradient shrinking Ricci soliton
must have infinite volume. Furthermore, Munteanu and Wang \cite{MW} have proved that
areas of geodesic balls for  $n$-dimensional complete and non-compact gradient shrinking Ricci soliton
 has at least linear growth. For self-shrinkers, Li and Wei \cite{[LW2]} proved that any complete and non-compact proper self-shrinker has at least linear area growth.

\noindent In this section, we study the lower bound growth of the area for $\lambda$-hypersurfaces.
The following lemmas play a very important role in order to prove our results.

\begin{lemma}\label{lemma:4-20-1}
Let $X: M\rightarrow \mathbb{R}^{n+1}$ be an $n$-dimensional complete noncompact proper $\lambda$-hypersurface, then
there exist constants $C_1(n,\lambda)$ and $c(n,\lambda)$ such that for all $t\geq C_1(n,\lambda)$,
\begin{equation}
{\rm Area}(B_{t+1}(0)\cap X(M))-{\rm Area}(B_t(0)\cap X(M))\leq c(n,\lambda)\frac{{\rm Area}(B_t(0)\cap X(M))}{t}
\end{equation}
and
\begin{equation}
{\rm Area}(B_{t+1}(0)\cap X(M))\leq 2{\rm Area}(B_{t}(0)\cap X(M)).
\end{equation}
\end{lemma}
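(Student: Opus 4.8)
\textit{Proof proposal.} The plan is to work with the extrinsic radial function $\rho=|X|$ on $M$ and with $|X|^2$, using the $\lambda$-hypersurface equation $H+\langle X,N\rangle=\lambda$. Write $D(s)=B_s(0)\cap X(M)$ and $V(s)={\rm Area}(D(s))$; since the immersion is proper, each $D(s)$ is compact, so $V(s)<\infty$ and $V$ is nondecreasing and right-continuous. From the identities already computed in the proof of Theorem \ref{theorem 9.1} one has the two pointwise facts
\begin{equation*}
|\nabla\rho|^2=1-\frac{(\lambda-H)^2}{|X|^2},\qquad \Delta(|X|^2)=2n+2\lambda H-2H^2\le 2n+\tfrac{\lambda^2}{2},
\end{equation*}
and hence also ${\rm div}_M(X^{T})=\tfrac12\Delta(|X|^2)=n+\lambda H-H^2\le n+\tfrac{\lambda^2}{4}$ together with $(\lambda-H)^2+\Delta(|X|^2)=\lambda^2+2n-H^2\le\lambda^2+2n$.

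The first step I would carry out is to bound the area of the annulus $A=\{t<|X|\le t+1\}$ by splitting $1=|\nabla\rho|^2+(1-|\nabla\rho|^2)$. Applying the divergence theorem to $X^{T}$ on $D(s)$ (for a.e.\ $s$, a regular value of $\rho$, using $\langle X^{T},\nu\rangle=|X^{T}|=s|\nabla\rho|$ on $\partial D(s)$) gives $s\int_{\{\rho=s\}}|\nabla\rho|\,d\sigma=\int_{D(s)}{\rm div}_M(X^{T})\,d\mu\le(n+\tfrac{\lambda^2}{4})V(s)$, so the coarea formula yields
\begin{equation*}
\int_A|\nabla\rho|^2\,d\mu=\int_t^{t+1}\!\Big(\int_{\{\rho=s\}}|\nabla\rho|\,d\sigma\Big)\,ds\le\frac{n+\lambda^2/4}{t}\,V(t+1).
\end{equation*}
For the complementary term, the divergence theorem on $D(t+1)$ gives $\int_{D(t+1)}\Delta(|X|^2)\,d\mu=2\int_{\partial D(t+1)}|X^{T}|\,d\sigma\ge0$, whence $\int_{D(t+1)}(\lambda-H)^2\,d\mu\le(\lambda^2+2n)V(t+1)$, and therefore
\begin{equation*}
\int_A\big(1-|\nabla\rho|^2\big)\,d\mu=\int_A\frac{(\lambda-H)^2}{|X|^2}\,d\mu\le\frac{1}{t^2}\int_{D(t+1)}(\lambda-H)^2\,d\mu\le\frac{\lambda^2+2n}{t^2}\,V(t+1).
\end{equation*}

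Adding the two estimates, for $t\ge1$ one obtains $V(t+1)-V(t)=\int_A1\,d\mu\le\frac{\tilde c}{t}\,V(t+1)$ with $\tilde c=\tilde c(n,\lambda)=3n+\tfrac54\lambda^2$. Rearranging gives $V(t+1)(1-\tilde c/t)\le V(t)$, so for $t\ge2\tilde c$ we get $V(t+1)\le\frac{t}{t-\tilde c}V(t)$ and hence $V(t+1)-V(t)\le\frac{\tilde c}{t-\tilde c}V(t)\le\frac{2\tilde c}{t}V(t)$. This is the first inequality with $c(n,\lambda)=2\tilde c$ and $C_1(n,\lambda)=2\tilde c$; the second follows at once, since for $t\ge C_1=c(n,\lambda)$ we have $c(n,\lambda)/t\le1$ and therefore $V(t+1)\le2V(t)$. (Critical values of $\rho$, the coarea formula for the Lipschitz function $\rho$, and right-continuity of $V$ cause no trouble and are handled by the usual approximation by regular values; note $M$ non-compact forbids $V$ from having jumps.)

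The one point that is not routine is that the divergence theorem by itself only ever bounds $\int_{\{\rho=s\}}|\nabla\rho|\,d\sigma$, i.e.\ it controls $V'$ from \emph{below}, never from above, because the honest formula for $V'(s)$ involves $|\nabla\rho|^{-1}$, which is unbounded precisely where $|X^{\perp}|=|\lambda-H|$ is large. The device that circumvents this is the splitting $1=|\nabla\rho|^2+(1-|\nabla\rho|^2)$: it trades the dangerous factor $|\nabla\rho|^{-1}$ in the coarea formula for the harmless factor $|\nabla\rho|$, at the cost of the error term $\int_A(1-|\nabla\rho|^2)\,d\mu$, and it is exactly this error term that is controlled by the $\lambda$-hypersurface structure through $\Delta(|X|^2)\le2n+\tfrac{\lambda^2}{2}$ combined with $(\lambda-H)^2=|X^{\perp}|^2$.
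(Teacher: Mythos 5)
Your argument is correct, and it reaches the lemma by a genuinely different route than the paper. The paper differentiates the area: starting from $\tfrac12\Delta|X|^2=n+\lambda H-H^2$ and the coarea identity $\bigl({\rm Area}(B_r(0)\cap X(M))\bigl)'=r\int_{\partial(B_r(0)\cap X(M))}\frac{1}{|X^{T}|}d\sigma$, it derives the exact relation between $(n+\tfrac{\lambda^2}{4}){\rm Area}-r\,{\rm Area}'$, the ball integral of $(H-\tfrac{\lambda}{2})^2$ and the boundary integral of $\frac{(H-\lambda)^2}{|X^T|}$, rewrites this as a quasi-monotonicity formula for $r^{-n-\frac{\lambda^2}{4}}{\rm Area}(B_r(0)\cap X(M))$, and integrates it from $t$ to $t+1$, using an integration by parts in $r$ to convert the singular boundary terms into ball integrals controlled by $\int_{B_r}(H-\lambda)^2\,d\mu\leq(2n+\lambda^2)\,{\rm Area}(B_r(0)\cap X(M))$. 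You instead estimate the annulus area directly, splitting $1=|\nabla\rho|^2+(1-|\nabla\rho|^2)$, bounding the first piece by coarea plus the divergence theorem applied to $X^T$ on sublevel sets, and the second by the same $L^2$ bound on $H-\lambda$ (which you obtain more directly from $\int\Delta|X|^2\,d\mu\geq0$), then absorbing $V(t+1)$ into $V(t)$ for $t$ large. The two proofs share the same two ingredients (the identity $\tfrac12\Delta|X|^2=n+\lambda H-H^2$ and the resulting mean-curvature $L^2$ estimate on balls), but your decomposition avoids differentiating ${\rm Area}(B_r(0)\cap X(M))$ in $r$ and never touches integrands with $|X^T|$ in the denominator, which is the delicate point you correctly identify; the price is that you only get $V(t+1)-V(t)\leq\tfrac{\tilde c}{t}V(t+1)$ and must rearrange, whereas the paper's weighted monotonicity is a slightly stronger statement in itself (and closer to the self-shrinker arguments of Cheng--Zhou and Li--Wei it generalizes). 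The differing explicit constants are immaterial, since the lemma only asserts existence of $C_1(n,\lambda)$ and $c(n,\lambda)$; the minor measure-theoretic points (regular values of $\rho$, applying the divergence theorem at the level $t+1$ by exhausting with regular values) are handled exactly as in the paper's own proof and cause no difficulty.
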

\begin{proof}
Since $X: M\rightarrow \mathbb{R}^{n+1}$ is a complete $\lambda$-hypersurface, one has
\begin{equation}\label{eq:4-20-1}
\frac{1}{2}\Delta |X|^2=n+H\langle N,X\rangle=n+H\lambda-H^2.
\end{equation}
Integrating \eqref{eq:4-20-1} over $B_r(0)\cap X(M)$, we obtain
\begin{equation}\label{eq:4-20-5}
\aligned
&\ \ \ \ n{\rm Area}(B_{r}(0)\cap X(M))+\int_{B_r(0)\cap X(M)}H\lambda d\mu-\int_{B_r(0)\cap X(M)}H^2d\mu\\
 &=\frac{1}{2}\int_{B_r(0)\cap X(M)}\triangle |X|^2d\mu\\
&=\frac{1}{2}\int_{\partial(B_r(0)\cap X(M))}\nabla |X|^2\cdot\frac{\nabla \rho}{|\nabla\rho|}d\sigma\\
&=\int_{\partial(B_r(0)\cap X(M))}|X^{T}|d\sigma\\
&=\int_{\partial(B_r(0)\cap X(M))}\frac{|X|^2-(\lambda-H)^2}{|X^{T}|}d\sigma\\
&=r({\rm Area}(B_{r}(0)\cap X(M)))^{'}-\int_{\partial(B_r(0)\cap X(M))}\frac{(\lambda-H)^2}{|X^{T}|}d\sigma,
\endaligned
\end{equation}
where $\rho(x):=|X(x)|$, $\nabla \rho=\frac{X^T}{|X|}$.
Here we used, from the co-area formula,
\begin{equation}\label{eq:4-20-5-1}
\bigl({\rm Area}(B_{r}(0)\cap X(M))\bigl)^{'}=r\int_{\partial(B_r(0)\cap X(M))}\frac{1}{|X^{T}|}d\sigma.
\end{equation}
Hence, we obtain
\begin{equation}\label{eq:4-20-2}
\aligned
&\ \ \ \ (n+\frac{\lambda^2}{4}){\rm Area}(B_{r}(0)\cap X(M))-r({\rm Area}(B_{r}(0)\cap X(M)))^{'}\\
&=\int_{B_r(0)\cap X(M)}(H-\frac{\lambda}{2})^2d\mu-\int_{\partial(B_r(0)\cap X(M))}\frac{(\lambda-H)^2}{|X^{T}|}d\sigma,
\endaligned
\end{equation}
From \eqref{eq:4-20-5-1},   $(H-\lambda)^2=\langle N,{X}\rangle^2\leq |X|^2=r^2$ on $\partial(B_r(0)\cap X(M))$
and  \eqref{eq:4-20-2},
we conclude
\begin{equation}
\int_{B_{r}(0)\cap X(M)}(H-\frac{\lambda}{2})^2d\mu\leq (n+\frac{\lambda^2}{4}){\rm Area}(B_{r}(0)\cap X(M)).
\end{equation}
Furthermore, we have
\begin{equation}\label{eq:4-20-6}
\aligned
\int_{B_{r}(0)\cap X(M)}(H-\lambda)^2d\mu & \leq \int_{B_{r}(0)\cap X(M)}2\bigl[(H-\frac{\lambda}{2})^2
  +\frac{\lambda^2}{4}\bigl]d\mu\\
&\leq (2n+\lambda^2){\rm Area}(B_{r}(0)\cap X(M)).
\endaligned
\end{equation}
\eqref{eq:4-20-2} implies that
\begin{equation}\label{eq:4-20-3}
\aligned
&\ \ \ \ \bigl(r^{-n-\frac{\lambda^2}{4}}{\rm Area}(B_{r}(0)\cap X(M))\bigl)^{'}\\
&=r^{-n-1-\frac{\lambda^2}{4}}\biggl(r\bigl({\rm Area}(B_{r}(0)\cap X(M))\bigl)^{'}-(n+\frac{\lambda^2}{4})
{\rm Area}(B_{r}(0)\cap X(M))\biggl)\\
&=r^{-n-1-\frac{\lambda^2}{4}}\int_{\partial(B_r(0)\cap X(M))}\frac{(H-\lambda)^2}{|X^{T}|}d\sigma
 -r^{-n-1-\frac{\lambda^2}{4}}\int_{B_r(0)\cap X(M)}(H-\frac{\lambda}{2})^2d\mu.
\endaligned
\end{equation}
Integrating \eqref{eq:4-20-3} from $r_2$ to $r_1$ ($r_1>r_2$), one has
\begin{equation}\label{eq:4-20-4}
\aligned
&\ \ \ \ r_1^{-n-\frac{\lambda^2}{4}}{\rm Area}(B_{r_1}(0)\cap X(M))-
         r_2^{-n-\frac{\lambda^2}{4}}{\rm Area}(B_{r_2}(0)\cap X(M))\\
&=r_1^{-n-2-\frac{\lambda^2}{4}}\int_{B_{r_1}(0)\cap X(M)}(H-\lambda)^2d\mu
  -r_2^{-n-2-\frac{\lambda^2}{4}}\int_{B_{r_2}(0)\cap X(M)}(H-\lambda)^2d\mu\\
&\ \ \ \
  +(n+2+\frac{\lambda^2}{4})\int_{r_2}^{r_1}s^{-n-3-\frac{\lambda^2}{4}}(\int_{B_{s}(0)\cap X(M)}(H-\lambda)^2d\mu)ds\\
&\ \ \ \ -\int_{r_2}^{r_1}s^{-n-1-\frac{\lambda^2}{4}}(\int_{B_{s}(0)\cap X(M)}(H-\frac{\lambda}{2})^2d\mu)ds\\
&\leq \bigl(r_1^{-n-2-\frac{\lambda^2}{4}}
 +r_2^{-n-2-\frac{\lambda^2}{4}}\bigl)\int_{B_{r_1}(0)\cap X(M)}(H-\lambda)^2d\mu.
\endaligned
\end{equation}
Here we used
$$
\biggl(\int_{B_{r}(0)\cap X(M)}(H-\lambda)^2d\mu\biggl)^{'} =r\int_{\partial(B_r(0)\cap X(M))}\frac{(H-\lambda)^2}{|X^{T}|}d\sigma
$$
and ${\rm Area}(B_{r}(0)\cap X(M))$ is non-decreasing in $r$ from  \eqref{eq:4-20-5-1}.
Combining \eqref{eq:4-20-4} with  \eqref{eq:4-20-6}, we have
\begin{equation}\label{eq:4-20-7}
\aligned
&\ \ \ \
\frac{{\rm Area}(B_{r_1}(0)\cap X(M))}{r_1^{n+\frac{\lambda^2}{4}}}
-\frac{{\rm Area}(B_{r_2}(0)\cap X(M))}{r_2^{n+\frac{\lambda^2}{4}}}\\
&\leq (2n+\lambda^2)\bigl(\frac{1}{r_1^{n+2+\frac{\lambda^2}{4}}}
+ \frac{1}{r_2^{n+2+\frac{\lambda^2}{4}}}\bigl){\rm Area}(B_{r_1}(0)\cap X(M)).
\endaligned
\end{equation}
Putting $r_1=t+1$, $r_2=t>0$, we get
\begin{equation}\label{eq:4-20-8}
\aligned
 &\biggl(1-\frac{2(2n+\lambda^2)(t+1)^{n+\frac{\lambda^2}{4}}}
                                     {t^{n+2+\frac{\lambda^2}{4}}}\biggl){\rm Area}(B_{{t+1}}(0)\cap X(M)) \\
                                     & \leq {\rm Area}(B_{{t}}(0)\cap X(M)) (\frac{t+1}{t})^{n+\frac{\lambda^2}{4}}.
                                     \endaligned
\end{equation}
For $t$ sufficiently large, one has, from \eqref{eq:4-20-8},
\begin{equation}\label{eq:4-20-9}
\aligned
&\ \ \ \
{\rm Area}(B_{t+1}(0)\cap X(M))-{\rm Area}(B_{t}(0)\cap X(M))\\
&\leq {\rm Area}(B_{t}(0)\cap X(M))\biggl ((1+\frac{1}{t})^n-1+\frac{C(t+1)^{2n+\lambda^2}{4}}{t^{2n+2+\lambda^2}}\biggl),
\endaligned
\end{equation}
where ${C}$ is constant only depended on $n$, $\lambda$.
Therefore,  there exists some constant $C_1(n,\lambda)$ such that for all $t\geq C_1(n,\lambda)$,
\begin{equation}
\aligned
&{\rm Area}(B_{t+1}(0)\cap X(M))-{\rm Area}(B_{t}(0)\cap X(M))\\
&\leq c(n,\lambda)\frac{{\rm Area}(B_{t}(0)\cap X(M))}{t},
\endaligned
\end{equation}
\begin{equation}
{\rm Area}(B_{t+1}(0)\cap X(M))\leq 2{\rm Area}(B_{t}(0)\cap X(M)),
\end{equation}
where $c(n,\lambda)$ depends only on $n$ and $\lambda$. This completes the proof of the lemma \ref{lemma:4-20-1}.

\end{proof}

\vskip3mm
\noindent
Using Logarithmic Sobolev inequality for hypersurfaces in Euclidean space due to Ecker \cite{[E]} and conformal theory, we can show
\begin{lemma}\label{lemma:4-20-2}
Let $X: M\rightarrow \mathbb{R}^{n+1}$ be an $n$-dimensional hypersurface with measure $d\mu$. Then the following inequality
\begin{equation}\label{eq:4-20-10}
\aligned
&\ \ \ \
\int_M f^2(\ln f^2)e^{-\frac{|X|^2}{2}}d\mu-\int_M f^2e^{-\frac{|X|^2}{2}}d\mu \ln (\int_M f^2e^{-\frac{|X|^2}{2}}2^{\frac{n}{2}}d\mu)\\
&\leq \int_M|\nabla f|^2e^{-\frac{|X|^2}{2}}d\mu+\frac{1}{4}\int_M|H+\langle X,N\rangle|^2f^2e^{-\frac{|X|^2}{2}}d\mu\\
&\ \ \  +C(n)\int_Mf^2e^{-\frac{|X|^2}{2}}d\mu
\endaligned
\end{equation}
holds for any nonnegative function $f$ for which all integrals are well-defined and finite, where $C(n)$ is a positive constant depending on $n$.
\end{lemma}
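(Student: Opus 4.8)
\textit{Proof proposal.} The statement is a weighted logarithmic Sobolev inequality, and the plan is to obtain it from Ecker's logarithmic Sobolev inequality for hypersurfaces in Euclidean space \cite{[E]} by transferring the Gaussian weight $e^{-|X|^2/2}$ into the test function and using the homothety (conformal) invariance of the flat ambient metric to normalize the constants. Since $M$ is here an \emph{arbitrary} hypersurface — not a $\lambda$-hypersurface — only the genuine divergence theorem is available, so I would first prove the inequality for $f$ with compact support and then pass to general admissible $f$ using the stated finiteness of the integrals.

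Concretely, one applies Ecker's inequality on $\Sigma=X(M)\subset\mathbb{R}^{n+1}$ to the function $u=f\,e^{-|X|^2/4}$, for which $u^2=f^2e^{-|X|^2/2}$. The logarithmic term on the left transforms by
\[
\int_M u^2(\ln u^2)\,d\mu=\int_M f^2(\ln f^2)\,e^{-|X|^2/2}d\mu-\tfrac12\int_M |X|^2 f^2\,e^{-|X|^2/2}d\mu ,
\]
and, since $\nabla u=e^{-|X|^2/4}\bigl(\nabla f-\tfrac12 f\,X^T\bigr)$ with $X^T$ the tangential part of the position vector, the Dirichlet term transforms into $\int_M\bigl(|\nabla f|^2-\tfrac12\langle\nabla f^2,X^T\rangle+\tfrac14 f^2|X^T|^2\bigr)e^{-|X|^2/2}d\mu$.

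Next I would integrate the cross term by parts. Using $\operatorname{div}_M\bigl(X^T e^{-|X|^2/2}\bigr)=\bigl(\operatorname{div}_M X^T-|X^T|^2\bigr)e^{-|X|^2/2}$ together with the identity $\operatorname{div}_M X^T=\tfrac12\Delta|X|^2=n+H\langle X,N\rangle$ (the same computation used at the start of Section 9, valid for any hypersurface), the cross term becomes $\tfrac12\int_M f^2\bigl(n+H\langle X,N\rangle-|X^T|^2\bigr)e^{-|X|^2/2}d\mu$. Substituting back, carrying the $-\tfrac12\int_M|X|^2 f^2 e^{-|X|^2/2}d\mu$ over to the right-hand side, and using $|X^T|^2=|X|^2-\langle X,N\rangle^2$, the terms quadratic and linear in $H$ and $\langle X,N\rangle$ — together with the $|X|^2$ and $|X^T|^2$ contributions — reorganize into $\tfrac14|H+\langle X,N\rangle|^2$; the ambient scale is then normalized (this is where ``conformal theory'' is used) so that the coefficient of $|\nabla f|^2$ is exactly $1$ and the residual $|X|^2$-term disappears, and the remaining bounded contributions — including the normalization of the Gaussian, which is the source of the factor $2^{n/2}$ inside the logarithm on the left — are absorbed into $C(n)$.

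The delicate point is exactly this bookkeeping: one must verify that the potentially \emph{unbounded} terms generated by the substitution (the $\tfrac12|X|^2$ from differentiating the logarithm and the $\tfrac14|X^T|^2$ from $|\nabla u|^2$) cancel against $\operatorname{div}_M X^T=n+H\langle X,N\rangle$ and against Ecker's curvature term to leave precisely $\tfrac14|H+\langle X,N\rangle|^2$ plus a term controlled by $C(n)\int_M f^2 e^{-|X|^2/2}d\mu$; matching the sharp constants $1$ and $\tfrac14$ (rather than merely some dimensional multiples) is what forces the correct normalization of Ecker's inequality. If instead one cites Ecker's inequality already in its Gaussian (backward-heat-kernel) form, the same manipulation collapses to simply tracking this normalization, and everything else is routine.
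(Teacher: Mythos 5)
Your proposal is correct and follows exactly the route the paper intends: the paper offers no proof beyond the one-line citation of Ecker's logarithmic Sobolev inequality together with ``conformal theory'' (a homothety of $\mathbb{R}^{n+1}$ to normalize the Gaussian scale), and your substitution $u=f e^{-|X|^2/4}$, the integration by parts via $\operatorname{div}_M X^T=n+H\langle X,N\rangle$, and the recombination of the $|X|^2$, $|X^T|^2$ and $H\langle X,N\rangle$ terms into $\tfrac14|H+\langle X,N\rangle|^2$ is precisely the bookkeeping that derivation requires (and it is consistent with how the paper later passes between Lemma 10.2 and Corollaries 10.1--10.2). The algebra in your reorganization checks out, so the only remaining point is the citation-level one you already flag, namely fixing the normalization of Ecker's inequality so that the coefficients come out as $1$ and $\tfrac14$ with the factor $2^{n/2}$ inside the logarithm.
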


\begin{corollary}\label{corollary:4-27-1}
For  an $n$-dimensional $\lambda$-hypersurface $X: M\rightarrow \mathbb{R}^{n+1}$,  we have  the following inequality
\begin{equation}
\int_M f^2(\ln f)e^{-\frac{|X|^2}{2}}d\mu\leq \frac{1}{2}\int_M|\nabla f|^2e^{-\frac{|X|^2}{2}}d\mu
+(\frac{1}{2}C(n)+\frac{1}{8}\lambda^2)2^{-\frac{n}{2}}
\end{equation}
 for any nonnegative function $f$ which satisfies
\begin{equation}
\int_M f^2e^{-\frac{|X|^2}{2}}2^{\frac{n}{2}}d\mu=1.
\end{equation}
\end{corollary}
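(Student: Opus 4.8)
The plan is to deduce the corollary directly from Lemma \ref{lemma:4-20-2} by specializing the Logarithmic Sobolev inequality \eqref{eq:4-20-10} to a $\lambda$-hypersurface and then imposing the stated normalization on $f$.

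First I would use the defining equation $H+\langle X,N\rangle=\lambda$ of a $\lambda$-hypersurface to rewrite the term $\frac14\int_M|H+\langle X,N\rangle|^2f^2e^{-\frac{|X|^2}{2}}d\mu$ on the right-hand side of \eqref{eq:4-20-10}: the integrand is now the constant $\lambda^2$, so this term equals $\frac{\lambda^2}{4}\int_M f^2e^{-\frac{|X|^2}{2}}d\mu$.

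Next, under the hypothesis $\int_M f^2e^{-\frac{|X|^2}{2}}2^{\frac{n}{2}}d\mu=1$, two simplifications occur: the term $\int_M f^2e^{-\frac{|X|^2}{2}}d\mu\,\ln\!\big(\int_M f^2e^{-\frac{|X|^2}{2}}2^{\frac{n}{2}}d\mu\big)$ on the left-hand side of \eqref{eq:4-20-10} vanishes, and every remaining occurrence of $\int_M f^2e^{-\frac{|X|^2}{2}}d\mu$ equals $2^{-\frac{n}{2}}$. Using also $\ln f^2=2\ln f$ (with the convention $0\ln 0=0$ on the set where $f$ vanishes), the left-hand side of \eqref{eq:4-20-10} becomes $2\int_M f^2(\ln f)e^{-\frac{|X|^2}{2}}d\mu$. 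Collecting all terms, Lemma \ref{lemma:4-20-2} reduces to
\[
2\int_M f^2(\ln f)e^{-\frac{|X|^2}{2}}d\mu\leq\int_M|\nabla f|^2e^{-\frac{|X|^2}{2}}d\mu+\Big(C(n)+\frac{\lambda^2}{4}\Big)2^{-\frac{n}{2}}.
\]
Dividing both sides by $2$ yields exactly the claimed inequality, with constant $\frac12 C(n)+\frac18\lambda^2$ in front of $2^{-\frac{n}{2}}$.

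Since the argument is a direct substitution, there is no genuine analytic difficulty; the only points deserving a little care are verifying that the finiteness hypotheses of Lemma \ref{lemma:4-20-2} are met under the present normalization, and justifying the identity $\ln f^2=2\ln f$ almost everywhere via the convention $0\ln 0=0$ on $\{f=0\}$.
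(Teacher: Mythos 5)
Your proposal is correct and is exactly the derivation the paper intends: the corollary is stated without proof precisely because it follows from Lemma \ref{lemma:4-20-2} by substituting $H+\langle X,N\rangle=\lambda$, using the normalization $\int_M f^2e^{-\frac{|X|^2}{2}}2^{\frac{n}{2}}d\mu=1$ to kill the logarithm term and evaluate $\int_M f^2e^{-\frac{|X|^2}{2}}d\mu=2^{-\frac{n}{2}}$, and dividing by $2$ after writing $\ln f^2=2\ln f$. Nothing further is needed.
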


\begin{corollary}\label{corollary:4-27-2}
If  $X: M\rightarrow \mathbb{R}^{n+1}$ is an $n$-dimensional $\lambda$-hypersurface,  then the following inequality
\begin{equation}
\aligned
&\ \ \ \
\int_M u^2(\ln u^2)d\mu-\int_M u^2d\mu \ln (\int_M u^2d\mu)\\
&\leq 2\int_M|\nabla u|^2d\mu+(\frac{1}{4}\lambda^2+\frac{n}{2}\ln 2+C(n))\int_Mu^2d\mu
\endaligned
\end{equation}
holds for any nonnegative function $f$ which satisfies
\begin{equation}
f=ue^{\frac{|X|^2}{4}}.
\end{equation}
\end{corollary}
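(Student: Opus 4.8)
The plan is to obtain Corollary~\ref{corollary:4-27-2} from the weighted logarithmic Sobolev inequality \eqref{eq:4-20-10} of Lemma~\ref{lemma:4-20-2} by the substitution $f=ue^{|X|^2/4}$, equivalently $u=fe^{-|X|^2/4}$, which absorbs the Gaussian weight $e^{-|X|^2/2}$ into $u$. First I would rewrite the right-hand side of \eqref{eq:4-20-10} using that $X\colon M\to\mathbb{R}^{n+1}$ is a $\lambda$-hypersurface, i.e. $H+\langle X,N\rangle=\lambda$, so that $\frac{1}{4}\int_M|H+\langle X,N\rangle|^2f^2e^{-|X|^2/2}d\mu=\frac{\lambda^2}{4}\int_M f^2e^{-|X|^2/2}d\mu$, and rewrite the left-hand side by splitting the logarithm as $\ln\bigl(2^{n/2}\int_M f^2e^{-|X|^2/2}d\mu\bigr)=\frac{n}{2}\ln 2+\ln\bigl(\int_M f^2e^{-|X|^2/2}d\mu\bigr)$. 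All integrals below are finite since, by hypothesis, $f=ue^{|X|^2/4}$ is admissible in Lemma~\ref{lemma:4-20-2}.

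Next I would translate each term into $u$. From $f^2e^{-|X|^2/2}=u^2$ one gets $\int_M f^2e^{-|X|^2/2}d\mu=\int_M u^2d\mu$ and, since $\ln f^2=\ln u^2+\frac{|X|^2}{2}$, also $\int_M f^2(\ln f^2)e^{-|X|^2/2}d\mu=\int_M u^2(\ln u^2)d\mu+\frac{1}{2}\int_M u^2|X|^2d\mu$. For the Dirichlet term, using $\nabla|X|^2=2X^T$ one has $\nabla f=e^{|X|^2/4}\bigl(\nabla u+\frac{u}{2}X^T\bigr)$, hence $\int_M|\nabla f|^2e^{-|X|^2/2}d\mu=\int_M\bigl(|\nabla u|^2+u\langle\nabla u,X^T\rangle+\frac{1}{4}u^2|X^T|^2\bigr)d\mu$. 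With these substitutions \eqref{eq:4-20-10} becomes
\begin{equation*}
\aligned
&\int_M u^2(\ln u^2)d\mu+\frac{1}{2}\int_M u^2|X|^2d\mu-\int_M u^2d\mu\,\ln\Bigl(\int_M u^2d\mu\Bigr)\\
&\leq\int_M\Bigl(|\nabla u|^2+u\langle\nabla u,X^T\rangle+\frac{1}{4}u^2|X^T|^2\Bigr)d\mu+\Bigl(\frac{1}{4}\lambda^2+\frac{n}{2}\ln 2+C(n)\Bigr)\int_M u^2d\mu.
\endaligned
\end{equation*}

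Finally I would estimate the cross term by Young's inequality with the constant $\frac{1}{2}$, namely $u\langle\nabla u,X^T\rangle\leq|\nabla u|^2+\frac{1}{4}u^2|X^T|^2$ pointwise, so that the Dirichlet block is bounded by $2\int_M|\nabla u|^2d\mu+\frac{1}{2}\int_M u^2|X^T|^2d\mu$; then, since $|X^T|^2=|X|^2-\langle X,N\rangle^2=|X|^2-(\lambda-H)^2\leq|X|^2$, the term $\frac{1}{2}\int_M u^2|X^T|^2d\mu$ is at most $\frac{1}{2}\int_M u^2|X|^2d\mu$, which cancels against the identical term on the left-hand side (discarding the extra non-positive quantity $-\frac{1}{2}\int_M u^2(\lambda-H)^2d\mu$), and what remains is exactly the asserted inequality. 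The only subtle point is this choice of Young constant: $\frac{1}{2}$ is the unique value for which the coefficient of $\int_M|\nabla u|^2d\mu$ is at most $2$ while the coefficient of $\int_M u^2|X^T|^2d\mu$ stays at most $\frac{1}{2}$, which is precisely what makes the two $|X|^2$-terms cancel; the rest is bookkeeping. Note also that working with the pointwise Young inequality (rather than integrating the cross term by parts) avoids any decay-at-infinity issue on the non-compact $M$.
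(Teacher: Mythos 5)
Your derivation is correct and is exactly the intended route: the paper states this corollary without proof, and the standard argument is precisely your substitution $f=ue^{|X|^2/4}$ into Lemma \ref{lemma:4-20-2}, using $H+\langle X,N\rangle=\lambda$, the expansion $|\nabla f|^2e^{-|X|^2/2}=|\nabla u+\tfrac{u}{2}X^T|^2\leq 2|\nabla u|^2+\tfrac12 u^2|X^T|^2$, and $|X^T|^2\leq|X|^2$ so that the $\tfrac12\int_M u^2|X|^2d\mu$ terms cancel. The only point worth recording is that this cancellation uses the finiteness of $\int_M u^2|X|^2d\mu$, which is covered by the admissibility hypothesis of Lemma \ref{lemma:4-20-2} and holds automatically in the paper's application, where $u$ is a compactly supported cut-off function.
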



\begin{lemma}$($\cite{[LW2]}$)$\label{lemma:4-27-1}
Let $X: M\rightarrow \mathbb{R}^{n+1}$ be a complete properly immersed hypersurface.
For any $x_0\in M$, $r\leq 1$, if $|H|\leq\frac{C}{r}$ in $B_r(X(x_0))\cap X(M)$ for some constant $C>0$. Then
\begin{equation}
\text{Area}(B_{r}(X(x_0))\cap X(M))\geq \kappa r^n,
\end{equation}
where $\kappa=\omega_n e^{-C}$.
\end{lemma}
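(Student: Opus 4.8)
The plan is to derive the estimate from the classical monotonicity formula for the area ratio of a properly immersed submanifold with bounded mean curvature, essentially as in Li and Wei \cite{[LW2]}. First I would fix notation: write $p:=X(x_0)$, set $V:=X-p$ regarded as a vector field along $M$, put $\rho:=|V|$ and, for $0<\sigma\le r$, $A(\sigma):={\rm Area}(B_\sigma(p)\cap X(M))$. Since $X$ is proper, $X^{-1}(\overline{B_r(p)})$ is compact, so every integral below is finite and the first variation formula applies to compactly supported vector fields. Using ${\rm div}_M V=n$, $\nabla\rho=V^{T}/\rho$ and $\langle \mathbf{H},V\rangle=\langle \mathbf{H},V^{\perp}\rangle$, I would apply the first variation formula to the radial field $\phi(\rho)V$ with a cut-off $\phi$ and then pass to $\phi=\mathbf{1}_{[0,\sigma]}$; after the standard rearrangement this gives the monotonicity identity
\begin{equation*}
\frac{d}{d\sigma}\Bigl(\frac{A(\sigma)}{\sigma^{n}}\Bigr)
=\frac{d}{d\sigma}\int_{B_\sigma(p)\cap X(M)}\frac{|V^{\perp}|^{2}}{\rho^{n+2}}\,d\mu
+\frac{1}{\sigma^{n+1}}\int_{B_\sigma(p)\cap X(M)}\langle \mathbf{H},V\rangle\,d\mu .
\end{equation*}

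On $B_\sigma(p)$ one has $\rho\le\sigma$ and, since $M$ is a hypersurface, $|\langle \mathbf{H},V\rangle|\le|H|\,\rho\le\Lambda\sigma$ with $\Lambda:=\sup_{B_r(p)\cap X(M)}|H|$; moreover the first term on the right is the derivative of a non-decreasing function of $\sigma$, being an integral of a non-negative density over a growing region. Hence the identity yields $\frac{d}{d\sigma}(\sigma^{-n}A(\sigma))\ge-\Lambda\,\sigma^{-n}A(\sigma)$, equivalently $\frac{d}{d\sigma}(e^{\Lambda\sigma}\sigma^{-n}A(\sigma))\ge0$, so $\sigma\mapsto e^{\Lambda\sigma}\sigma^{-n}A(\sigma)$ is non-decreasing on $(0,r]$.

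Next I would let $\sigma\to0^{+}$: since $X$ is an immersion, near $x_0$ the image is a smooth embedded hypersurface patch through $p$, which has Euclidean density $1$ at $p$, so $\liminf_{\sigma\to0^{+}}\sigma^{-n}A(\sigma)\ge\omega_n$ and therefore $\lim_{\sigma\to0^{+}}e^{\Lambda\sigma}\sigma^{-n}A(\sigma)\ge\omega_n$. Evaluating the monotone quantity at $\sigma=r$ then gives $e^{\Lambda r}r^{-n}A(r)\ge\omega_n$, i.e. $A(r)\ge\omega_n e^{-\Lambda r}r^{n}$. Finally the hypothesis $|H|\le C/r$ on $B_r(p)\cap X(M)$ gives $\Lambda r\le C$, hence $A(r)\ge\omega_n e^{-C}r^{n}=\kappa r^{n}$, which is the claim. (The hypothesis $r\le1$ plays no role in this argument; it only records the normalisation used when the lemma is applied.)

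The one point that needs genuine care is the derivation of the monotonicity identity itself — the bookkeeping with the cut-off $\phi$ and the passage to the sharp indicator, which is exactly Simon's monotonicity formula with a mean-curvature error term; if one prefers, one simply quotes it, or quotes \cite{[LW2]} directly. I would emphasise that the $\lambda$-hypersurface equation is not used at all here: only properness of the immersion and the pointwise bound on $|H|$ enter.
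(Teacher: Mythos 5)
Your proof is correct: the paper itself gives no argument for this lemma (it is quoted from Li--Wei \cite{[LW2]}), and your route---the first variation identity applied to the radial field, yielding monotonicity of $e^{\Lambda\sigma}\sigma^{-n}\,\mathrm{Area}(B_\sigma(p)\cap X(M))$ with $\Lambda=\sup|H|\le C/r$, combined with the density lower bound $\liminf_{\sigma\to 0^+}\sigma^{-n}\mathrm{Area}(B_\sigma(p)\cap X(M))\ge\omega_n$ at the point $p=X(x_0)$---is exactly the standard monotonicity-formula proof used in that reference. The only care needed is the one you flag (passing from the cut-off to the sharp indicator, i.e.\ Simon's monotonicity formula with mean-curvature error term), and your observation that properness only enters to guarantee finiteness of the areas and that $r\le 1$ is not used are both accurate.
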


\begin{lemma}\label{lemma:4-21-1}
If $X: M\rightarrow \mathbb{R}^{n+1}$ is an $n$-dimensional complete and non-compact proper $\lambda$-hypersurface. then it has infinite area.
\end{lemma}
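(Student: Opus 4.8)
The plan is to argue by contradiction. Suppose $X:M\to\mathbb R^{n+1}$ is a complete, non-compact, properly immersed $\lambda$-hypersurface with ${\rm Area}(M)=:V<\infty$. Since $X$ is proper, each $B_r(0)\cap X(M)$ is compact and ${\rm Area}(B_r(0)\cap X(M))$ increases to $V$ as $r\to\infty$; and since $M$ is connected and non-compact, $|X|$ takes every value in an unbounded interval, so the unit shells $\Sigma_k:=\{k\le|X|\le k+1\}\cap X(M)$ are non-empty for every large integer $k$, and their areas $a_k:={\rm Area}(\Sigma_k)$ satisfy $\sum_k a_k\le V<\infty$, in particular $a_k\to 0$. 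Moreover, letting $r\to\infty$ in the inequality $\int_{B_r(0)\cap X(M)}(H-\lambda)^2\,d\mu\le(2n+\lambda^2){\rm Area}(B_r(0)\cap X(M))$ obtained in the proof of Lemma~\ref{lemma:4-20-1} gives $\int_M(H-\lambda)^2\,d\mu=\int_M\langle X,N\rangle^2\,d\mu\le(2n+\lambda^2)V<\infty$, so $|H|$ can exceed any fixed constant only on a set of small total area.

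Next I would harvest area from the non-collapsing Lemma~\ref{lemma:4-27-1}. For $x_0\in M$ with $|X(x_0)|=R$, set $r=(|\lambda|+R+2)^{-1}\le 1$; then on $B_r(X(x_0))$ one has $|X|\le R+1$, hence $|H|=|\lambda-\langle X,N\rangle|\le|\lambda|+R+1=r^{-1}$, and Lemma~\ref{lemma:4-27-1} with $C=1$ gives ${\rm Area}(B_r(X(x_0))\cap X(M))\ge\omega_n e^{-1}r^n$. Taking such an $x_0$ in $\Sigma_k$ for each large $k$ and noting the ball lies in $\{k-1\le|X|\le k+2\}$, we obtain $a_{k-1}+a_k+a_{k+1}\ge\omega_n e^{-1}(|\lambda|+k+2)^{-n}$, hence $3V\ge\omega_n e^{-1}\sum_k(|\lambda|+k+2)^{-n}$. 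For $n=1$ the series diverges, already a contradiction.

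For $n\ge 2$ one non-collapsed ball per shell does not suffice, and the core of the argument is to pack $\gtrsim R^{\,n-1}$ pairwise disjoint non-collapsed balls into each shell $\Sigma_R$, so that $a_R\ge c_0>0$ for all large $R$, whence $\sum_k a_k=+\infty$, contradicting $\sum_k a_k\le V$. Here the $\lambda$-hypersurface equation $H=\lambda-\langle X,N\rangle$ is used through a dichotomy on $\Sigma_R$: where $|H|$ exceeds a fixed large constant $C_0=C_0(n,\lambda)$, the support function $|\langle X,N\rangle|=|\lambda-H|$ is large, so $M$ is almost tangent to the centred spheres there, and a co-area estimate through the level sets of $|X|$ forces that part of $\Sigma_R$, if non-trivial, to carry a sphere-like amount $\gtrsim R^{\,n-1}$ of area; where $|H|\le C_0$, the non-collapsing radius in Lemma~\ref{lemma:4-27-1} is bounded below by a fixed $\delta_0(n,\lambda)>0$, and a Vitali selection then packs $\gtrsim R^{\,n-1}$ disjoint $\delta_0$-balls, the overlap being controlled by the increment estimate ${\rm Area}(B_{t+1}(0)\cap X(M))-{\rm Area}(B_t(0)\cap X(M))\le c\,{\rm Area}(B_t(0)\cap X(M))/t$ of Lemma~\ref{lemma:4-20-1} together with the finiteness of $\int_M\langle X,N\rangle^2\,d\mu$. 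The logarithmic Sobolev inequalities of Corollaries~\ref{corollary:4-27-1} and~\ref{corollary:4-27-2} serve to discard the remaining degenerate possibility, in which the area of $M$ stays concentrated near the origin. I expect the hard part to be exactly this dichotomy, i.e.\ converting the integral control of $\langle X,N\rangle$ into a genuine $R$-uniform lower bound for ${\rm Area}(\Sigma_R)$ — showing that a connected, proper, non-compact $\lambda$-hypersurface cannot become area-thin at infinity.
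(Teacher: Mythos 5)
Your opening reductions and the $n=1$ case are fine: properness gives exhaustion by compact pieces, \eqref{eq:4-20-6} gives $\int_M(\lambda-H)^2e^{0}\,d\mu<\infty$ under the finite-area hypothesis, and one non-collapsed ball of radius $\sim(|\lambda|+k)^{-1}$ per unit shell, via Lemma \ref{lemma:4-27-1}, makes $\sum_k a_k$ diverge when $n=1$. But for $n\ge 2$ the proposal has a genuine gap exactly at the step you yourself flag as the hard part, and the mechanism you propose there cannot work. The claim that one can pack $\gtrsim R^{\,n-1}$ pairwise disjoint non-collapsed balls of a \emph{fixed} radius $\delta_0(n,\lambda)$ into each unit shell $\Sigma_R$ is false for genuine $\lambda$-hypersurfaces: for the cylinder $S^{n-1}(r_0)\times\mathbb{R}$ the shell $\{R\le|X|\le R+1\}$ has area bounded independently of $R$, so only $O(1)$ such balls fit, and since $H$ is constant there the ``high-curvature'' half of your dichotomy is empty, so the co-area mechanism cannot supply $\gtrsim R^{\,n-1}$ area either. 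What your contradiction actually requires is only the uniform bound $a_R\ge c_0>0$, i.e.\ at least linear area growth of the shells --- but that is essentially Theorem \ref{theorem:4-21-1}, whose proof in the paper relies on the present lemma, so invoking it here would be circular. None of the dichotomy/co-area/Vitali steps is carried out, and assigning to Corollaries \ref{corollary:4-27-1} and \ref{corollary:4-27-2} the role of ``discarding the degenerate possibility'' is a placeholder, not an argument.

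For comparison, in the paper the logarithmic Sobolev inequality is the engine of the proof, not a final clean-up. One works with dyadic annuli $\Omega(k,k+1)=\{2^{k-1/2}\le|X|\le 2^{k+1/2}\}$: along a divergent path crossing such an annulus one places $2^{2k-1}$ disjoint extrinsic balls of the \emph{small} radius $r=2^{-k}$, on which $|H|\le|X|+|\lambda|\le(\sqrt2+|\lambda|)r^{-1}$, so Lemma \ref{lemma:4-27-1} gives $A(k,k+1)\ge\kappa_1 2^{2k-1-kn}$. If the total area were finite, iterating the alternative in the Claim against this lower bound produces annuli with $A(k_1,k_2)\le\varepsilon$ and $A(k_1,k_2)\le 2^{4n}A(k_1+2,k_2-2)$. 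Testing Corollary \ref{corollary:4-27-1} with $f=e^{L+|X|^2/4}\psi(\rho)$, where $\psi$ is a cut-off on $\Omega(k_1,k_2)$ and $L$ is fixed by the normalization, the Gaussian weight cancels, the error term is controlled by the comparability of the two annuli, and one obtains an upper bound on $L$ independent of $\varepsilon$, while smallness of the area forces $L$ to be arbitrarily large --- a contradiction. To salvage your outline for $n\ge2$ you would have to either reproduce this entropy argument or give an independent proof that every far unit shell of a proper, non-compact, connected $\lambda$-hypersurface carries a definite amount of area; as written, that is precisely what is missing.
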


\begin{proof}
Let
\begin{equation*}
\Omega(k_1,k_2)=\{x\in M: 2^{k_1-\frac{1}{2}}\leq \rho(x)\leq 2^{k_2-\frac{1}{2}}\},
\end{equation*}
\begin{equation*}
A(k_1,k_2)=\text{Area}(X(\Omega(k_1,k_2))),
\end{equation*}
where $\rho(x)=|X(x)|$. Since $X: M\rightarrow \mathbb{R}^{n+1}$ is a complete and non-compact proper immersion, $X(M)$ can not
be contained in a compact Euclidean ball. Then, for $k$ large enough, $\Omega(k, k+1)$ contains at least $2^{2k-1}$ disjoint balls
\begin{equation*}
B_r(x_i)=\{ x\in M: \rho_{x_i}(x)<2^{-\frac{1}{2}}r\}, \ \ x_i\in M,\ r=2^{-k}
\end{equation*}
where $\rho_{x_i}(x)=|X(x)-X(x_i)|$. Since, in $\Omega(k,k+1)$,
\begin{equation}
|H|\leq |H-\lambda|+|\lambda|=|\langle X,N\rangle|+|\lambda|\leq |X|+|\lambda|\leq 2^{k}\sqrt2+|\lambda|
\leq \frac{\sqrt2+|\lambda|}{r},\end{equation}
by using of the lemma \ref{lemma:4-27-1}, we get
\begin{equation}\label{eq:4-27-2}
A(k,k+1)\geq \kappa_1 2^{2k-1-kn},
\end{equation}
with $\kappa_1=\omega_n e^{-(\sqrt{2}+|\lambda|)2^{-\frac{1}{2}}}2^{-\frac{n}{2}}$.

{\bf Claim}: If ${\text Area}(X(M))<\infty$, then, for every $\varepsilon>0$, there exists a large constant $k_0>0$ such that,
\begin{equation}\label{eq:4-27-1}
A(k_1,k_2)\leq \varepsilon \ \ \ {\rm and}\ \ \ A(k_1,k_2)\leq 2^{4n}A(k_1+2,k_2-2), \ \ \ {\rm if }\ \ k_2>k_1>k_0.
\end{equation}

\noindent
In fact, we may choose $K>0$ sufficiently large such that  $k_1\approx \frac{K}{2}$, $k_2\approx\frac{3K}{2}$.
Assume  \eqref{eq:4-27-1} does not hold, that is,
\begin{equation*}
A(k_1,k_2)\geq 2^{4n}A(k_1+2,k_2-2).
\end{equation*}
If
\begin{equation*}
A(k_1+2,k_2-2)\leq 2^{4n}A(k_1+4,k_2-4),
\end{equation*}
then we complete the proof of the claim. Otherwise, we can repeat the procedure for  $j$ times, we have
\begin{equation*}
A(k_1,k_2)\geq 2^{4nj}A(k_1+2j,k_2-2j).
\end{equation*}
When $j\approx\frac{K}{4}$, we have from \eqref{eq:4-27-2}
\begin{equation*}
\text{Area}(X(M))\geq A(k_1,k_2)\geq 2^{nK}A(K,K+1)\geq \kappa_1 2^{2K-1}.
\end{equation*}
Thus, \eqref{eq:4-27-1}  must hold for some $k_2>k_1$
because $\text{Area}(M)<\infty$.
Hence for any $\varepsilon>0$, we can choose $k_1$ and $k_2\approx 3k_1$ such that \eqref{eq:4-27-1} holds.

\noindent
We  define a smooth cut-off function $\psi(t)$ by
\begin{equation}
 \psi(t)= {\begin{cases}
      \ 1, & \ \ 2^{k_1+\frac{3}{2}}\leq t\leq 2^{k_2-\frac{5}{2}},\\
      \ 0,& \ \ {\rm outside}\ [2^{k_1-\frac{1}{2}}, 2^{k_2-\frac{1}{2}}].\\
     \end{cases}}
     \ \ \ \ \ 0\leq \psi(t)\leq 1, \ \ \ \ |\psi^{'}(t)|\leq 1.
  \end{equation}
Letting
\begin{equation}
f(x)=e^{L+\frac{|X|^2}{4}}\psi(\rho(x)),
\end{equation}
we choose $L$ satisfying
\begin{equation}
1=\int_M f^2e^{-\frac{|X|^2}{2}}2^{\frac{n}{2}}d\mu=e^{2L}\int_{\Omega(k_1,k_2)}\psi^2(\rho(x))2^{\frac{n}{2}}d\mu.
\end{equation}
We obtain from the corollary \ref{corollary:4-27-1} and $t\ln t\geq-\frac{1}{e}$ for $0\leq t\leq 1$
\begin{equation}
\aligned
(\frac{1}{2}C(n)+\frac{1}{8}\lambda^2)2^{-\frac{n}{2}}&\geq\int_{\Omega(k_1,k_2)}e^{2L}\psi^2(L+\frac{|X|^2}{4}+\ln \psi)d\mu\\
              &\ \ \ -\frac{1}{2}\int_{\Omega(k_1,k_2)}e^{2L}|\psi^{'}\nabla \rho+\psi\frac{X^T}{2}|^2d\mu\\
              &\geq\int_{\Omega(k_1,k_2)}e^{2L}\psi^2(L+\frac{|X|^2}{4}+\ln \psi)d\mu\\
              &\ \ -\int_{\Omega(k_1,k_2)}e^{2L}|\psi^{'}|^2d\mu-\frac{1}{4}\int_{\Omega(k_1,k_2)}e^{2L}\psi^2|X|^2d\mu\\
              &=2^{-\frac{n}{2}}L+\int_{\Omega(k_1,k_2)}e^{2L}\psi^2\ln \psi d\mu-\int_{\Omega(k_1,k_2)}e^{2L}|\psi^{'}|^2 d\mu\\
              &\geq 2^{-\frac{n}{2}} L-(\frac{1}{2e}+1)e^{2L}A(k_1,k_2).
\endaligned
\end{equation}
Therefore,  it follows from \eqref{eq:4-27-1} that
\begin{equation}\label{eq:4-27-3}
\aligned
(\frac{1}{2}C(n)+\frac{1}{8}\lambda^2)2^{-\frac{n}{2}}&\geq 2^{-\frac{n}{2}} L-(\frac{1}{2e}+1)e^{2L}2^{4n}A(k_1+2,k_2-2)\\
              & \geq 2^{-\frac{n}{2}} L-(\frac{1}{2e}+1)e^{2L}2^{4n}\int_{\Omega(k_1,k_2)}\psi^2(\rho(x))d\mu\\
              &=2^{-\frac{n}{2}} L-(\frac{1}{2e}+1)2^{4n}2^{-\frac{n}{2}}.
\endaligned
\end{equation}
On the other hand, we have, from \eqref{eq:4-27-1} and definition of $f(x)$,
\begin{equation}
1\leq e^{2L}\varepsilon 2^{\frac{n}{2}}.
\end{equation}
Letting $\varepsilon>0$ sufficiently small, then $L$ can be arbitrary large, which  contradicts
 \eqref{eq:4-27-3}. Hence,  $M$ has infinite area.
\end{proof}

\begin{theorem}\label{theorem:4-21-1}
Let $X: M\rightarrow \mathbb{R}^{n+1}$ be an $n$-dimensional complete proper $\lambda$-hypersurface.
Then, for any $p\in M$, there exists a constant $C>0$ such that
$$
\text{Area}(B_{r}(X(x_0))\cap X(M))\geq Cr,
$$
for all $r>1$.
\end{theorem}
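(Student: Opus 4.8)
The plan is to prove the statement by contradiction, after reducing it to balls centred at the origin, and to combine the three ingredients already prepared in this section: the almost-monotonicity and doubling estimates of Lemma \ref{lemma:4-20-1}, the local lower area bound of Lemma \ref{lemma:4-27-1}, and the scale-invariant conformal logarithmic Sobolev inequality of Corollary \ref{corollary:4-27-2} (whose constant depends only on $n$ and $\lambda$ precisely because on a $\lambda$-hypersurface $H+\langle X,N\rangle\equiv\lambda$), together with the infinite-area conclusion of Lemma \ref{lemma:4-21-1}.

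First I would dispose of a bounded range of radii. For $1<r\le R_0$, applying Lemma \ref{lemma:4-27-1} at the centre $X(x_0)$ with radius $1$ (on $B_1(X(x_0))\cap X(M)$ the mean curvature satisfies $|H|\le|X|+|\lambda|\le |X(x_0)|+1+|\lambda|$) gives $\mathrm{Area}(B_r(X(x_0))\cap X(M))\ge\mathrm{Area}(B_1(X(x_0))\cap X(M))\ge c_0>0$, whence the claim on $[1,R_0]$ with $C=c_0/R_0$. For $r>R_0$, writing $R_1=|X(x_0)|$ and using $B_{r-R_1}(0)\subset B_r(X(x_0))$, it suffices to produce a constant $C>0$ with $V(\rho):=\mathrm{Area}(B_\rho(0)\cap X(M))\ge C\rho$ for all large $\rho$. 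Suppose this fails, so that $\liminf_{\rho\to\infty}V(\rho)/\rho=0$, and choose $R$ as large as we wish with $V(R)\le\varepsilon R$, with $\varepsilon$ as small as we please.

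Following the test-function scheme of the proof of Lemma \ref{lemma:4-21-1}, but now carried out quantitatively, I would take on an annular region with outer radius comparable to $R$ a radial cut-off $f=e^{L+|X|^2/4}\psi(\rho)$, normalised by $\int_M f^2e^{-|X|^2/2}2^{n/2}d\mu=1$. The doubling inequalities of Lemma \ref{lemma:4-20-1} let one compare the weight $\int\psi^2 d\mu$ on this region with the dyadic shell areas $V(2^{k+1})-V(2^k)$ up to scale $R$, and force $L$ to be large once $V(R)$ is small; on the other hand Corollary \ref{corollary:4-27-1} (equivalently Corollary \ref{corollary:4-27-2}) bounds $L$ from above by a constant depending only on $n$ and $\lambda$, once the $|X|^2$-terms on the entropy side cancel those produced by $|\nabla f|^2$. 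To convert this into a lower bound of the correct, that is, linear, order, I would feed in the local lower area bound of Lemma \ref{lemma:4-27-1} at scale $\sim 1/R$ near the sphere $\{|X|=R\}$, where the $\lambda$-hypersurface equation makes $|H|\le|X|+|\lambda|$ of exactly the size $C/(\text{scale})$ required by that lemma, and sum the contributions over the dyadic shells to obtain $V(R)\ge CR$, contradicting $V(R)\le\varepsilon R$.

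The hard part will be precisely this last bookkeeping: organising the test-function estimate so that its output is a genuine linear lower bound, rather than merely the infinitude of the area (Lemma \ref{lemma:4-21-1}) or a bound of the shape $\log R$ or $O(1)$. What makes it work is that the logarithmic Sobolev constant in Corollary \ref{corollary:4-27-2}, the doubling ratio in Lemma \ref{lemma:4-20-1}, and the local mean-curvature control feeding Lemma \ref{lemma:4-27-1} are all uniform in the scale, and this uniformity is a direct consequence of the identity $H+\langle X,N\rangle\equiv\lambda$ defining a $\lambda$-hypersurface; apart from these adjustments of constants the argument is the same as for self-shrinkers in \cite{[LW2]}.
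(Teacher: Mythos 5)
There is a genuine gap, and it sits exactly where you flag it: the ``last bookkeeping'' is not a technicality but the entire content of the theorem, and the mechanism you propose for it does not work. Feeding Lemma \ref{lemma:4-27-1} in at scale $\sim 1/R$ near the sphere $\{|X|=R\}$ gives at most $\kappa R^{-n}$ of area per ball, and the only balls you can guarantee are disjoint ones produced by properness/connectedness as in the proof of Lemma \ref{lemma:4-21-1}, of which there are only on the order of $R^{2}$ in a shell; summing over dyadic shells therefore yields a bound of size $R^{2-n}$, which is far below linear for $n\geq 2$. Likewise, the weighted test function $f=e^{L+|X|^2/4}\psi(\rho)$ with the normalization constant $L$, played against Corollary \ref{corollary:4-27-1}, is precisely the argument of Lemma \ref{lemma:4-21-1} and can only output infinitude of the area, as you yourself note. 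So neither of the two routes you sketch produces the linear lower bound, and no argument is given that does.

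The paper's proof assembles the same ingredients in a different way, and this is the missing idea: one never proves linear growth ``directly'' by summing local contributions. Instead, assume ${\rm Area}(B_r(0)\cap X(M))\leq\varepsilon r$ for some large $r$ and run an induction over integer radii $t\geq r$. Under the inductive hypothesis ${\rm Area}(B_t(0)\cap X(M))\leq 2\varepsilon t$, Lemma \ref{lemma:4-20-1} forces the triple-annulus area ${\rm Area}(B_{t+2}(0)\cap X(M))-{\rm Area}(B_{t-1}(0)\cap X(M))$ to be at most $2C_2(n,\lambda)\varepsilon$; then the \emph{unweighted} logarithmic Sobolev inequality of Corollary \ref{corollary:4-27-2}, applied to a unit-width annular cutoff $u$ (not to the exponentially weighted $f$), turns the smallness of this mass into the estimate that each unit increment ${\rm Area}(B_{t+1}(0)\cap X(M))-{\rm Area}(B_{t}(0)\cap X(M))$ is at most $C_0/\ln\bigl((2^{\frac n2+1}C_2\varepsilon)^{-1}\bigr)$ times the triple-annulus area. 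Summing telescopically from $t=r$ to $t=k$ and choosing $\varepsilon$ small gives ${\rm Area}(B_{k+1}(0)\cap X(M))\leq 2\,{\rm Area}(B_{r}(0)\cap X(M))\leq 2\varepsilon r$ for every $k$, i.e.\ a \emph{uniform} bound on the total area, which contradicts the infinite-area Lemma \ref{lemma:4-21-1}. In other words, the failure of linear growth at one radius is propagated outward to a global finite-area statement, and the contradiction is with Lemma \ref{lemma:4-21-1}; Lemma \ref{lemma:4-27-1} enters only inside the proof of that lemma, not at scale $1/R$ in the main argument. Your reduction to origin-centered balls and your treatment of the bounded range of radii are fine, but without this iteration scheme the proposal does not prove the theorem.
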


\begin{proof}
We can choose $r_0>0$ such that ${\rm Area}(B_{r}(0)\cap X(M))>0$ for $r\geq r_0$. It is sufficient to prove there exists a constant $C>0$ such that
\begin{equation}\label{eq:4-21-1}
{\rm Area}(B_{r}(0)\cap X(M))\geq Cr
\end{equation}
holds for all $r\geq r_0$. In fact, if \eqref{eq:4-21-1} holds, then for any $x_0\in M$ and $r>|X(x_0)|$,
\begin{equation}
B_r(X(x_0))\supset B_{r-|X(x_0)|}(0),
\end{equation}
and
\begin{equation}\label{eq:4-21-2}
{\rm Area}(B_{r}(X(x_0))\cap X(M)) \geq {\rm Area}(B_{r-|X(x_0)|}(0)\cap X(M))\geq \frac{C}{2}r,
\end{equation}
for $r\geq 2|X(x_0)|$.

\noindent
We next prove \eqref{eq:4-21-1} by contradiction. Assume for any $\varepsilon>0$, there exists $r\geq r_0$ such that
\begin{equation}\label{eq:4-21-3}
{\rm Area}(B_{r}(0)\cap X(M))\leq \varepsilon r.
\end{equation}
Without loss of generality, we assume $r\in \mathbb{N}$ and consider a set:
\begin{equation*}
D:=\{k\in \mathbb{N}: {\rm Area}(B_{t}(0)\cap X(M))\leq 2\varepsilon t {\rm \ for\ any \  integer}\  t  \ {\rm satisfying } \ r\leq t\leq k\}.
\end{equation*}
Next, we will show that  $k\in D$  for any integer $k$ satisfying $k\geq r$.
For $t\geq r_0$, we define a function $u$ by

\begin{equation}
 u(x)= {\begin{cases}
      \ t+2-\rho(x), & \ \ {\rm in}  \ \  B_{t+2}(0)\cap X(M)\setminus B_{t+1}(0)\cap X(M),\\
      \ 1,& \ \ {\rm in} \ \ B_{t+1}(0)\cap X(M)\setminus B_{t}(0)\cap X(M),\\
      \ \rho(x)-(t-1), & \ \ {\rm in}  \ \  B_{t}(0)\cap X(M)\setminus B_{t-1}(0)\cap X(M),\\
      \ 0, & \ \ {\rm otherwise}.\\
     \end{cases}}
  \end{equation}
Using the corollary \ref{corollary:4-27-2}, $|\nabla \rho|\leq 1$ and $t\ln t\geq -\frac{1}{e}$ for $0\leq t\leq 1$, we have
\begin{equation}\label{eq:4-21-5}
\aligned
&\ \
-\bigl(\int_M u^2d\mu\bigl)\ln \bigl\{\bigl({\rm Area}(B_{t+2}(0)\cap X(M))-{\rm Area}(B_{t-1}(0)\cap X(M))\bigl)2^{\frac{n}{2}}\bigl\}\\
&\leq C_0 \biggl({\rm Area}(B_{t+2}(0)\cap X(M))-{\rm Area}(B_{t-1}(0)\cap X(M))\biggl),
\endaligned
\end{equation}
where $C_0=2+\frac{1}{e}+\frac{\lambda^2}{4}+\frac{n}{2}\ln 2+C(n)$, $C(n)$ is the constant of the corollary \ref{corollary:4-27-2}.

\noindent For all $t\geq C_1(n,\lambda)+1$, we have from the lemma \ref{lemma:4-20-1}
\begin{equation}\label{eq:4-21-4}
\aligned
&\ \ {\rm Area}(B_{t+2}(0)\cap X(M))-{\rm Area}(B_{t-1}(0)\cap X(M))\\
&\leq c(n,\lambda)\biggl(\frac{{\rm Area}(B_{t+1}(0)\cap X(M))}{t+1}\\
&\ \ \ +\frac{{\rm Area}(B_{t}(0)\cap X(M))}{t}+\frac{{\rm Area}(B_{t-1}(0)\cap X(M))}{t-1}\biggl)\\
&\leq c(n,\lambda)\biggl(\frac{2}{t+1}+\frac{1}{t}+\frac{1}{t}(1+\frac{1}{C_1(n,\lambda)})\biggl){\rm Area}(B_{t}(0)\cap X(M))\\
&\leq C_2(n,\lambda)\frac{{\rm Area}(B_{t}(0)\cap X(M))}{t},
\endaligned
\end{equation}
where $C_2(n,\lambda)$ is constant depended only on $n$ and $\lambda$. Note that we can assume $r\geq C_1(n,\lambda)+1$
for the $r$ satisfying \eqref{eq:4-21-3}. In fact, if for any given $\varepsilon>0$, all the $r$ which satisfies \eqref{eq:4-21-3}
is bounded above by $C_1(n,\lambda)+1$, then ${\rm Area}(B_{r}(0)\cap X(M))\geq C r$ holds for
any $r>C_1(n,\lambda)+1$. Thus, we know that  $M$ has at least linear area growth.
Hence, for any $k\in D$ and any $t$ satisfying $r\leq t\leq k$,  we have
\begin{equation}
{\rm Area}(B_{t+2}(0)\cap X(M))-{\rm Area}(B_{t-1}(0)\cap X(M))\leq 2C_2(n,\lambda)\varepsilon.
\end{equation}
Since
\begin{equation}
\int_M u^2d\mu\geq {\rm Area}(B_{t+1}(0)\cap X(M))-{\rm Area}(B_{t}(0)\cap X(M)),
\end{equation}
holds, if we choose $\varepsilon$ such that $2C_2(n,\lambda)\varepsilon2^{\frac{n}{2}}<1$,
from \eqref{eq:4-21-5}, we obtain
\begin{equation}
\aligned
&\ \
({\rm Area}(B_{t+1}(0)\cap X(M))-{\rm Area}(B_{t}(0)\cap X(M)))\ln (2^{\frac{n}{2}+1}C_2(n,\lambda)\varepsilon)^{-1}\\
&\leq C_0\biggl({\rm Area}(B_{t+2}(0)\cap X(M))-{\rm Area}(B_{t-1}(0)\cap X(M))\biggl).
\endaligned
\end{equation}
Iterating from $t=r$ to $t=k$ and taking summation on $t$, we infer, from the lemma \ref{lemma:4-20-1}
\begin{equation}
\aligned
&\ \
({\rm Area}(B_{k+1}(0)\cap X(M))-{\rm Area}(B_{r}(0)\cap X(M)))\ln (2^{\frac{n}{2}+1}C_2(n,\lambda)\varepsilon)^{-1}\\
&\leq 3C_0{\rm Area}(B_{k+2}(0)\cap X(M))\leq 6C_0 {\rm Area}(B_{k+1}(0)\cap X(M)).
\endaligned
\end{equation}
Hence, we get
\begin{equation}\label{eq:4-21-6}
\aligned
&{\rm Area}(B_{k+1}(0)\cap X(M))\\
&\leq \frac{\ln (2^{\frac{n}{2}+1}C_2(n,\lambda)\varepsilon)^{-1}}
{\ln (2^{\frac{n}{2}+1}C_2(n,\lambda)\varepsilon)^{-1}-6C_0}{\rm Area}(B_{r}(0)\cap X(M))\\
&\leq \frac{\ln (2^{\frac{n}{2}+1}C_2(n,\lambda)\varepsilon)^{-1}}
{\ln (2^{\frac{n}{2}+1}C_2(n,\lambda)\varepsilon)^{-1}-6C_0}\varepsilon r.
\endaligned
\end{equation}
We can choose $\varepsilon$ small enough such that
\begin{equation}\label{eq:4-21-8}
\frac{\ln (2^{\frac{n}{2}+1}C_2(n,\lambda)\varepsilon)^{-1}}
{\ln (2^{\frac{n}{2}+1}C_2(n,\lambda)\varepsilon)^{-1}-6C_0}\leq 2.
\end{equation}
Therefore,  it follows from \eqref{eq:4-21-6} that
\begin{equation}\label{eq:4-21-7}
{\rm Area}(B_{k+1}(0)\cap X(M))\leq 2\varepsilon r,
\end{equation}
for any $k\in D$. Since $ k+1\geq r$,  we have,  from \eqref{eq:4-21-7} and the definition of $D$, that $k+1\in D$.
Thus,  by induction, we know that $D$ contains all  of  integers $k\geq r$ and
\begin{equation}
{\rm Area}(B_{k}(0)\cap X(M))\leq 2\varepsilon r,
\end{equation}
for any integer $k\geq r$. This implies that $M$ has finite volume, which contradicts with the lemma \ref{lemma:4-21-1}.
Hence, there exist  constants $C$ and $r_0$ such that ${\rm Area}(B_{r}(0)\cap X(M))\geq C r$ for $r>r_0$.
It  completes the proof of the theorem \ref{theorem:4-21-1}.

\end{proof}

\begin{remark}
The estimate in our theorem  is the best possible because the cylinders $S^{n-1}(r_0)\times \mathbb{R}$
satisfy the equality.
\end{remark}

\end {document}